\documentclass[10pt]{amsart}
\usepackage{pgf,tikz,pgfplots,color}
\pgfplotsset{compat=1.15}
\usepackage{mathrsfs} %mathscr
\usetikzlibrary{arrows}
\usepackage{fancyhdr}
\usepackage{lastpage} 

\usepackage[margin=1in]{geometry}
%\usepackage{geometry}\geometry{margin=1in}
%\usepackage[toc,page]{appendix}
%\textheight=626pt\textwidth=462pt\oddsidemargin=4pt\evensidemargin=4pt\topmargin=6pt
%Use this for printer:
%\topmargin=-70pt
%\usepackage[utf8]{inputenc}
\usepackage{hyperref}
\hypersetup{
    colorlinks=true,
    linkcolor=blue,
    citecolor=brown,
    filecolor=magenta,
    urlcolor=blue,
}
\usepackage{amsmath}
\usepackage{amsfonts}
\usepackage{amssymb}
\usepackage{amsthm}
\usepackage{setspace}
\usepackage{inputenc}
\usepackage{comment}
\usepackage[shortlabels]{enumitem}
\numberwithin{equation}{section}

\newcommand{\dist}{\operatorname{dist}}

\newcommand{\supp}{\operatorname{supp}}
\newcommand{\loc}{\mathrm{loc}}

\newtheorem{thm}{Theorem}[section]
%\newtheorem{corollary}{Corollary}[theorem] 
% restart theorem counter at every new section. 
\newtheorem{cor}[thm]{Corollary} 
\newtheorem{prop}[thm]{Proposition}
\newtheorem{lemma}[thm]{Lemma}
% the lemma enviroment still uses the same counter as the theorem environment.%

% all the environment below will be in the <definition> theoremstyle.
\theoremstyle{definition}
\newtheorem{defn}[thm]{Definition}

\newtheorem{note}[thm]{Notation}
\newtheorem{conv}[thm]{Convention}

\theoremstyle{remark}
% The environment below will be in the <remark> theoremstyle.%
\newtheorem{rem}[thm]{Remark}

\newtheorem*{ack}{Acknowledgment}
% The Remark~environment is unnumbered. %

\newcommand{\B}{\mathbb{B}} 
\newcommand{\Z}{\mathbb{Z}}
\newcommand{\R}{\mathbb{R}} 
\newcommand{\C}{\mathbb{C}}

\newcommand{\N}{\mathbb{N}}
\newcommand{\1}{\mathbf{1}}

% color shortcuts

%Below are symbol Liding use!
%\newcommand{\1}{\mathbf{1}}
\newcommand{\As}{\mathscr{A}}

\newcommand{\Co}{\mathscr{C}}
\newcommand{\Ds}{\mathscr{D}}

\newcommand{\Bs}{\mathscr{B}}

\newcommand{\Ec}{\mathcal{E}}

\newcommand{\Fs}{\mathscr{F}}

\newcommand{\Gf}{\mathfrak{G}}

\newcommand{\Kb}{\mathbb{K}}

\newcommand{\Nb}{\mathbb{N}}
\newcommand{\Pc}{\mathcal{P}}
\newcommand{\Cf}{\mathfrak{C}}
\newcommand{\Sc}{\mathcal{S}}
\newcommand{\Ss}{\mathscr{S}}
\newcommand{\Uc}{\mathcal{U}}

\newcommand{\eps}{\varepsilon}

\makeatletter
\def\@tocline#1#2#3#4#5#6#7{\relax
  \ifnum #1>\c@tocdepth % then omit
  \else
    \par \addpenalty\@secpenalty\addvspace{#2}%
    \begingroup \hyphenpenalty\@M
    \@ifempty{#4}{%
      \@tempdima\csname r@tocindent\number#1\endcsname\relax
    }{%
      \@tempdima#4\relax
    }%
    \parindent\z@ \leftskip#3\relax \advance\leftskip\@tempdima\relax
    \rightskip\@pnumwidth plus4em \parfillskip-\@pnumwidth
    #5\leavevmode\hskip-\@tempdima
      \ifcase #1
       \or\or \hskip 1em \or \hskip 2em \else \hskip 3em \fi%
      #6\nobreak\relax
    \hfill\hbox to\@pnumwidth{\@tocpagenum{#7}}\par% <---- \dotfill -> \hfill
    \nobreak
    \endgroup
  \fi}
\makeatother

	\title[Rychkov's Universal Extension Operator]{New Estimates of Rychkov's Universal Extension Operator for Lipschitz Domains and Some Applications}          

\author[]{Ziming Shi}
\address{Ziming Shi,Department of Mathematics, University of California - Irvine, Irvine, CA, 92697}  
\email{zimings3@uci.edu}
\author[]{Liding Yao} 
\address{Liding Yao, Department of Mathematics,
	The Ohio State University, Columbus, OH 43210}
\email[Corresponding Author]{yao.1015@osu.edu}

\subjclass[2020]{46E35 (primary), 42B25 and 46F10 (secondary)}

\begin{document}

\begin{abstract} 
Given a bounded Lipschitz domain $\Omega\subset\mathbb R^n$, Rychkov showed that there is a linear extension operator $\mathcal E$ for $\Omega$ which is bounded in Besov and Triebel-Lizorkin spaces. In this paper we introduce some new estimates for the extension operator $\mathcal E$ and give some applications. We prove the equivalent norms $\|f\|_{\mathscr A_{pq}^s(\Omega)}\approx\sum_{|\alpha|\le m}\|\partial^\alpha f\|_{\mathscr A_{pq}^{s-m}(\Omega)}$ for general Besov and Triebel-Lizorkin spaces. We also derive some quantitative smoothing estimates of the extended function and all its derivatives on $\overline\Omega^c$ up to the boundary. 
\end{abstract} 
\maketitle
\tableofcontents

\section{Introduction}
%Let $\Omega$ be a bounded Lipschitz domain in $\R^n$. Let $\Bs_{pq}^s$ and $\Fs_{pq}^s$ denote the Besov spaces and the Triebel-Lizorkin spaces, respectively.
In this paper we prove the following result on the equivalence of norms.
\begin{thm}[Equivalent norms in Lipschitz domains]\label{Thm::Intro::Folklore!}
	Let $\Omega\subset\R^n$ be a bounded Lipschitz domain. Let $0<p,q\le\infty$ ($p<\infty$ for the $\Fs$-cases) and $s\in\R$. For any positive integer $m$, there is a $C=C_{\Omega,p,q,s,m}>0$ such that
	\begin{gather}\label{Eqn::Intro::Folklore!::Bs}
	C^{-1}\|f\|_{\Bs_{pq}^s(\Omega)}\le\sum_{|\alpha|\le m}\|\partial^\alpha f\|_{\Bs_{pq}^{s-m}(\Omega)}\le C\|f\|_{\Bs_{pq}^s(\Omega)},\\
	\label{Eqn::Intro::Folklore!::Fs}
	C^{-1}\|f\|_{\Fs_{pq}^s(\Omega)}\le\sum_{|\alpha|\le m}\|\partial^\alpha f\|_{\Fs_{pq}^{s-m}(\Omega)}\le C\|f\|_{\Fs_{pq}^s(\Omega)}.
	\end{gather}
\end{thm}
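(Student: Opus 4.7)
The plan is to prove the two inequalities separately; the right-hand side is routine, while the left-hand side is where Theorem \ref{Thm::MainThm} \ref{Item::MainThm::AntiDev} does the real work. For the right-hand inequality, differentiation $\partial^\alpha:\Bs_{pq}^s(\R^n)\to\Bs_{pq}^{s-|\alpha|}(\R^n)$ is a standard bounded Fourier multiplier for all admissible $(p,q,s)$; since $\Bs_{pq}^s(\Omega)$ is defined by restriction, the same boundedness descends to $\Omega$, and $\Bs_{pq}^{s-|\alpha|}(\Omega)\hookrightarrow\Bs_{pq}^{s-m}(\Omega)$ for $|\alpha|\le m$. The $\Fs$-case is identical.

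For the left-hand inequality, I would first localize by a finite open cover of $\overline{\Omega}$. Choose open sets $U_0\Subset\Omega$ and $U_1,\dots,U_N$ covering $\overline\Omega$ such that, after a rigid motion, each $\Omega\cap U_i$ for $i\ge 1$ coincides with $\omega_i\cap U_i$ for some special Lipschitz domain $\omega_i$; set $\omega_0=\R^n$. Pick a subordinate partition of unity $\{\chi_i\}_{i=0}^N\subset C_c^\infty$ near $\overline\Omega$. Since pointwise multiplication by a smooth compactly supported function is bounded on $\Bs_{pq}^s$ and $\Fs_{pq}^s$ for the full index range, and since $\supp(\chi_i f)\subset U_i\cap\Omega=U_i\cap\omega_i$, it suffices to bound each $\|\chi_i f\|_{\Bs_{pq}^s(\omega_i)}$ by $\sum_{|\alpha|\le m}\|\partial^\alpha f\|_{\Bs_{pq}^{s-m}(\Omega)}$.

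For $i\ge 1$, apply the identity \eqref{Eqn::Intro::ExtAtd2} to $g:=\chi_i f\in\Bs_{pq}^s(\omega_i)$:
$$E_{\omega_i} g=\psi_0\ast(\1_{\omega_i}\cdot(\phi_0\ast g))+\sum_{|\alpha|=m}T^{\psi,\tilde\theta^\alpha,-m}_{\omega_i}(\partial^\alpha g).$$
Since $E_{\omega_i}$ is an extension, $\|g\|_{\Bs_{pq}^s(\omega_i)}\le\|E_{\omega_i}g\|_{\Bs_{pq}^s(\R^n)}$. Theorem \ref{Thm::MainThm} \ref{Item::MainThm::Bdd} with $r=-m$ yields $\|T^{\psi,\tilde\theta^\alpha,-m}_{\omega_i}(\partial^\alpha g)\|_{\Bs_{pq}^s(\R^n)}\lesssim\|\partial^\alpha g\|_{\Bs_{pq}^{s-m}(\omega_i)}$, while Proposition \ref{Prop::Prem::BddE0} controls the first summand by $\lesssim\|g\|_{\Bs_{pq}^{s-m}(\omega_i)}$. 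A Leibniz expansion $\partial^\alpha(\chi_i f)=\sum_{\beta\le\alpha}\binom{\alpha}{\beta}(\partial^{\alpha-\beta}\chi_i)\,\partial^\beta f$, together again with the smooth-multiplier property, converts these into the desired bound in terms of $\|\partial^\beta f\|_{\Bs_{pq}^{s-m}(\Omega)}$ with $|\beta|\le m$. The interior piece $i=0$ is simpler: $\chi_0 f$ extends by zero to a function in $\Bs_{pq}^s(\R^n)$ and the equivalent-norm identity on $\R^n$ is classical via Bessel potentials. The Triebel--Lizorkin case is verbatim, using the $\Fs$-statement of Theorem \ref{Thm::MainThm} \ref{Item::MainThm::Bdd}.

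The main technical hurdle is justifying the pointwise-multiplier and partition-of-unity reductions uniformly across the full range $0<p,q\le\infty$, $s\in\R$. In the quasi-Banach regime ($p<1$ or $q<1$) and for large negative $s$, these are standard results in Triebel's monograph but must be invoked with care; once they are in place, the theorem reduces to a one-step application of Theorem \ref{Thm::MainThm} \ref{Item::MainThm::AntiDev} with anti-derivative order $r=-m$, which is precisely the content packaged by \eqref{Eqn::Intro::ExtAtd2}.
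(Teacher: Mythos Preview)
Your proposal is correct and follows essentially the same route as the paper: the right-hand inequality is immediate from boundedness of $\partial^\alpha$ on $\R^n$, and the left-hand inequality is obtained by a partition of unity reduction to special Lipschitz domains, where the decomposition \eqref{Eqn::Intro::ExtAtd2} (equivalently, Theorem \ref{Thm::MainThm} \ref{Item::MainThm::AntiDev}) together with Theorem \ref{Thm::MainThm} \ref{Item::MainThm::Bdd} and Proposition \ref{Prop::Prem::BddE0} does the work. The paper organizes the special-Lipschitz step slightly differently---it first packages the identity as $\partial^\alpha\circ E_\omega=E^{\alpha,0}_\omega+\sum_{|\beta|=|\alpha|}E^{\alpha,\beta}_\omega\circ\partial^\beta$ (Proposition \ref{Prop::AntiDev::ForklorePrep}) and then invokes the $\R^n$ equivalent norm $\|E_\omega f\|_{\As_{pq}^s}\approx\sum_{|\alpha|\le m}\|\partial^\alpha E_\omega f\|_{\As_{pq}^{s-m}}$---but this is the same computation, and your direct use of \eqref{Eqn::Intro::ExtAtd2} is if anything more streamlined; one small caveat is that a rigid motion alone need not produce $\|\nabla\rho\|_{L^\infty}<1$, so in the localization step you should allow invertible affine maps $\Phi_\nu$ as the paper does.
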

At first look, this appears to be a well-known result, but surprisingly 
no complete and correct proof has appeared in the literature to the best of the authors' knowledge. One can indeed find the proof in the case of $C^\infty$ domains (\cite[Theorem~3.3.5]{TriebelTheoryOfFunctionSpacesI}) and in the case of Sobolev and H\"older spaces (\cite[Chapter V]{SteinSingularIntegral}). See also the discussions in Section \ref{Section::HisRmkEqvNorm}.

In fact, Theorem~\ref{Thm::Intro::Folklore!} was stated by Triebel in \cite[Proposition~4.21]{TriebelWavelet}. However, there is a gap in his proof, in which the following statement is claimed: 
\begin{equation}\label{Eqn::Intro::EqvNormRmk}
    \|f\|_{\As_{pq}^s(\Omega)}\approx\|\Ec f\|_{\As_{pq}^s(\R^n)}\approx\sum_{|\alpha|\le m}\|\partial^\alpha \Ec f\|_{\As_{pq}^{s-m}(\R^n)}=\sum_{|\alpha|\le m}\|\Ec\partial^\alpha  f\|_{\As_{pq}^{s-m}(\R^n)}\lesssim\sum_{|\alpha|\le m}\|\partial^\alpha  f\|_{\As_{pq}^{s-m}(\Omega)}.
\end{equation}
Here $\Ec$ is an extension operator (see \eqref{Eqn::Intro::ExtOp} below) which is bounded on $\As_{pq}^s(\Omega)\to\As_{pq}^s(\R^n)$ for all $s\in\R$ and $0<p,q\le\infty$, where $p<\infty$ if $\As=\Fs$. The problem with this argument lies in the fact that the commutativity used in the equality $\partial^\alpha\circ\Ec=\Ec\circ\partial^\alpha$ in \eqref{Eqn::Intro::EqvNormRmk} (see \cite[(4.70)]{TriebelWavelet}) cannot be achieved, see Remark~\ref{Rmk::Intro::RmkComm}.

In order to fix the proof, our idea is to modify Triebel's argument by introducing certain operators $\Ec^{\alpha,\beta}$, which have ``similar structure'' and boundedness property as $\Ec$ (see Proposition~\ref{Prop::AntiDev::ForklorePrep} and Remark~\ref{Rmk::SecCor::RmkFolklore}) such that 
\begin{equation}\label{Eqn::Intro::Eab}
    \partial^\alpha\circ\Ec=\sum_{\beta}\Ec^{\alpha,\beta}\circ\partial^\beta.
\end{equation}

For $\Ec$, we use Rychkov's extension operator \cite{ExtensionLipschitz}: for a Lipschitz domain $\Omega$ there exists an operator $\Ec=\Ec_\Omega$ that is bounded on Besov spaces $\Ec:\Bs_{pq}^s(\Omega)\to\Bs_{pq}^s(\R^n)$ for all $0<p,q\le\infty$, $s\in\R$ and on Triebel-Lizorkin spaces $\Ec:\Fs_{pq}^s(\Omega)\to\Fs_{pq}^s(\R^n)$ for all $0<p<\infty$, $0< q\le \infty$, $s\in\R$. Since $\Ec$ is one single operator which works for all such spaces, we call it an \textbf{universal extension operator} on $\Omega$. 

By carefully studying Rychkov's construction, we also derive the following quantitative smoothing estimate. %for the universal extension operator. 
% \begin{itemize}
%     \item For any tempered distribution $f\in\Ss'(\Omega)$ on domain, $\Ec f$ is always a locally $C^\infty$ smooth function on $\overline{\Omega}^c$. Moreover, the blow-up orders of the derivatives of $\Ec f$ near the boundary relate precisely to the regularity of $f$. For example, if $m\ge k$ and $f\in W^{k,p}(\Omega)$, then $\dist_\Omega^{m-k}\cdot(\nabla^m \Ec f)\in L^p(\overline\Omega^c)$. In other words, $\nabla^m \Ec f(x)$ has ``$L^p$-blow up order'' $\dist(x,\Omega)^{k-m}$ for $x\in\overline{\Omega}^c$ near the boundary.
% \end{itemize}
\begin{thm}\label{Prop::Intro::SmoothBddDom}
	Let $\Omega\subset\R^n$ be a bounded Lipschitz domain. Then Rychkov's extension operator $\Ec_\Omega$ (see \eqref{Eqn::SecCor::ConstructEc::DefEc}) satisfies the following estimate on the complement domain $\overline\Omega^c=\R^n \backslash \overline \Omega$:
	\begin{equation}\label{Eqn::SecCor::SmoothBddDom::Bdd}
	    [f\mapsto(\Ec_\Omega  f)|_{\overline{\Omega}^c}]:\Fs_{p\infty}^{s}(\Omega)\to W^{m,p}(\overline\Omega^c,\dist_\Omega^{m-s}),\quad\text{ for }1\le p\le\infty,\quad m=0,1,2,\dots,\quad\text{ and }s<m.
	\end{equation}
	
In particular, $\Ec_\Omega f\in C^\infty_\loc(\overline\Omega^c)$ for every $f\in \Ss'(\Omega)$.
\end{thm}

As a special case to Theorem~\ref{Prop::Intro::SmoothBddDom}, for $f$ in the Sobolev space $W^{k,p}(\Omega) (\subsetneq \Fs_{p\infty}^k(\Omega)$) where $1<p\le\infty$, we have $\Ec_\Omega  f\in W^{m,p}(\overline{\Omega}^c,\dist_\Omega^{m-k})$ for all $m> k$. This estimate is known if $\Ec_\Omega$ is the Stein extension operator \cite{SteinSingularIntegral} for $k\ge0$, see \cite[Theorem~6.3, Page 285]{BurenkovBook}. In our case we allow $k<0$ as well (while still requiring $m\ge 0$). The estimate is optimal in the sense that the derivatives of higher orders of $\Ec_\Omega f$ on $\overline{\Omega}^c$ have the minimal possible growth when approaching $\partial\Omega$, see \cite[Remark~6.15, Page 286]{BurenkovBook}.

% \begin{equation}\label{Eqn::Intro::SmoothFracSobtoWeiSob}
% [f\mapsto (E_\omega f)|_{\overline{\omega}^c}]:H^{s,p}(\omega) \to W^{m,p}(\overline{\omega}^c,\dist_\omega^{m-s}),\quad s<m, \quad 1<p<\infty.
% \end{equation}

% , which in some circumstances allows one to use the information of Rychokov's extension operator more efficiently. In a vague sense, our construction can be viewed as the derivatives and anti-derivatives of the extension operator $\Ec_\Omega$. Our original motivation comes from estimates of certain integral operators for the $\overline\partial$ equation in complex analysis, but we believe our method have potentially much wider use (see for example Theorem~\ref{Thm::Intro::Folklore!} and Remark~\ref{Rmk::Intro::EqvNormRmk}).

%  One important property of Rychkov's extention operator $\Ec_\Omega$ is that it is smooth outside the given domain, a feature which is not shared by typical extension operators such as the classical half-plane extension or Stein extension. One can therefore say that $\Ec_\Omega$ possesses certain ``smoothing effect'' in addition to its extension property. A main result of this paper is to give a quantitive characterization of such smoothing. More precisely, we show that the extension operator maps a large class of distributions to some weighted Sobolev space where the weights are powers of the distance-to-boundary function (see Theorem~\ref{Thm::Intro::SmoothSpeDom}). This way we have an exact control of the blow-up order of the extended function and all its derivatives as we move towards the boundary from outside the domain.
\subsection{The general results for Rychkov-type operators}

We now explain Rychkov's construction of the universal extension operator on a bounded Lipschitz domain. By partition of unity it suffices to construct the operator on special Lipschitz domains. By a special Lipschitz domain we mean an open set $\omega\subset\R^n$ of the form $\omega=\{(x',x_n):x_n>\rho(x')\}$ where $\rho$ is a Lipschitz function such that $\|\nabla\rho\|_{L^\infty}<1$. (See Definition~\ref{Defn::Intro::SpeLipDom} and Remark~\ref{Rmk::Intro::RmkSpeDom}.) 

On a special Lipschitz domain $\omega$, Rychkov uses a double-convolution construction to define a universal extension operator $E_\omega$ on $\Ss'(\omega)=\{\tilde f|_\omega:\tilde f\in\Ss'(\R^n)\}$, the space of restrictions of tempered distributions to $\omega$, as 
\begin{equation}\label{Eqn::Intro::ExtOp}
E_\omega f: =\sum_{j=0}^\infty\psi_j\ast(\1_{\omega}\cdot(\phi_j\ast f)),\qquad f\in\Ss'(\omega).
\end{equation}
Here $(\psi_j)_{j=0}^\infty$ and $(\phi_j)_{j=0}^\infty$ are carefully chosen families of Schwartz functions that depend on $\phi_0,\psi_0,\phi_1,\psi_1$ and satisfy the following properties: 
\begin{itemize}
	\item \textit{Scaling condition}: $\phi_j(x)=2^{(j-1)n}\phi_1(2^{j-1}x)$ and $\psi_j(x)=2^{(j-1)n}\psi_1(2^{j-1}x)$ for $j\ge2$.
	\item \textit{Moment condition}: $\int\phi_0=\int\psi_0=1$, $\int x^\alpha\phi_0(x)dx=\int x^\alpha\psi_0(x)dx=0$ for all multi-indices $|\alpha|>0$, and $\int x^\alpha\phi_1(x)dx=\int x^\alpha\psi_1(x)dx=0$ for all $|\alpha|\ge0$.
	\item\textit{Approximate identity}: $\sum_{j=0}^\infty\psi_j\ast\phi_j=\delta_0$ is the Dirac delta measure, which is the identity element for the convolution.
	\item \textit{Support condition}: $\phi_0,\psi_0,\phi_1,\psi_1$ are all supported in the negative cone $-\Kb:=\{(x',x_n):x_n<-|x'|\}$.
\end{itemize}

The first three conditions show that $(\psi_j\ast\phi_j)_{j=0}^\infty$ induces a Littlewood-Paley decomposition on $\Ss'(\R^n)$.
The support condition guarantees $E_\omega$ is well-defined for functions defined only on $\omega$, since we have $\omega+\Kb\subseteq\omega$. That $E_\omega$ is an extension operator follows from the approximate identity and support condition:  $E_\omega f|_\omega=\sum_{j=0}^\infty\psi_j\ast\phi_j\ast f=f$, whenever the summation converges in the sense of distributions.

The smoothing effect of $E_\omega$ can be seen as follows: for any $f\in\Ss'(\omega)$, $E_\omega f$ is a smooth function on $\overline\omega^c$. This is because when $x_0\in\overline\omega^c$, the summand $\psi_j\ast(\1_{\omega}\cdot(\phi_j\ast f))(x)$ and all its derivatives decay rapidly in a neighborhood $x_0$.

\begin{rem}We note that there is no extension operator defined on $\Ds'(\omega)$, since a distribution $f\in\Ds'(\omega)$ admits an extension to $\R^n$ only when it has finite order (as a distribution) near $\partial\omega$. Cf. \cite[Page 4, (3)]{ExtensionLipschitz}.  

     % When $(\phi_j,\psi_j)_{j=0}^\infty$ are compactly supported but only have finite moments vanishing, for general $f\in\Ss'(\omega)$, each term $\psi_j\ast(\1_\omega\cdot(\phi_j\ast f))$ is a well-defined function in $C_\loc^\infty(\R^n)$, but the summation in \eqref{Eqn::Intro::ExtOp} may not converge in $\Ds'(\R^n)$.
\end{rem}
To prove Theorems \ref{Thm::Intro::Folklore!} and \ref{Prop::Intro::SmoothBddDom},  we need a definition that generalizes the above construction of $(\phi_j,\psi_j)_j$. 
\begin{defn}\label{Defn::Intro::GenRes}
	A \textit{generalized Littlewood-Paley family} $\eta=(\eta_j)_{j=1}^\infty$ is a collection of Schwartz functions depending only on $\eta_1$, such that
	\begin{itemize}
		\item $\eta_1\in\Ss(\R^n)$ and all its moments vanish. That is, $\int x^\alpha \eta_1(x)dx=0$ for all multi-indices $\alpha$.
		\item $\eta_j(x)=2^{(j-1)n}\eta_1(2^{j-1}x)$ for $j\ge2$, $x\in\R^n$.
	\end{itemize}
	We denote by $\Gf=\Gf(\R^n)$ the set of all such families $\eta$.
	
	For an open set $V\subseteq\R^n$, we define $\Gf(V)$ as the set of all $(\eta_j)_{j=1}^\infty\in\Gf$ such that $\supp\eta_j\subset V$ for all $j\ge1$.
\end{defn}
Clearly, the map $\eta=(\eta_j)_{j=1}^\infty\mapsto\eta_1$ is a bijection between $\Gf$ and $\dot\Ss(\R^n)$, the space of Schwartz functions all of whose moments vanish (see Definition~\ref{Defn::Intro::S0Space}).

The main component of our proof relies on the following construction of a family of Rychkov-type operators \eqref{Eqn::Intro::ExtOp}:
\begin{thm}\label{Thm::MainThm}
Let $\omega\subset\R^n$ be a special Lipschitz domain. Denote $-\Kb=\{(x',x_n)\in\R^n:x_n<-|x'|\}$. 
Let $\eta=(\eta_j)_{j=1}^\infty,\theta=(\theta_j)_{j=1}^\infty\in\Gf(-\Kb)$. 
	For $r\in\R$, we define $T^{\eta,\theta,r}_\omega : \Ss'(\omega)\to\Ss'(\R^n)$ by 
	\begin{equation}\label{Eqn::MainThm::DefT}
	T^{\eta,\theta,r}_\omega f:=\sum_{j=1}^\infty 2^{jr}\eta_j\ast(\1_{\omega}\cdot(\theta_j\ast f)),\quad f\in\Ss'(\omega).
	\end{equation}
	Then $T^{\eta,\theta,r}_\omega$ is a well-defined linear operator in the sense that the sum in \eqref{Eqn::MainThm::DefT} converges in $\Ss'(\R^n)$. Furthermore,
	\begin{enumerate}[(i)]
		\item\label{Item::MainThm::Bdd}{\normalfont(Boundedness)} $T^{\eta,\theta,r}_\omega$ is bounded in Besov spaces $T^{\eta,\theta,r}_\omega:\Bs_{pq}^s(\omega)\to\Bs_{pq}^{s-r}(\R^n)$ for all $0< p,q\le\infty$, $s\in\R$ and in Triebel-Lizorkin spaces $T^{\eta,\theta,r}_\omega:\Fs_{pq}^s(\omega)\to\Fs_{pq}^{s-r}(\R^n)$ for all $0< p<\infty$, $0< q\le\infty$, $s\in\R$.

		\item\label{Item::MainThm::AntiDev}{\normalfont(Anti-derivatives)} For any positive integer $m$, we can find some $\tilde \eta^\alpha=(\tilde\eta^\alpha_j)_{j=1}^\infty,\tilde \theta^\alpha=(\tilde\theta^\alpha_j)_{j=1}^\infty\in\Gf(-\Kb)$ for multi-indices $|\alpha|=m$ such that
		\begin{equation}\label{Eqn::MainThm::AtDChar}
		\eta_j=2^{-jm}\sum_{|\alpha|=m}\partial^\alpha\tilde\eta^\alpha_j,\quad\theta_j=2^{-jm}\sum_{|\alpha|=m}\partial^\alpha\tilde\theta^\alpha_j,\quad j=1,2,3,\dots.
		\end{equation} 		 
		\begin{equation*}
		T^{\eta,\theta,r}_\omega=\sum_{|\alpha|=m}\partial^\alpha\circ T^{\tilde \eta^\alpha,\theta,r-m}_\omega=\sum_{|\alpha|=m}T^{\eta,\tilde \theta^\alpha,r-m}_\omega\circ\partial^\alpha.
		\end{equation*}
		% {\normalfont Note that the boundedness for $T^{\tilde\eta^\alpha,\theta,r-m}_\omega$ and $T^{\eta,\tilde\theta^\alpha,r-m}_\omega$ follows from \ref{Item::MainThm::Bdd}.}
		\item\label{Item::MainThm::Smoothing}{\normalfont(Quantitive smoothing)} By restricting the target functions to $\overline\omega^c$, $T^{\eta,\theta,r}_\omega$ satisfies the estimate 
	\begin{equation}\label{Eqn::MainThm::SmooBdd}
		\big[f\mapsto (T^{\eta,\theta,r}_\omega f)|_{\overline\omega^c}\big]:\Fs_{p\infty}^{s}(\omega)\to W^{m,p}(\overline\omega^c,\dist_\omega^{m+r-s}),
	\end{equation}
		for all\footnote{ Here when $p=\infty$, $\Fs_{\infty\infty}^s=\Bs_{\infty\infty}^s$ are also Besov spaces. See Remark~\ref{Rmk::Prem::BsFsRmk}~\ref{Item::Prem::BsFsRmk::InftyInfty}} $1\le p\le\infty$, $m=0,1,2,\dots$ and $s< m+r$.  
		Here $\dist_\omega(x)=\dist(x, \omega)$.
	\end{enumerate}
\end{thm}

In \ref{Item::MainThm::Smoothing} we use the notation of weighted Sobolev spaces from \cite[Section 7.2.3]{TriebelTheoryOfFunctionSpacesI}. See \eqref{Eqn::Intro::WeiSobo} below.

\begin{rem}
    The operators $T^{\eta,\theta,r}_\omega$ from \eqref{Eqn::MainThm::DefT} essentially generalize $E_\omega$ in \eqref{Eqn::Intro::ExtOp}, since we can write $E_\omega$ as 
\begin{equation}\label{Eqn::Intro::EfromT}
E_\omega f= \psi_0\ast(\1_\omega\cdot(\phi_0\ast f))+T^{\psi,\phi,0}_\omega f,\quad\text{where }\psi=(\psi_j)_{j=1}^\infty\text{ and }\phi=(\phi_j)_{j=1}^\infty.
\end{equation}
The operator $f\mapsto\psi_0\ast(\1_\omega\cdot(\phi_0\ast f))$ takes tempered distributions to smooth functions and is bounded on all good function spaces we consider; see Proposition~\ref{Prop::Prem::BddE0} \ref{Item::Prem::BddE0::F0}.
\end{rem}

We now explain how Theorem~\ref{Thm::MainThm} is used to prove Theorem~\ref{Thm::Intro::Folklore!}. % and  gives rise to the operator $\Ec^{\alpha, \beta}$ in \eqref{Eqn::Intro::Eab} and the equivalent norm property.

% The result is known for bounded $C^\infty$-domains; see for example \cite[Theorem~3.3.5]{TriebelTheoryOfFunctionSpacesI}. Furthermore, 
% for general bounded Lipschitz domain Theorem~\ref{Thm::Intro::Folklore!} is known in the special cases of Sobolev spaces $W^{k,p}=\Fs_{p2}^k$ for integer $k\ge m$ and H\"older-Zygmund spaces $\Co^s=\Bs_{\infty\infty}^s$ for real number $s>m$. The proof can be done using Stein's extension operator \cite[Chapter V]{SteinSingularIntegral}. See also the discussions in Section \ref{Section::HisRmkEqvNorm}.

Indeed, applying Theorem~\ref{Thm::MainThm} \ref{Item::MainThm::AntiDev} with $\eta=\psi$ and $\theta=\phi$, we can write $E_\omega$ in the following way:
\begin{align}\label{Eqn::Intro::ExtAtd2}
E_\omega f&=\psi_0\ast(\1_\omega\cdot(\phi_0\ast f))+\sum_{|\beta|=m} T^{\psi,\tilde\theta^\beta,-m}_\omega (\partial^\beta f),\quad f\in\Ss'(\omega),\quad m=1,2,3,\dots.
% \\
% \label{Eqn::Intro::ExtAtd1}
% &=\psi_0\ast(\1_\omega\cdot(\phi_0\ast f))+\sum_{|\beta|=m}\partial^\beta (T^{\tilde\eta^\beta,\phi,-m}_\omega f).
\end{align}

By Theorem~\ref{Thm::MainThm} \ref{Item::MainThm::Bdd},  $T^{\psi,\tilde\theta^\beta,-m}_\omega$ gains $m$-derivatives in Besov and Triebel-Lizorkin spaces. 
% \begin{equation*}
%     \|E_\omega f\|_{\As_{pq}^s(\omega)}\le\|\psi_0\ast(\1_\omega\cdot(\phi_0\ast f))\|_{\As_{pq}^s(\R^n)}+\sum_{|\beta|=m}\|T^{\psi,\tilde\theta^\beta,-m}_\omega (\partial^\beta f)\|_{\As_{pq}^s(\R^n)}\lesssim\|f\|_{\As_{pq}^{s-m}(\omega)}+\sum_{|\beta|=m}\|\partial^\beta f\|_{\As_{pq}^{s-m}(\omega)}.
% \end{equation*}
Taking $\Ec^{\alpha,0}f:=\psi_0\ast(\1_\omega\cdot(\phi_0\ast f))$ and $\Ec^{\alpha,\beta}:=\partial^\alpha\circ T^{\psi,\tilde\theta^\beta,-m}_\omega$ for $|\beta|=|\alpha|$, we get \eqref{Eqn::Intro::Eab} in the case $\Omega$ is a special Lipschitz domain.
This proves \eqref{Eqn::Intro::Folklore!::Bs} and \eqref{Eqn::Intro::Folklore!::Fs} for special Lipschitz domains (Proposition~\ref{Prop::AntiDev::ForklorePrep}). The proof for bounded Lipschitz domains then follows from standard partition of unity argument, see Section \ref{Section::SecCor::AtdBddDom}.

\subsection{An application to several complex variables}  Applying Theorem~\ref{Thm::MainThm} \ref{Item::MainThm::AntiDev} in a different way, we have the following formula analogous to \eqref{Eqn::Intro::ExtAtd2}:
\begin{equation}\label{Eqn::Intro::ExtAtd1}
  \tilde f=\psi_0\ast(\1_\omega\cdot(\phi_0\ast\tilde f))+\sum_{|\beta|=m}\partial^\beta\Big(\sum_{j=1}^\infty\tilde\eta^\beta_j\ast(\1_\omega\cdot(\phi_j\ast\tilde f))\Big),\quad\tilde  f\in\Ss'(\R^n),\quad m=1,2,3,\dots.
\end{equation}

If  $\tilde f|_\omega \equiv 0$ then $\psi_0\ast(\1_\omega\cdot(\phi_0\ast\tilde f))|_\omega=\tilde\eta^\beta_j\ast(\1_\omega\cdot(\phi_j\ast\tilde f))|_\omega \equiv 0$ as well. This leads to the following:

\begin{prop}[Anti-derivatives with support constraint]\label{Prop::Intro::AntiDevBddDom}
	Let $\Omega\subset\R^n$ be a bounded Lipschitz domain. Then for any positive integer $m$, there exist linear operators $\Sc^{m,\alpha}_\Omega:\Ss'(\R^n)\to\Ss'(\R^n)$, $|\alpha|\le m$, such that
	\begin{enumerate}[(i)]
		\item\label{Item::SecCor::AntiDevBddDom::Bdd} $\Sc^{m,\alpha}_\Omega:\Bs_{pq}^s(\R^n)\to\Bs_{pq}^{s+m}(\R^n)$ for all $0<p,q\le\infty$, $s\in\R$ and $\Sc^{m,\alpha}_\Omega:\Fs_{pq}^s(\R^n)\to\Fs_{pq}^{s+m}(\R^n)$ for all $0<p<\infty$, $0<q\le\infty$, $s\in\R$.
		\item\label{Item::SecCor::AntiDevBddDom::Sum} $g=\sum_{|\alpha|\le m}\partial^\alpha(\Sc^{m,\alpha}_\Omega g)$ for all $g\in\Ss'(\R^n)$.
		\item\label{Item::SecCor::AntiDevBddDom::Vanish} If $g\in\Ss'(\R^n)$ satisfies $g|_\Omega \equiv 0$, then $(\Sc^{m,\alpha}_\Omega g)|_\Omega \equiv 0$ for all $|\alpha|\le m$.
	\end{enumerate}
\end{prop}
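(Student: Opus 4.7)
Write every $g\in \Ss'(\R^n)$ as $g = E_\Omega(g|_\Omega) + h$ with $h:=g-E_\Omega(g|_\Omega)$, where $E_\Omega$ denotes the bounded-Lipschitz Rychkov extension (built from the special-Lipschitz operators $E_{\omega_i}$ by a standard partition-of-unity argument on $\partial\Omega$). Since $E_\Omega f|_\Omega = f$, we have $h|_\Omega = 0$, so $\supp h\subseteq\Omega^c$. For the first summand, apply the bounded-Lipschitz analog of \eqref{Eqn::Intro::ExtAtd1}---derived by the same partition-of-unity procedure from Theorem~\ref{Thm::MainThm}\ref{Item::MainThm::AntiDev}---to obtain linear operators $R_\Omega$ and $S^{m,\alpha}_\Omega$ (for $|\alpha|=m$) with
\[
E_\Omega f = R_\Omega f + \sum_{|\alpha|=m}\partial^\alpha S^{m,\alpha}_\Omega f,
\]
where $R_\Omega$ has range in $C^\infty(\R^n)$ (hence is bounded on every relevant Besov/Triebel-Lizorkin space) and each $S^{m,\alpha}_\Omega$ gains $m$ derivatives.

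\textbf{Support-preserving anti-derivative of $h$.} It remains to construct operators $U^{m,\alpha}_\Omega\colon\Ss'(\R^n)\to\Ss'(\R^n)$ for $|\alpha|\le m$ satisfying: (a) each gains $m$ derivatives on Besov/Triebel-Lizorkin spaces; (b) whenever $\supp h\subseteq\Omega^c$, $U^{m,0}_\Omega h + \sum_{|\alpha|=m}\partial^\alpha U^{m,\alpha}_\Omega h = h$; and (c) under the same hypothesis $\supp U^{m,\alpha}_\Omega h\subseteq\Omega^c$. These come from running Rychkov's construction with the cone direction reversed. In each boundary chart $B_i$ where, after a rigid motion, $\overline\Omega^c\cap B_i = \omega_i\cap B_i$ for a special Lipschitz $\omega_i=\{x_n>\rho_i(x')\}$, use Schwartz kernel families $\tilde\phi^i=(\tilde\phi^i_j)_{j\ge1},\tilde\psi^i=(\tilde\psi^i_j)_{j\ge1}\in\Gf(\Kb)$---where $\Kb=\{y_n>|y'|\}$ is the outward cone---together with $j=0$ pieces $\tilde\phi^i_0,\tilde\psi^i_0$ supported in $\Kb$, satisfying the moment, scaling, and approximate-identity conditions of \eqref{Eqn::Intro::ExtOp}; these are obtained from Rychkov's original kernels by the reflection $x_n\mapsto-x_n$. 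Because $\overline{\omega_i}+\Kb\subseteq\overline{\omega_i}$, each convolution $\tilde\phi^i_j*h$ and then $\tilde{\tilde\eta}^{i,\alpha}_j*(\1_{\omega_i}\tilde\phi^i_j*h)$ preserves support in $\overline{\omega_i}$, and the anti-derivative identity of Theorem~\ref{Thm::MainThm}\ref{Item::MainThm::AntiDev} (in flipped form) yields the local operators $U^{m,\alpha}_i$. Glue them via a partition of unity subordinate to $\{B_i\}$, together with analogous operators associated to auxiliary half-space special Lipschitz charts covering the part of $\Omega^c$ at positive distance from $\partial\Omega$.

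\textbf{Assembly and main obstacle.} Set
\[
\Sc^{m,0}_\Omega g := R_\Omega(g|_\Omega) + U^{m,0}_\Omega h,\qquad \Sc^{m,\alpha}_\Omega g := S^{m,\alpha}_\Omega(g|_\Omega) + U^{m,\alpha}_\Omega h\ \text{for}\ |\alpha|=m,
\]
with $\Sc^{m,\alpha}_\Omega g := 0$ for $0<|\alpha|<m$ and $h=g-E_\Omega(g|_\Omega)$. Property \ref{Item::SecCor::AntiDevBddDom::Bdd} follows from Theorem~\ref{Thm::MainThm}\ref{Item::MainThm::Bdd} and its flipped-cone analog, combined with boundedness of $E_\Omega$ and of the restriction map. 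Property \ref{Item::SecCor::AntiDevBddDom::Sum} is immediate on adding the two decompositions: $\sum_{|\alpha|\le m}\partial^\alpha\Sc^{m,\alpha}_\Omega g = E_\Omega(g|_\Omega) + h = g$. Property \ref{Item::SecCor::AntiDevBddDom::Vanish} holds because $g|_\Omega=0$ forces $R_\Omega(g|_\Omega)=S^{m,\alpha}_\Omega(g|_\Omega)=0$ and makes $h=g$ supported in $\Omega^c$, whence by (c) each $\Sc^{m,\alpha}_\Omega g=U^{m,\alpha}_\Omega g$ is supported in $\Omega^c$. The principal technical obstacle is verifying (c) after the partition-of-unity gluing: the various local cones and kernel supports must be chosen compatibly (for instance all kernels compactly supported in sufficiently small neighborhoods of the origin, using compactness of $\partial\Omega$) so that no convolution output from one chart leaks across $\partial\Omega$ into $\Omega$ via a neighboring chart.
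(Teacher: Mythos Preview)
Your strategy is more elaborate than necessary, and the step you flag as the ``principal technical obstacle'' is in fact where the entire content of the proof lies---and you have not resolved it. Two concrete problems:

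\emph{First}, the proposed fix of choosing ``all kernels compactly supported in sufficiently small neighborhoods of the origin'' cannot work. A nonzero compactly supported Schwartz function cannot have all moments vanishing: its Fourier transform would be entire and vanish to infinite order at $0$, hence identically zero. Since boundedness on $\As_{pq}^s$ for \emph{all} $s\in\R$ forces you to use families in $\Gf$, the kernels necessarily have unbounded support, and the leakage issue is genuine.

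\emph{Second}, the split $g=E_\Omega(g|_\Omega)+h$ is a detour: constructing your $U^{m,\alpha}_\Omega$ on the unbounded Lipschitz domain $\Omega^c$ is essentially the original problem again. The paper bypasses this entirely. It first records (Lemma~\ref{Lem::SecCor::AntiDevSpeDom}) that the convolution operators $S_0,S_\alpha$ built from a $\Kb$-pair and Theorem~\ref{Thm::MainThm}\ref{Item::MainThm::AntiDev} with $\Omega=\R^n$ already satisfy $f=S_0f+\sum_{|\alpha|=m}\partial^\alpha S_\alpha f$ for \emph{every} $f\in\Ss'(\R^n)$, and---because their kernels lie in $-\Kb$ and $\omega^c-\Kb\subseteq\omega^c$---preserve support in $\omega^c$ for any special Lipschitz $\omega$. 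One then writes $f=\zeta_0\chi_0 f+\zeta_\infty\chi_\infty f+\sum_\nu\chi_\nu^2 f$ with auxiliary cutoffs $\zeta_0,\zeta_\infty$, applies $S_0,S_\alpha$ to each piece (after composing with the affine charts $\Phi_\nu$), and uses the Leibniz rule to pull the derivatives outside the cutoffs. Property~\ref{Item::SecCor::AntiDevBddDom::Vanish} follows because in each chart the convolution output remains in $\overline{\omega_\nu}^c$, and the subsequent multiplication by $\partial^\gamma\chi_\nu$ confines it to $U_\nu\cap\overline{\omega_\nu}^c\subseteq\overline\Omega^c$; the $\chi_\infty$ piece is handled by $\supp\zeta_\infty\subset\overline\Omega^c$. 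No extension operator appears.

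In short, the correct resolution of your obstacle is not small-support kernels but \emph{post-multiplication by cutoffs together with a Leibniz expansion}---precisely the computation you would need to make your $U^{m,\alpha}_\Omega$ rigorous, at which point the $E_\Omega$ half of your argument becomes redundant.
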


In a recent paper \cite{ShiYaoCk}, the authors applied Proposition~ \ref{Prop::Intro::AntiDevBddDom} to several complex variables and obtained a new Sobolev estimate for the $\overline\partial$ equation on strongly pseudoconvex domains. For the equation $\overline\partial u = \varphi$, our solution operator contains an integral of the form
\begin{equation}\label{Eqn::Intro::SCVInt}
    \int_{\Uc\backslash\overline\Omega} K(z, \cdot) \wedge[\overline\partial, \Ec_\Omega] \varphi, \quad z\in\Omega,
\end{equation}
where $\Uc$ is some small neighborhood of $\overline\Omega$.

To estimate the integral, the differential form $\varphi$ cannot be too low in regularity, in particular we need $\varphi$ to be in $H^{s,p}$, where $s$ cannot be negative. In order to estimate the solution when $\varphi \in H^{s,p}$, for $s<0$, we apply Proposition~\ref{Prop::Intro::AntiDevBddDom} to the commutator so that the integral becomes
\begin{equation*}\label{Eqn::Intro::SCVInt2}
    \sum_{|\alpha|\le m}\int_{\Uc \backslash\overline{\Omega}} K(z, \cdot) \wedge
  \partial^{\alpha} \Sc^{m, \alpha}_\Omega([\overline\partial, \Ec_\Omega] \varphi),\quad z\in\Omega.
\end{equation*}
Using ``integration by parts'', we can move the derivatives to the kernel:
\begin{equation*}  
  \sum_{|\alpha|\le m}(-1)^{|\alpha|} \int_{\Uc\backslash\overline\Omega} \partial^{\alpha} (K (z, \cdot)) \wedge
  \Sc^{m, \alpha}_\Omega([\overline\partial, \Ec_\Omega] \varphi),\quad z\in\Omega.
\end{equation*}

Now, the commutator $[\overline\partial, \Ec_\Omega] \varphi = \overline\partial (\Ec_\Omega \varphi) - \Ec_\Omega (\overline\partial \varphi)$ has an important property that it vanishes identically inside $\Omega$. By Proposition~\ref{Prop::Intro::AntiDevBddDom} with $m$ chosen to be sufficiently large, we can make $ \Sc^{m, \alpha}_\Omega([\overline\partial, \Ec_\Omega] \varphi) \in H^{s,p}(\Omega)$ for $s>0$, while $\Sc^{m, \alpha}_\Omega([\overline\partial, \Ec_\Omega] \varphi)$ still vanishes inside $\Omega$. As a result, $\Sc^{m, \alpha}_\Omega([\overline\partial, \Ec_\Omega] \varphi)(\zeta)$ will contribute some vanishing order as $\zeta\in\overline{\Omega}^c$ approaches the boundary, which cancels with the singularity of the kernel $\partial^{\alpha}_\zeta K(z,\zeta)$ and enables us derive the sharp estimate. 

\begin{rem}\label{Rmk::Intro::RmkComm}
 On any bounded strongly pseudoconvex domain $\Omega\subset\C^n$, for $1\le q\le n-1$ one can construct a $\overline{\partial}$-closed continuous $(0,q)$-form $\varphi$ such that any solution $u$ to the equation $\overline{\partial}u=\varphi$ is not in
 $\Co^{\frac12+\eps}(\Omega)$ for all $\eps>0$ (see e.g. \cite[Section V.2.5]{RangeSCV}). This is one way to see that $[\overline{\partial},\Ec_\Omega] = \overline\partial \circ \Ec_\Omega- \Ec_\Omega \circ \overline\partial$ in \eqref{Eqn::Intro::SCVInt} cannot be the zero operator, since otherwise the solution operator can gain one derivative (see e.g. \cite[Section 2]{Gong}). 
\end{rem}

\subsection{Terminology and organization}
We denote by $\N=\{0,1,2,\dots\}$ the set of non-negative integers, and $\Z_+=\{1,2,3,\dots\}$ the set of positive integers.

Let $E\subset\R^n$ be an arbitrary set; we use $\1_E:\R^n\to\{0,1\}$ for the characteristic function for $E$.

We will use the notation $x \lesssim y$ to mean that $x \leq Cy$ where $C$ is a constant independent of $x,y$, and $x \approx y$ for ``$x \lesssim y$ and $y \lesssim x$''. 

We write $\Ss(\R^n)$ as the space of Schwartz functions, and $\Ss'(\R^n)$ as the space of tempered distributions. For an arbitrary open set $\Omega\subseteq\R^n$, we denote by $\Ss'(\Omega)=\{\tilde f|_\Omega:\tilde f\in\Ss'(\R^n)\}\subset\Ds'(\Omega)$ the space of restrictions of tempered distributions on $\R^n$ to $\Omega$. There is also an intrinsic definition for $\Ss'(\Omega)$, see \cite[(3.1) and Proposition~3.1]{ExtensionLipschitz}.

\begin{defn}\label{Defn::Intro::S0Space}
	We use $\dot\Ss(\R^n)$ to denote the space\footnote{This spaces is also denoted by $Z(\R^n)$, see for example \cite[Section 5.1.2]{TriebelTheoryOfFunctionSpacesI}.} of all infinite order moment vanishing Schwartz functions. That is, all $f\in\Ss(\R^n)$ such that $\int x^\alpha f(x)dx=0$ for all $\alpha\in\N^n$, or equivalently, all $f\in\Ss(\R^n)$ such that its Fourier transform satisfies $\partial^\alpha\widehat f(0)=0$ for all $\alpha\in\Nb^n$. 
\end{defn}

Note that $\dot\Ss(\R^n)$ is closed under convolutions.

\begin{note}\label{Note::Intro::CanCone}
	In $\R^n$ we use the $x_n$-directional cone $\Kb:=\{(x',x_n):x_n>|x'|\}$ and its reflection $$-\Kb:=\{(x',x_n):x_n<-|x'|\}.$$
\end{note}
\begin{defn}\label{Defn::Intro::SpeLipDom}
	A \emph{special Lipschitz domain} $\omega\subset\R^n$ is a domain of the form $\omega=\{(x',x_n):x_n>\rho(x')\}$ where $\rho:\R^{n-1}\to\R$ is a Lipschitz function with $\|\nabla\rho\|_{L^\infty}<1$. 
\end{defn}
\begin{rem}\label{Rmk::Intro::RmkSpeDom}
	In many literature, for example \cite[Section 1.11.4 (1.322) p. 63]{TriebelTheoryOfFunctionSpacesIII}, the definition for a special Lipschitz domain only requires $\rho$ to be a Lipschitz function. In other words,  $\|\nabla\rho\|_{L^\infty}$ is finite but can be arbitrary large.
	By taking invertible linear transformations we can make $\nabla\rho$ small in the new coordinates. 
	
  The assumption $\|\nabla\rho\|_{L^\infty}<1$ is necessary in order to guarantee the property $\omega+\Kb\subseteq\omega$, which is convenient in constructing the extension operator.
\end{rem}

We define a \emph{bounded Lipschitz domain} to be an open set $\Omega\subset\R^n$ such that the boundary $\partial\Omega$ is locally a Lipschitz graph. See also \cite[Definition 1.103(ii)]{TriebelTheoryOfFunctionSpacesIII}. 

\begin{conv}\label{Conv::Intro::KDyaPair}
    A \textit{$\Kb$-Littlewood-Paley pair} (abbreviated as $\Kb$-pair) is a collection of Schwartz functions $(\phi_j,\psi_j)_{j=0}^\infty$ depending only on $\phi_0,\psi_0,\phi_1,\psi_1$, such that
    \begin{itemize}
        \item $\supp\phi_j,\supp\psi_j\subset -\Kb\cap\{x_n<-2^{-j}\}$ for all $j\ge0$;
        \item $\phi=(\phi_j)_{j=1}^\infty$ and $\psi=(\psi_j)_{j=1}^\infty\in \Gf(-\Kb)$;
        \item $\sum_{j=0}^\infty\phi_j=\sum_{j=0}^\infty\psi_j\ast\phi_j=\delta_0$ is the Dirac delta measure at $0\in\R^n$.
    \end{itemize} 
\end{conv}

Such $(\phi_j,\psi_j)_j$ exists due to \cite[Theorem~4.1(a)]{ExtensionLipschitz}. Here the condition $\supp\phi_j,\supp\psi_j\subset\{x_n<-2^{-j}\}$ can be obtained by scaling (see Lemma~\ref{Lem::Smooth::ScalingRmk}).

\medskip
Let $\Omega\subseteq\R^n$ be an arbitrary open set. Let $\varphi:\Omega\to\R^+$ be a positive continuous function, we define
\begin{equation}\label{Eqn::Intro::WeiSobo}
\begin{gathered}
W^{m,p}(\Omega,\varphi):=\{f\in W^{m,p}_\loc(\Omega):\|f\|_{W^{m,p}(\Omega,\varphi)}<\infty\},
\\
\|f\|_{W^{m,p}(\Omega,\varphi)}:=\bigg(\sum_{|\alpha|\le m}\int_\Omega |\varphi\partial^\alpha f|^p\bigg)^\frac1p \quad 1 \le p < \infty;\quad
\|f\|_{W^{m,\infty}(\Omega,\varphi)}:=\sup\limits_{|\alpha|\le m}\|\varphi\partial^\alpha f\|_{L^\infty(\Omega)}.
\end{gathered}
\end{equation}

Let $0<p,q<\infty$. For a vector-valued function $g=(g_j)_{j\in\Z}:\Omega\to\ell^q(\Z)$, we adopt the following notations for (quasi-)norms $\|\cdot\|_{\ell^q(L^p)}=\|\cdot\|_{\ell^q(\Z;L^p(\Omega))}$ and $\|\cdot\|_{L^p(\ell^q)}=\|\cdot\|_{L^p(\Omega;\ell^q(\Z))}$ as
\begin{equation*}
\|g\|_{\ell^q(L^p)} :=\bigg(\sum_{j\in\Z}\Big(\int_\Omega |g_j(x)|^pdx\Big)^\frac qp\bigg)^\frac1q,\quad \|g\|_{L^p(\ell^q)}:= \bigg(\int_\Omega\Big(\sum_{j\in\Z} |g_j(x)|^q\Big)^\frac pqdx\bigg)^\frac1p.
\end{equation*}
For $p=\infty$ or $q=\infty$ we use the usual modification.

When we are dealing with the boundedness of Besov spaces and Triebel-Lizorkin spaces, we use the following often-used notion of admissible indices:
\begin{conv}\label{Conv::Intro::AdmissibleIndex}
	Let $U,V\subseteq\R^n$ and $s,t\in\R$. We say an operator $T$ is bounded linear $T:\As_{pq}^s(U)\to\As_{pq}^t(V)$ for all \textit{admissible indices} $(p,q)$, if $T:\Bs_{pq}^s(U)\to\Bs_{pq}^t(V)$ is bounded linear for all $0<p,q\le\infty$, and $T:\Fs_{pq}^s(U)\to\Fs_{pq}^t(V)$ is bounded linear for all $0<p<\infty $, $0<q\le\infty$, and $p=q=\infty$. We shall always use $\As$ as either the Besov class $\Bs$ or the Triebel-Lizorkin class $\Fs$.
\end{conv}

% Let $\Omega\subseteq\R^n$ be an arbitrary open set. Suppose $\eta_0,\theta_0\in\Ss(\R^n)$ and  $\eta=(\eta_j)_{j=1}^\infty,\theta=(\theta_j)_{j=1}^\infty\in\Gf$. We define the operators $\tilde T^{\eta,\theta,r}_\Omega$ and $\tilde F^{\eta_0,\theta_0}_\Omega$ (see \eqref{Eqn::Prem::TildeTBdd::DefTildeT} and \eqref{Eqn::Prem::BddTildeE0}) by 
% \begin{equation*}
% \textstyle\tilde T^{\eta,\theta,r}_\Omega f :=\sum_{j=1}^\infty2^{jr}\eta_j\ast(\1_\Omega\cdot(\theta_j\ast f)),\quad\tilde F^{\eta_0,\theta_0}_\Omega f:=\eta_0\ast(\1_\Omega\cdot(\theta_0\ast f)),\quad \text{for }f\in\Ss'(\R^n).
% \end{equation*}

% Let $\omega\subset\R^n$ be a special Lipschitz domain, and assume that $\eta_j,\theta_j$, $j\ge0 \in \Ss(\R^n)$ are all supported in $-\Kb$ in addition to $\eta, \theta \in \Gf$. We define the operators $ T^{\eta,\theta,r}_\omega$ and $F^{\eta_0,\theta_0}_\omega$ (\eqref{Eqn::MainThm::DefT} and \eqref{Eqn::Prem::BddE0}) by 
% \begin{equation*}
% \textstyle T^{\eta,\theta,r}_\omega g
% :=\sum_{j=1}^\infty2^{jr}\eta_j\ast(\1_\omega\cdot(\theta_j\ast g)),\quad F^{\eta_0,\theta_0}_\omega g
% :=\eta_0\ast(\1_\omega\cdot(\theta_0\ast g)),\quad\text{for } g\in\Ss'(\omega).
% \end{equation*} 

\medskip
The paper is organized as follows.

In Section \ref{Section::Prelim}, we review some basic properties of Besov and Triebel-Lizorkin spaces via Littlewood-Paley decomposition, and we prove Theorem~\ref{Thm::MainThm} \ref{Item::MainThm::Bdd} using similar arguments as in \cite{RychkovThmofBui,ExtensionLipschitz}.
In Section \ref{Section::AntiDev}, we prove Theorem~\ref{Thm::MainThm} \ref{Item::MainThm::AntiDev} using an argument of homogeneous Littlewood-Paley decomposition. 
In Section \ref{Section::Smoothing}, we prove Theorem~\ref{Thm::MainThm} \ref{Item::MainThm::Smoothing}. The proof is somewhat technical, so we first give an outline in Section \ref{Section::SmoothingIdea} to provide the reader with the main ideas of the proof, and then we fill in all the details in Section \ref{Section::SmoothingPrecise}. 
In Section \ref{Section::SecCor}, we prove the results for bounded Lipschitz domains. We prove Proposition~\ref{Thm::Intro::SmoothSpeDom} using Theorem~\ref{Thm::MainThm}  \ref{Item::MainThm::Smoothing}; and we prove Theorem~\ref{Thm::Intro::Folklore!} and Proposition~\ref{Prop::Intro::AntiDevBddDom} using Theorem~\ref{Thm::MainThm} \ref{Item::MainThm::Bdd} and \ref{Item::MainThm::AntiDev}.

\section{Historical Remarks}\label{Section::HisRmk}
\subsection{Some previous extension operators}

When $\Omega$ is a bounded $C^\infty$-domain, for any $\eps>0$ one can find an extension operator $S=S^\eps$ that is bounded linear $S:\Bs_{pq}^s(\Omega)\to\Bs_{pq}^s(\R^n)$ and $S:\Fs_{pq}^s(\Omega)\to\Fs_{pq}^s(\R^n)$ for all $-\eps^{-1}<s<\eps^{-1}$, $\eps<p\le\infty$ and $0<q\le\infty$ ($p<\infty$ for the $\Fs$-cases). See \cite[Section 2.9]{TriebelTheoryOfFunctionSpacesI} and \cite[Section 1.11.5]{TriebelTheoryOfFunctionSpacesIII}. The construction is based on the half-plane reflections: for $f:\R^n_+\to\R$, we define
\begin{equation}\label{Eqn::Intro::HalfPlaneExt}
S f(x',x_n):=\begin{cases}f(x',x_n)&x_n>0,
\\\sum_{j=1}^Ma_jf(x',-b_jx_n)    &x_n<0. 
\end{cases}
\end{equation}
Here $M$ is some large integer (depending on $\eps$), $a_j\in\R$ and $b_j>0$ are numbers satisfying $\sum_{j=1}^Ma_j(-b_j)^k=1$ for all integers $k$ with $-\frac M2<k<\frac M2$.

The half-plane construction can be traced back to Lichtenstein \cite{LichtensteinHalfPlane}.  By modifying \eqref{Eqn::Intro::HalfPlaneExt}, Seeley \cite{SeeleyHalfPlane} constructed an extension operator that is bounded $C^\infty(\overline{\R^n_+})\to C^\infty(\R^n)$.
	
The boundedness of $S$ on Besov and Triebel-Lizorkin spaces are mostly done by Triebel (see \cite[Section 4.5.1]{TriebelTheoryOfFunctionSpacesII}). See also \cite[Section 6.4]{YSYMorrey} for some boundedness results of $S$ on hybrid spaces.

Though $S=S^\eps$ is defined on Besov and Triebel-Lizorkin spaces with index $s\in(-\eps^{-1},0]$, in general $Sf|_{\R^n_-}$ is not more regular than $f$ as it reflects all the singularities of $f$. In particular, unlike Rychkov's extension operator, we cannot expect $Sf|_{\R^n_-}$ to be a smooth function if $f$ is not smooth.

\medskip
When $\Omega$ is a bounded Lipschitz domain, for $k\in\N$, $1\le p\le \infty$, we define Sobolev space $W^{k,p}(\Omega)$ in the classical way, which is \eqref{Eqn::Intro::WeiSobo} with $\varphi=1$.

Calder\'on \cite{CalderonExtension} showed that for each $k$ there is a bounded linear extension operator $\Ec^k:W^{k,p}(\Omega)\to W^{k,p}(\R^n)$ for all $1<p<\infty$. Later Stein \cite{SteinSingularIntegral} used an alternative approach to construct an extension operator $\Ec^{S}$ which is independent of $k$, such that $\Ec^{S}:W^{k,p}(\Omega)\to W^{k,p}(\R^n)$ is bounded for all $k\in\N$ and  $1\le p\le \infty$. 

Both results show that $W^{k,p}(\Omega)=\{\tilde f|_\Omega:\tilde f\in W^{k,p}(\R^n)\}$. Indeed $W^{k,p}(\R^n)=\Fs_{p2}^k(\R^n)$ for $1<p<\infty$ (see \cite[Section 2.5.6]{TriebelTheoryOfFunctionSpacesI}), so by definition (see Definition~\ref{Defn::Prem::BsFsDef}) we get 
\begin{equation}\label{Eqn::Intro::Wkp=Fp2Domain}
\Fs_{p2}^k(\Omega)=\{\tilde f|_\Omega:\tilde f\in \Fs_{p2}^k(\R^n)\}=\{\tilde f|_\Omega:\tilde f\in W^{k,p}(\R^n)\}=W^{k,p}(\Omega).
\end{equation}

The reader can also refer to \cite{KalyabinExtension} for some discussion of Stein's extension operator on Triebel-Lizorkin spaces.

\medskip
For some domains whose boundaries are not Lipschitz, it is still possible to define useful extension operators. Jones \cite{JonesLocallyUniform} introduced \textit{locally uniform domains}, for which Lipschitz domains are a special case. For certain locally uniform domains $\Omega$ (in particular all Lipschitz domains) and for each $k\in\N$, Jones constructed an extension operator that maps $W^{k,p}(\Omega)$ to $W^{k,p}(\R^n)$ for all $1\le p\le \infty$. Later, the construction was refined by Rogers \cite{RogersLocallyUniform} where the extension operator was chosen to be independent of $k$. 

We also refer the reader to \cite{ChristExtension}, \cite[Theorem~2]{SeegerNote} and \cite{ExtBMO} for more discussions on Jones' extension.
It would be interesting to see whether the results of our paper can be extended to certain locally uniformly domains. 

There are also a few studies on the case where the domain $\Omega\subseteq\R^n$ is an \textit{arbitrary subset} with no assumption on the boundary, starting from \cite{FeffermanExtension}. For an arbitrary one-dimensional subset $\Omega\subseteq\R^1$, in \cite[Proposition~3.5]{IsraelExtension} there is a Besov extension $E=E_\Omega:\Bs_{pp}^{2-1/p}(\Omega)\to\Bs_{pp}^{2-1/p}(\R)$ for $2<p<\infty$. For higher dimension, in  \cite{IsraelExtension,FeffermanIsraelLuli}, it was shown that for any $n<p<\infty$ and integer $k\ge1$, there exists a Sobolev extension $E=E_{\Omega,k,p}:\Fs_{p2}^k(\Omega)\to\Fs_{p2}^k(\R^n)$.  It is an open problem whether we can make the extension operators independent of $k$. 

% Thus we see that \eqref{Eqn::Intro::Wkp=Fp2Domain} is still valid when $\Omega$ is a locally uniformly domain satisfying the hypothesis used by Rogers  \cite{RogersLocallyUniform}.

\subsection{Two approaches to equivalent norms on domains}\label{Section::HisRmkEqvNorm}

As mentioned earlier, Theorem~\ref{Thm::Intro::Folklore!} was known for bounded $C^\infty$ domains with all $\Bs_{pq}^s$, $\Fs_{pq}^s$, and for bounded Lipschitz domains with $\Bs_{\infty\infty}^s$, $\Fs_{p2}^k$ for $s>m$ and integer $k\ge m$. The proofs for these two special cases have different approaches. We sketch them below.

\medskip
For the case where $\Omega$ is bounded $C^\infty$ domain, by partition of unity and composing with diffeomorphisms, we only need to prove \eqref{Eqn::Intro::Folklore!::Bs} and \eqref{Eqn::Intro::Folklore!::Fs} for the half plane $\Omega=\R^n_+$. On half plane the idea is the following: for $\alpha\neq0$ there is an operator $S_\alpha$ (which turns out to be an extension operator as well) such that
\begin{enumerate}[label=(S.\alph*)]
	\item $\partial^\alpha\circ S=S_\alpha\circ\partial^\alpha$.
	\item $S_\alpha$ has the similar expression to \eqref{Eqn::Intro::HalfPlaneExt}: we can find some numbers $(a^\alpha_j,b^\alpha_j)_{j=1}^M$ such that
	\begin{equation}\label{Eqn::Intro::SAlphaEqn}
	\textstyle S_\alpha f(x',x_n)=f(x',x_n),\quad x_n>0;\qquad S_\alpha f(x',x_n)=\sum_{j=1}^Ma^\alpha_jf(x',-b^\alpha_jx_n),\quad x_n<0.
	\end{equation}
\end{enumerate}
Similar to $S$, one can prove the  boundedness for $S_\alpha$ in Besov and Triebel-Lizorkin spaces.
Therefore for every $m\ge1$ and for $\As\in\{\Bs,\Fs\}$,
\begin{align*}
\|f\|_{\As_{pq}^s(\R^n_+)}&\le\|Sf\|_{\As_{pq}^s(\R^n)}\approx\sum_{|\alpha|\le m}\|\partial^\alpha Sf\|_{\As_{pq}^{s-m}(\R^n)}=\sum_{|\alpha|\le m}\|S_\alpha\partial^\alpha f\|_{\As_{pq}^{s-m}(\R^n)}\lesssim\sum_{|\alpha|\le m}\|\partial^\alpha f\|_{\As_{pq}^{s-m}(\R^n_+)}.
\end{align*}
This proves the essential part of \eqref{Eqn::Intro::Folklore!::Bs} and \eqref{Eqn::Intro::Folklore!::Fs} with $\Omega=\R^n_+$. See \cite[Theorem~3.3.5]{TriebelTheoryOfFunctionSpacesI} for details.

To prove Theorem~\ref{Thm::Intro::Folklore!}, we use a similar argument along with the decomposition \eqref{Eqn::Intro::ExtAtd2}. See Proposition~\ref{Prop::AntiDev::ForklorePrep} and Remark~\ref{Rmk::AntiDev::ForklorePrep}.

\medskip
In the case of Sobolev spaces $W^{k,p}(\Omega)$, where $k\in\N$ and $\Omega$ is a bounded Lipschitz domain, Theorem~\ref{Thm::Intro::Folklore!} can be proved easily by using the extension operator and the intrinsic characterization of the spaces: for every $m\in\Z_+$ and integer $k\ge m$,
\begin{equation}\label{Eqn::Intro::WkpEqvNorm}
\|f\|_{W^{k,p}(\Omega)}\approx\sum_{|\alpha|\le k}\|\partial^\alpha f\|_{L^p(\Omega)}\approx\sum_{|\alpha|\le m}\sum_{|\beta|\le k-m}\|\partial^\alpha\partial^\beta f\|_{L^p(\Omega)}\approx\sum_{|\alpha|\le m}\|\partial^\alpha f\|_{W^{k-m,p}(\Omega)}.
\end{equation}
On the other hand, we have $W^{k,p}(\Omega)=\{\tilde f|_\Omega:\tilde f\in W^{k,p}(\R^n)\}$ (see \cite[Theorem~1.122 (ii)]{TriebelTheoryOfFunctionSpacesIII}). Since $W^{k,p}(\R^n)=\Fs_{p2}^k(\R^n)$ (see \cite[Section 2.5.6]{TriebelTheoryOfFunctionSpacesI}) we have $W^{k,p}(\Omega)=\{\tilde f|_\Omega:\tilde f\in \Fs_{p2}^s(\R^n)\}=\Fs_{p2}^k(\Omega)$. So \eqref{Eqn::Intro::Folklore!::Fs} is done for the $\Fs_{p2}^k$-spaces for $1<p<\infty$ with integer $k\ge m$.

Note that for the $\Fs_{p2}^k$-spaces, \eqref{Eqn::Intro::Folklore!::Fs} is also true when $\Omega$ is a locally uniform domain (which may not be Lipschitz) that satisfies the hypothesis used by \cite{RogersLocallyUniform}. 
For general Triebel-Lizorkin class $\Fs_{pq}^s$, however, we do not have a good intrinsic characterization like \eqref{Eqn::Intro::Folklore!::Fs}, so this approach does not work.

Similarly for the H\"older-Zygmund spaces $\Co^s(\Omega)=\{f\in C^0(\overline{\Omega}):\|f\|_{\Co^s(\Omega)}<\infty\}$ we can use the norm
\begin{align}\notag
\|f\|_{\Co^s(\Omega)}&\textstyle:=\sup|f|+\sup_{x,y\in\Omega;x\neq y}\frac{|f(x)-f(y)|}{|x-y|^s},&\text{provided }0<s<1,
\\\notag
\|f\|_{\Co^s(\Omega)}&\textstyle:=\sup|f|+\sup_{x,y\in\Omega;x\neq y,\frac{x+y}2\in\Omega}\frac{|f(x)+f(y)-2f(\frac{x+y}2)|}{|x-y|},&\text{provided }s=1,
\\\label{Eqn::Intro::CsNorm}
\|f\|_{\Co^s(\Omega)}&\textstyle:=\|f\|_{\Co^{s-1}(\Omega)}+\sum_{j=1}^n\|\partial_{x_j}f\|_{\Co^{s-1}(\Omega)},&\text{provided }s>1.
\end{align}
By \eqref{Eqn::Intro::CsNorm}, we see that $\|f\|_{\Co^s(\Omega)}\approx\sum_{|\alpha|\le m}\|\partial^\alpha f\|_{\Co^{s-m}(\Omega)}$ if $s>m$.
Again by $\Co^s(\Omega)=\{\tilde f|_\Omega:\tilde f\in\Co^s(\R^n)\}$ (see \cite[Theorem~1.122 (ii)]{TriebelTheoryOfFunctionSpacesIII}), since $\Co^s(\R^n)=\Bs_{\infty\infty}^s(\R^n)$ (see \cite[Section 2.5.7]{TriebelTheoryOfFunctionSpacesI}) we have $\Co^s(\Omega)=\{\tilde f|_\Omega:\tilde f\in \Bs_{\infty\infty}^s(\R^n)\}=\Bs_{\infty\infty}^s(\Omega)$, and \eqref{Eqn::Intro::Folklore!::Bs} is done for the $\Bs_{\infty\infty}^s$-spaces when $s>m$.

\section{Boundedness On Besov and Triebel-Lizorkin Spaces} \label{Section::Prelim}

\subsection{Preliminary and preparations}

We recall the characterizations of Besov and Triebel-Lizorkin spaces via the Littlewood-Paley decomposition.
\begin{defn}\label{Defn::Prem::DyaRes}
A \textit{classical dyadic resolution} is a sequence $\lambda=(\lambda_j)_{j=0}^\infty$ of Schwartz functions on $\R^n$, denoted by $\lambda\in\Cf$, such that the Fourier transforms $\widehat\lambda_j(\xi)=\int_{\R^n}\lambda_j(x)e^{-2\pi ix \cdot \xi} dx$ satisfies
\begin{itemize}
    \item $\widehat\lambda_0\in C_c^\infty\{|\xi|<2\}$, $\widehat\lambda_0|_{\{|\xi|<1\}}\equiv1$.
    \item $\widehat\lambda_j(\xi)=\widehat\lambda_0(2^{-j}\xi)-\widehat\lambda_0(2^{-(j-1)} \xi)$ for $j\ge1$ and $\xi\in\R^n$.
\end{itemize}
\end{defn}
\begin{rem}\label{Rmk::Prem::DyaResRmk}
Note that for each $j \geq 1$, $\widehat\lambda_j$ is supported on the annulus 
$ \{ 2^{j-1} < |\xi| < 2^{j+1} \}$. Thus for any $j \geq 0$ we have $\sum_{k= 0}^{\infty} \widehat{\lambda}_k|_{\supp \widehat\lambda_j} = \sum_{k=j-1}^{j+1} \widehat\lambda_k|_{\supp\widehat\lambda_j}\equiv1$, where we set $\lambda_{-1}=0$. 
If we denote $\mu_j:=\sum_{k=j-1}^{j+1}\lambda_j$ for $j\ge0$, then 
    \begin{enumerate}[(a)]
        \item\label{Item::Prem::DyaResRmk::Conv} $\lambda_j=\mu_j\ast \lambda_j$ for all $j\ge0$.
        \item\label{Item::Prem::DyaResRmk::Scaling} $\mu_j(x)=2^{(j-2)n}\mu_2(2^{j-2}x)$ for $j\ge2$.
    \end{enumerate}
\end{rem}

For any $f\in\Ss'(\R^n)$, we have the decomposition $f=\sum_{j=0}^\infty(\widehat\lambda_j\widehat f)^\vee=\sum_{j=0}^\infty\lambda_j\ast f$ and both sums converge as tempered distribution. We can define the Besov and Triebel-Lizorkin spaces using such $\lambda$.

Recall from Convention~\ref{Conv::Intro::AdmissibleIndex} that the pair $(p,q)$ is said to be \textit{admissible}, if we are discussing the function class $\Bs_{pq}^s$ for $0<p,q\le\infty$, and the function class $\Fs_{pq}^s$ for $0<p<\infty$, $0<q\le\infty$ or for $p=q=\infty$. 
% \begin{lemma}\label{Lem::Prem::EqvNorm}
% For $\lambda\in\Cf$, $s\in\R$ and admissible $(p,q)$, we denote
% \begin{align*}
%     \|f\|_{\Bs_{pq}^s(\lambda)}:=\|(2^{js}\lambda_j\ast f)_{j=0}^\infty\|_{\ell^q(L^p)}&=\left\|\big(2^{js}\|\lambda_j\ast f\|_{L^p(\R^n)}\big)_{j=0}^\infty\right\|_{\ell^q(\N)},\\
%     \|f\|_{\Fs_{pq}^s(\lambda)}:=\|(2^{js}\lambda_j\ast f)_{j=0}^\infty\|_{L^p(\ell^q)}&=\left\|\big\|(2^{js}\lambda_j\ast f)_{j=0}^\infty\big\|_{\ell^q(\N)}\right\|_{L^p(\R^n)},\qquad f\in\Ss'(\R^n).
% \end{align*}
% \und{Then both $\|\cdot\|_{\Bs_{pq}^s(\lambda)}$ and $\|\cdot\|_{\Fs_{pq}^s(\lambda)}$ are (quasi-)norms in the space where they are finite, respectively. Moreover, their norm topologies are independent of the choice of $\lambda$}. In other words, for any $\lambda,\lambda'\in\Cf$,  admissible pair $(p,q)$ and $s\in\R$, there is a $C=C_{\lambda,\lambda',p,q,s}>0$ such that for every $f\in\Ss'(\R^n)$,
% \begin{align}\label{Eqn::Prem::EqvNorm}
%     &\|f\|_{\As_{pq}^s(\lambda)}\le C\|f\|_{\As_{pq}^s(\lambda')}.
% \end{align}
% \end{lemma}

\begin{defn}\label{Defn::Prem::BsFsDef}
Let $(\lambda_j)_{j=0}^\infty\in\Cf$, $s\in\R$ and let $(p,q)$ be admissible. We denote
\begin{align*}
    \|f\|_{\Bs_{pq}^s(\lambda)}:=\|(2^{js}\lambda_j\ast f)_{j=0}^\infty\|_{\ell^q(L^p)}&=\left\|\big(2^{js}\|\lambda_j\ast f\|_{L^p(\R^n)}\big)_{j=0}^\infty\right\|_{\ell^q(\N)},\\
    \|f\|_{\Fs_{pq}^s(\lambda)}:=\|(2^{js}\lambda_j\ast f)_{j=0}^\infty\|_{L^p(\ell^q)}&=\left\|\big\|(2^{js}\lambda_j\ast f)_{j=0}^\infty\big\|_{\ell^q(\N)}\right\|_{L^p(\R^n)}.
\end{align*}

We define the \emph{Besov space $\Bs_{pq}^s(\R^n)$} to be the set of all $f \in \Ss'(\R^n)$ such that $\|f\|_{\Bs_{pq}^s(\lambda)}<\infty$; and the \emph{Triebel-Lizorkin space $\Fs_{pq}^s(\R^n)$} to be the set of all $f\in\Ss'(\R^n)$ such that $\|f\|_{\Fs_{pq}^s(\lambda)}<\infty$.

We use the norm $\|\cdot\|_{\Bs_{pq}^s(\R^n)}=\|\cdot\|_{\Bs_{pq}^s(\lambda)}$ and $\|\cdot\|_{\Fs_{pq}^s(\R^n)}=\|\cdot\|_{\Fs_{pq}^s(\lambda)}$ for an (implicitly chosen) $\lambda\in\Cf$.

Let $\Omega\subseteq\R^n$ be an arbitrary open subset. We define $\Bs_{pq}^s(\Omega):=\{\tilde f|_\Omega:\tilde f\in\Bs_{pq}^s(\R^n)\}$ and $\Fs_{pq}^s(\Omega):=\{\tilde f|_\Omega:\tilde f\in\Fs_{pq}^s(\R^n)\}$ as the subspace of distributions in $\Omega$, with norms
\begin{equation*}
    \|f\|_{\Bs_{pq}^s(\Omega)}:=\inf\{\|\tilde f\|_{\Bs_{pq}^s(\R^n)}:\tilde f|_\Omega=f\},\quad \|f\|_{\Fs_{pq}^s(\Omega)}:=\inf\{\|\tilde f\|_{\Fs_{pq}^s(\R^n)}:\tilde f|_\Omega=f\}.
\end{equation*}
\end{defn}

\begin{rem}\label{Rmk::Prem::BsFsRmk}
\begin{enumerate}[(i)]
    \item  Different choices of $(\lambda_j)_j$ produce equivalent Besov and Triebel-Lizorkin norms, see \cite[Proposition~2.3.2]{TriebelTheoryOfFunctionSpacesI}. So we can use  $\|\cdot\|_{\Bs_{pq}^s(\R^n)}$ and $\|\cdot\|_{\Fs_{pq}^s(\R^n)}$ by dropping $\lambda$ in the notations.
    In application we fix one such choice of $(\lambda_j)\in\Cf$.
    \item When $1\le p,q\le\infty$, $\|\cdot\|_{\Bs_{pq}^s}$ and $\|\cdot\|_{\Fs_{pq}^s}$ are norms. When $p<1$ or $q<1$ they are only quasi-norms.
    \item\label{Item::Prem::BsFsRmk::Special} For $1<p<\infty$, $\Fs_{p2}^s(\R^n)=H^{s,p}(\R^n)$ is the fractional Sobolev space (also known as the Bessel potential space). For  $s>0$,  $\Bs_{\infty\infty}^s(\R^n)=\Co^s(\R^n)$ is the H\"older-Zygmund space. 
    \item It is clear from definition that if $(\lambda_j)_{j=0}^\infty\in\Cf$ then $(\lambda_j)_{j=1}^\infty\in\Gf$ (Definition~\ref{Defn::Intro::GenRes}). However, by a more sophisticated version of the uncertainty principle (for example, the Nazarov's uncertainty principle \cite{JamingNazarovUP}), if a function has compact support on the Fourier side, then it cannot vanish outside any open subset on the physical side. In particular, for a cone $K\subsetneq\R^n$, there is no  $(\lambda_j)_{j=0}^\infty\in\Cf$ such that $(\lambda_j)_{j=1}^\infty\in\Gf(K)$. 
    \item We do not investigate $\Fs_{\infty q}^s(\R^n)$ or $\Fs_{\infty q}^s(\Omega)$ for $0<q<\infty$ in this paper, since $\|\cdot\|_{\Fs_{\infty q}^s(\lambda)}$ are not equivalent norms for different $\lambda\in\Cf$. Using a correctly modified version of the norm (see \cite{TriebelTheoryOfFunctionSpacesIV,YSYMorrey} for examples), the analogs of Theorems \ref{Thm::Intro::Folklore!} and \ref{Thm::MainThm} \ref{Item::MainThm::Bdd} are still true. See \cite{YaoMorrey}. % Even though some of the results in the main theorem can be obtained in the $\Fs_{\infty q}^s$-setting. See the discussion in Section \ref{Section::Further}.
    \item\label{Item::Prem::BsFsRmk::InftyInfty} In literatures like \cite{TriebelTheoryOfFunctionSpacesI}, the results for Triebel-Lizorkin space $\Fs_{pq}^s$ are usually only given for $0<p<\infty$, $0<q\le\infty$, and the case $p=q=\infty$ is omitted.  
   Note that $\|\cdot\|_{\Bs_{\infty\infty}^s(\lambda)}=\|\cdot\|_{\Fs_{\infty\infty}^s(\lambda)}$ for every $\lambda=(\lambda_j)_{j=0}^\infty$, so if we have a result for Besov space $\Bs_{\infty\infty}^s$, then the same holds for $\Fs_{\infty\infty}^s$ since $\Fs_{\infty\infty}^s=\Bs_{\infty\infty}^s$ is identically the same space (also see \cite[Remark~2.3.4/3]{TriebelTheoryOfFunctionSpacesI}). This is the reason we include $(p,q)=(\infty,\infty)$ as admissible indices (in 
Convention~\ref{Conv::Intro::AdmissibleIndex}) in the discussion of Triebel-Lizorkin spaces. 
\end{enumerate}
\end{rem}

In fact the norms $\|\cdot\|_{\Bs_{pq}^s(\lambda)}$ ($\|\cdot\|_{\Fs_{pq}^s(\lambda)}$) are still equivalent if we merely assume $\lambda_0,\lambda'_0\in\Ss(\R^n)$ that satisfy $\widehat\lambda_0(\xi)=1+O(|\xi|^\infty)$ and $\widehat\lambda'_0(\xi)=1+O(|\xi|^\infty)$ as $\xi\to0$; see \cite[Proposition~1.2]{ExtensionLipschitz}. The proof relies on Heideman type estimate \cite{Heideman}, a more general version of which is the following. 
\begin{prop}\label{Prop::Prem::Heideman}

Let $\eta=(\eta_j)_{j=1}^\infty\in\Gf$. Then for any $M\in\Z_+$ there is a $C=C_{\eta,M}>0$ such that
\begin{equation}\label{Eqn::Prem::Heideman::Main}
    \int_{|x|>2^{-k}} |\eta_j \ast g(x)|(1+2^j|x|)^Mdx\le C\|g\|_{2M+n}\cdot 2^{-M\max(j,j-k)},\quad\forall j\in\Z_+,k\in\Z,\quad g\in \Ss(\R^n).
\end{equation}

Here for $l\in\N$, we use 
\begin{equation}\label{Eqn::Prem::Heideman::AxuNorm}
    \|g\|_l:=\sum_{|\alpha|\le l}\sup_{x\in\R^n}(1+|x|)^l|\partial^\alpha g(x)|.
\end{equation}
In particular, by letting $k\to+\infty$ in \eqref{Eqn::Prem::Heideman::Main}, we get    
\begin{equation}\label{Eqn::Prem::Heideman::Rmk}
    \int_{\R^n} |\eta_j \ast g(x)|(1+2^j|x|)^Mdx\le C\|g\|_{2M+n}\cdot 2^{-Mj},\quad\forall j\in\Z_+,\quad g\in\Ss(\R^n),
\end{equation}
with the same constant $C=C_{\eta,M}>0$.
\end{prop}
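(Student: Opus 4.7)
The strategy is to combine the infinite-order moment vanishing of $\eta_1$ (inherited by every $\eta_j$ after rescaling) with a Taylor expansion of $g$ of suitably high order. For any $L\in\Z_+$, the vanishing moments allow one to subtract from $g(x-y)$ the Taylor polynomial of $g$ at $x$ of degree $L-1$ inside the convolution; using the integral form of the remainder,
\begin{equation*}
\eta_j*g(x)=L\sum_{|\alpha|=L}\frac{(-1)^{L}}{\alpha!}\int\eta_j(y)\,y^{\alpha}\int_{0}^{1}(1-t)^{L-1}\partial^{\alpha}g(x-ty)\,dt\,dy.
\end{equation*}
After the substitution $z=2^{j-1}y$, the factor $y^{\alpha}=2^{-(j-1)L}z^{\alpha}$ pulls out an overall factor of $2^{-jL}$.

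The decisive choice is $L=2M$. Since the spatial weight $(1+2^{j}|x|)^{M}$ can itself be as large as $2^{jM}$, taking the naive order $L=M$ is useless (the gain is exactly cancelled by the weight), whereas $L=2M$ still leaves a net factor $2^{-jM}$. To exploit this, multiply the above identity by $(1+2^{j}|x|)^{M}$, use the triangle-type bound
\begin{equation*}
(1+2^{j}|x|)^{M}\le C_{M}\bigl[(1+2^{j}|u|)^{M}+(t|z|)^{M}\bigr],\qquad u:=x-2^{-(j-1)}tz,
\end{equation*}
and substitute $x\mapsto u$ in the $x$-integration. Combined with the Schwartz bound $|\partial^{\alpha}g(u)|\le\|g\|_{2M+n}(1+|u|)^{-(2M+n)}$ and the elementary estimate $\int_{\R^{n}}(1+2^{j}|u|)^{M}(1+|u|)^{-(2M+n)}\,du\lesssim 2^{jM}$, a direct computation gives
\begin{equation*}
\int_{\R^{n}}|\eta_j*g(x)|(1+2^{j}|x|)^{M}\,dx\lesssim\|g\|_{2M+n}\,2^{-jM},
\end{equation*}
which establishes \eqref{Eqn::Prem::Heideman::Rmk} and covers the case $k\ge 0$ of \eqref{Eqn::Prem::Heideman::Main}, where $\max(j,j-k)=j$.

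For the refinement when $k<0$ (so $2^{-k}>1$), we must produce an additional factor $2^{Mk}$. Splitting the $y$-integration in the Taylor identity into $|y|\le|x|/2$ and $|y|>|x|/2$, I would use Schwartz decay of $\partial^{2M}g$ on the first piece (where $|x-ty|\ge|x|/2$ forces $|\partial^{2M}g(x-ty)|\lesssim\|g\|_{2M+n}(1+|x|)^{-(2M+n)}$), and use the rapid Schwartz decay of $\eta_j$ on the second piece to get decay in $2^{j}|x|$. Together these give a pointwise bound of the form
\begin{equation*}
|\eta_j*g(x)|\lesssim\|g\|_{2M+n}\,2^{-2jM}\bigl[(1+|x|)^{-(2M+n)}+(1+2^{j}|x|)^{-(2M+n)}\bigr].
\end{equation*}
Multiplying by $(1+2^{j}|x|)^{M}$ and integrating over $|x|>2^{-k}$ yields $\int_{|x|>2^{-k}}(1+2^{j}|x|)^{M}(1+|x|)^{-(2M+n)}\,dx\lesssim 2^{jM}\cdot 2^{kM}$, so the full integral is bounded by $\|g\|_{2M+n}\,2^{-M(j-k)}=\|g\|_{2M+n}\,2^{-M\max(j,j-k)}$, as required.

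The main technical obstacle is the realization in the second paragraph that one must expand to order $2M$ rather than $M$: the weight $(1+2^{j}|x|)^{M}$ already costs $2^{jM}$, so only the order $2M$ produces the net $2^{-jM}$ decay. Once this is understood, the precise Schwartz index $2M+n$ in the norm $\|g\|_{2M+n}$ is forced by the convergence requirement $(2M+n)-M-n>0$, and the rest is careful bookkeeping of decay in $x$ vs.\ $2^{j}|x|$.
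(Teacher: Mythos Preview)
Your approach is correct and rests on the same core idea as the paper: exploit the vanishing moments of $\eta_j$ via a Taylor expansion of $g$ of sufficiently high order. The difference is organizational. The paper handles all $k\in\Z$ in a single pass by expanding to order $N=2M+n$ (not $2M$), splitting the $y$-integral at $|y|=2^{j-1}|x|$, and integrating directly in polar coordinates; the extra $+n$ in the Taylor order is what makes the integral $\int_{|x|>2^{-k}}(1+|x|)^{M-N}|x|^{n-1}\,d|x|$ converge and produce the factor $\min(1,2^{kM})$ simultaneously, so no case distinction on $k$ is needed. You instead expand to order $2M$, first treat the full-space integral \eqref{Eqn::Prem::Heideman::Rmk} by the substitution $x\mapsto u=x-2^{-(j-1)}tz$ together with the additive splitting of the weight, and then for $k<0$ derive an intermediate pointwise bound (via the split $|y|\lessgtr|x|/2$, which is morally the same as the paper's split) before integrating. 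Both routes are valid; the paper's is shorter and avoids the case analysis, while yours makes the competition between the two decay scales $(1+|x|)^{-1}$ and $(1+2^j|x|)^{-1}$ more explicit through the pointwise estimate.
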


\begin{proof}
For simplicity we denote $\eta_0(x):=2^{-n}\eta_1(2^{-1}x)$. So $\eta_j(x)=2^{nj}\eta_0(2^jx)$ for all $j\ge1$.

By assumption $\eta_0\in\dot\Ss(\R^n)$ (see Definitions \ref{Defn::Intro::GenRes} and \ref{Defn::Intro::S0Space}), we have $\int x^\alpha\eta_0(x)dx=0$ for all $\alpha$. Therefore
\begin{align*}
    \eta_j \ast g(x) &=\int\eta_0( y)g(x-2^{-j}y)dy= \int\eta_0(y)\Big(g(x-2^{-j}y)-\sum_{|\alpha|<N}(-2^{-j}y)^\alpha\frac{\partial^\alpha g(x)}{\alpha!}\Big)dy.
\end{align*}
Here $N$ is some large number to be chosen. By Taylor's theorem,
\begin{equation*}
    \Big|g(x-2^{-j}y)-\sum_{|\alpha|<N}(-2^{-j}y)^\alpha\frac{\partial^\alpha g(x)}{\alpha!}\Big|\le\frac{1}{N!} | 2^{-j} y |^{N} \sup_{B(x, 2^{-j} |y| )}  |\nabla^{N} g|,\quad x,y\in\R^n.
\end{equation*}

Using the notion $\|g\|_N=\sum_{|\alpha|\le N}\sup_x(1+|x|)^N|\partial^\alpha g(x)|$, we have  
\begin{equation}
    \sup\limits_{B(x,2^{-j}|y|)}| \nabla^{N} g|  \le\|g\|_{N}\cdot\begin{cases}2^N(1+ |x|)^{- N}, & |x| \geq 2^{1-j} |y|,\\ 1,&|x|<2^{1-j} |y|,
  \end{cases}\quad x,y\in\R^n.
\end{equation}
Therefore
\begin{align*}  
    &\int_{|x|>2^{-k} } (1+2^j|x|)^M |\eta_j  \ast g(x)|dx
    \\
    \le &\int_{|x|  \geq 2^{-k}}(1+2^j|x|)^M dx \bigg( \int_{|y|<2^{j-1} |x| } + \int_{|y| \geq 2^{j-1}|x|}  \bigg)\frac{| 2^{-j} y |^{N} }{N!}\Big( \sup\limits_{B(x, 2^{-j} |y| )} |\nabla^{N}g|\Big)|\eta_0(y)|dy
    \\ 
    \le&\|g\|_{N}\int_{|x|  \geq 2^{-k}}(1+2^j|x|)^M dx \bigg( \int_{|y|< 2^{j-1}  |x| } | 2^{-j} y |^{N} (1+ |x|)^{-{N}} |\eta_0(y) |dy + \int_{|y| \geq 2^{j-1}  |x|} | 2^{-j} y |^{N} | \eta_0 (y) |dy \bigg).
\end{align*}   

Using spherical coordinates $\rho=|x|$, $r=|y|$, and the fact that $\eta_0$ is rapidly decay,  
\begin{align*} 
    &\int_{|x|  \geq 2^{-k}}(1+2^j|x|)^M dx \bigg( \int_{|y|< 2^{j-1}  |x| } | 2^{-j} y |^{N} (1+ |x|)^{-{N}} |\eta_0(y) |dy + \int_{|y| \geq 2^{j-1}  |x|} | 2^{-j} y |^{N} | \eta_0 (y) |dy \bigg)
    \\
    \lesssim&_{\eta_0}2^{-j{N}} \int_{2^{-k}} ^{\infty}(1+2^j\rho)^M \rho^{n-1}d \rho  \bigg( (1+ \rho)^{-{N}}   \int_0^{\infty}  r^{N}  (1+r)^{-2{N} -n}r^{n-1}dr + \int_{2^{j-1} \rho}^{\infty} r^{N} (1+r)^{-2{N} -n} r^{n-1}dr \bigg)
    \\
    \lesssim&_N2^{-j {N}}   \int_{2^{-k}} ^{\infty}  \rho^{n-1}(1+2^j\rho)^M  \left(  (1 + \rho)^{-N} + \int_{2^{j-1} \rho}^{ \infty}  (1+ r)^{-{N}-1}dr \right)d \rho
    \\
    \lesssim&_N2^{-j{N}}  \int_{2^{-k}}^{\infty} \rho^{n-1}(1+2^j\rho)^M  \left( 2^{jM}(1+2^j\rho)^{-M}( 1+ \rho )^{M-N} + (1+ 2^{j-1} \rho)^{-{N}} \right)d  \rho
    \\
    \lesssim&_N2^{-j(N-M)}  \int_{2^{-k}}^{\infty} \rho^{n-1} ( 1+ \rho )^{M-N}d  \rho,\hspace{2.85in}(\text{assuming }N>M)
    \\
    \lesssim&_{M,N} 2^{-j(N-M)}\min(1,2^{-(M+n-N)k}),\hspace{2.75in}(\text{assuming }N>M+n).
\end{align*}   

Taking ${N}=2M + n$, the above is bounded by $2^{-Mj}  \min(1,2^{Mk})\le2^{-M\max(j,j-k)}$, which concludes the proof.
\end{proof}

\begin{cor}\label{Cor::Prem::HeidemanCor}
Let $\eta_0,\theta_0\in\dot\Ss(\R^n)$, define $\eta_j(x):=2^{jn}\eta_0(2^jx)$ and $\theta_j(x):=2^{jn}\theta_0(2^jx)$ for $j\in\Z$.
Then for any $M,N\ge0$ there is $C=C(\eta,\theta,M,N)>0$ such that
\begin{gather}\label{Eqn::Prem::Heideman1}
     \int_{\R^n}|\eta_j\ast\theta_k(x)|(1+2^{\max(j,k)}|x|)^{N}dx\le C 2^{-M|j-k|},\quad\forall j,k\in\Z.
\end{gather}
\end{cor}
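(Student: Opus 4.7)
The plan is to reduce the two-parameter inequality to Proposition~\ref{Prop::Prem::Heideman} by a symmetry argument combined with a dilation. Since the statement is symmetric in $\eta$ and $\theta$ (via $\eta_j\ast\theta_k=\theta_k\ast\eta_j$), I may assume without loss of generality that $j\ge k$, so that $\max(j,k)=j$ and $|j-k|=j-k=:l\ge 0$.

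Next, I would perform the change of variables $y=2^kx$ to absorb the lower scale. A direct computation shows $(\eta_j\ast\theta_k)(2^{-k}y)=2^{kn}(\tilde\eta_l\ast\theta_0)(y)$, where $\tilde\eta_l(u):=2^{ln}\eta_0(2^lu)$; after substitution the desired integral becomes
\begin{equation*}
\int_{\R^n}|\eta_j\ast\theta_k(x)|(1+2^j|x|)^N dx=\int_{\R^n}|\tilde\eta_l\ast\theta_0(y)|(1+2^l|y|)^N dy.
\end{equation*}
The family $(\tilde\eta_l)_{l=1}^\infty$ lies in $\Gf$: the moment-vanishing of $\tilde\eta_1=2^n\eta_0(2\,\cdot)$ is inherited from $\eta_0\in\Ss_0(\R^n)$, and the identity $\tilde\eta_l(u)=2^{(l-1)n}\tilde\eta_1(2^{l-1}u)$ for $l\ge 2$ is immediate from the definition.

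Now for $l\ge 1$, I would apply Proposition~\ref{Prop::Prem::Heideman} to this family with $g=\theta_0\in\Ss(\R^n)$, taking the exponent parameter to be $\max(M,N)$ rather than $M$ or $N$ individually. Since $(1+2^l|y|)^N\le(1+2^l|y|)^{\max(M,N)}$ and $2^{-\max(M,N)l}\le 2^{-Ml}$, the proposition yields
\begin{equation*}
\int_{\R^n}|\tilde\eta_l\ast\theta_0(y)|(1+2^l|y|)^N dy\le C_{\eta,\theta,M,N}\,2^{-Ml},
\end{equation*}
with constant depending on $\|\theta_0\|_{2\max(M,N)+n}$. The case $l=0$ is handled separately: $\eta_0\ast\theta_0\in\Ss(\R^n)$, so the integrand is Schwartz and the integral is a finite constant, which is bounded by $C\cdot 2^{-M\cdot 0}$.

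The only non-routine point, and the feature worth flagging, is precisely this decoupling of $M$ and $N$: Proposition~\ref{Prop::Prem::Heideman} ties the weight exponent and the decay exponent together, so the inequality is not a direct invocation but requires choosing the coupled parameter to be $\max(M,N)$ and then discarding the excess on each side. Everything else is bookkeeping: the dilation to normalize the lower scale, the verification that the rescaled family remains in $\Gf$, and the trivial $l=0$ endpoint.
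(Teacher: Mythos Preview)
Your proof is correct and follows essentially the same approach as the paper's: both reduce to the case $j\ge k$ by symmetry, rescale to convert the two-parameter estimate into the one-parameter estimate of Proposition~\ref{Prop::Prem::Heideman} (specifically \eqref{Eqn::Prem::Heideman::Rmk}) applied with $g=\theta_0$, and decouple $M$ from $N$ by working with $\max(M,N)$. The only cosmetic difference is that your $\tilde\eta_l$ coincides with the $\eta_l$ already defined in the corollary, so the new notation is redundant; also, the paper absorbs the $l=0$ case into the same estimate rather than treating it separately, but this is immaterial.
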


\begin{proof}
This is the special case of \cite[Lemma~2.1]{Bui}. Replacing $M$ or $N$ by $\max(M,N)$ we can assume $M=N$. Using \eqref{Eqn::Prem::Heideman::Rmk} with scaling we have 
$$\|(1+2^j|x|)^M(\eta_j\ast\theta_k)\|_{L^1}=\|(1+2^{j-k}|x|)^M(\eta_{j-k}\ast\theta_0)\|_{L^1}\overset{\eqref{Eqn::Prem::Heideman::Rmk}}\lesssim 2^{-M(j-k)},\quad\text{if }j\ge k.$$
By symmetry we get the corresponding estimate for $j\le k$.
\end{proof}

The proof of the boundedness of $T^{\eta,\theta,r}_\omega$ in Triebel-Linzorkin spaces follows from the method in \cite[Section 2]{ExtensionLipschitz}, which uses the Peetre's maximal functions.
\begin{defn}
Let $N>0$ and let $\eta=(\eta_j)_j\subset\Ss(\R^n)$ be a sequence of Schwartz functions (for either $j\in\Z_+$, $j\in\N$ or $j\in\Z$). The associated \textit{Peetre maximal operators} $(\Pc^{\eta,N}_j)_j$ to $\eta$ are given by
\begin{equation}
    \Pc^{\eta,N}_jf(x):=\sup\limits_{y\in\R^n}\frac{|\eta_j\ast f(x-y)|}{(1+2^j|y|)^{N}},\quad f\in\Ss'(\R^n),\quad x\in\R^n.
\end{equation} 
\end{defn}

We use the following Peetre's maximal estimate. See \cite{Peetre} (also see \cite{PeetreCorrection} for a correction).
\begin{prop}[Peetre's maximal estimate]\label{Prop::Prem::PeetreMax}
   Let $(p,q)$ be admissible (see Convention~\ref{Conv::Intro::AdmissibleIndex}). Let $s\in\R$ and $N>n/\min(p,q)$. Let $\eta=(\eta_j)_{j=1}^\infty\in\Gf$. Then there is a $C=C_{p,q,s,N,\eta}>0$ such that for every $f\in\Ss'(\R^n)$,
\begin{align}\label{Eqn::Prem::PeetreMaxBs}
\|(2^{js}\Pc^{\eta,N}_jf)_{j=1}^\infty\|_{\ell^q(L^p)}&\le C\|f\|_{\Bs_{pq}^s(\R^n)}.
\\
\label{Eqn::Prem::PeetreMax}
    \|(2^{js}\Pc^{\eta,N}_jf)_{j=1}^\infty\|_{L^p(\ell^q)}&\le C\|f\|_{\Fs_{pq}^s(\R^n)}.
\end{align}

Let $\eta_0\in\Ss(\R^n)$. Then there is a $C=C_{p,q,s,N,\eta_0}>0$ such that 
\begin{align}\label{Eqn::Prem::PeetreMaxSing}
\Big\|\sup_{y\in\R^n}\frac{|\eta_0\ast f(y)|}{(1+|\cdot-y|)^N}\Big\|_{L^p(\R^n)}\le C\|f\|_{\As_{pq}^s(\R^n)}.
\end{align}
\end{prop}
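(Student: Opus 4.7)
The strategy is to reduce both inequalities to a single pointwise bound of the form
\[
2^{js}\Pc^{\eta,N}_j f(x)\ls \sum_{k=0}^{\infty}2^{-M|j-k|}\,2^{ks}\bigl[M_{\mathrm{HL}}(|\lambda_k\ast f|^t)(x)\bigr]^{1/t},\qquad j\in\Z_+,\ x\in\R^n,
\]
where $(\lambda_k)_{k=0}^{\infty}\in\Cf$ is any fixed classical dyadic resolution, $M_{\mathrm{HL}}$ denotes the Hardy--Littlewood maximal operator, $t$ is chosen with $n/t<N$ and $t<\min(p,q)$ (possible by the hypothesis $N>n/\min(p,q)$), and $M$ is an arbitrarily large constant. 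Once this is established, \eqref{Eqn::Prem::PeetreMaxBs} follows from Minkowski in $L^p(\R^n)$ combined with Young's inequality on $\Z$ against the summable sequence $\{2^{-M|k|}\}\in\ell^1$, while \eqref{Eqn::Prem::PeetreMax} follows by invoking the Fefferman--Stein vector-valued maximal inequality on $L^{p/t}(\ell^{q/t})$ (applicable because $p/t,q/t>1$) together with a discrete convolution in $\ell^q$. The equivalence of the resulting right-hand sides with $\|f\|_{\Bs_{pq}^s}$ and $\|f\|_{\Fs_{pq}^s}$ is furnished by Lemma~\ref{Lem::Prem::EqvNorm}.

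\textbf{Pointwise step.} The pointwise inequality will be proved by combining the reproducing identity from Remark~\ref{Rmk::Prem::DyaResRmk}\ref{Item::Prem::DyaResRmk::Conv}, the Plancherel--P\'olya sub-mean-value estimate for band-limited functions, and Corollary~\ref{Cor::Prem::HeidemanCor}. Setting $\mu_k:=\sum_{l=k-1}^{k+1}\lambda_l$ so that $\lambda_k\ast f=\mu_k\ast(\lambda_k\ast f)$, we write $\eta_j\ast f=\sum_{k\ge0}(\eta_j\ast\mu_k)\ast(\lambda_k\ast f)$. Because $\widehat{\lambda_k\ast f}$ is supported in $\{|\xi|\le 2^{k+1}\}$, the standard sub-mean-value property gives, for any $0<t\le 1$ with $n/t\le N$,
\[
|\lambda_k\ast f(x-y-z)|\ls(1+2^k|y+z|)^{n/t}\,\bigl[M_{\mathrm{HL}}(|\lambda_k\ast f|^t)(x)\bigr]^{1/t}.
\]
Substituting this and dividing by $(1+2^j|y|)^N$, the factor $(1+2^k|y+z|)^{n/t}/(1+2^j|y|)^N$ is controlled by $(1+2^k|z|)^N\cdot 2^{(k-j)_+ N}$, while Corollary~\ref{Cor::Prem::HeidemanCor} supplies $\int|\eta_j\ast\mu_k|(z)(1+2^{\max(j,k)}|z|)^N\,dz\ls 2^{-M'|j-k|}$ for any $M'$. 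Absorbing the polynomial factor $2^{(k-j)_+ N}$ into the Heideman decay (at the cost of replacing $M'$ with $M:=M'-N$, still arbitrarily large) yields the claimed pointwise bound.

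\textbf{The $\eta_0$ estimate and the main obstacle.} For \eqref{Eqn::Prem::PeetreMaxSing} the same scheme applies: there is no sum over $j$, so no decay in $|j-k|$ is required, and one only needs $\eta_0\ast f=\sum_k(\eta_0\ast\mu_k)\ast(\lambda_k\ast f)$ together with the same sub-mean-value bound and the standard fact $\|[M_{\mathrm{HL}}(|g|^t)]^{1/t}\|_{L^p}\ls\|g\|_{L^p}$ valid for $t<p$; a single application of Fefferman--Stein (or Young on $\Z$ in the Besov case) then closes the argument. The main obstacle is the scale mismatch in the pointwise step: the denominator $(1+2^j|y|)^N$ in $\Pc^{\eta,N}_j$ is adapted to the scale $2^j$ whereas the sub-mean-value factor $(1+2^k|\cdot|)^{n/t}$ is adapted to the scale $2^k$, so reconciling them uniformly in $j,k\in\N$ is exactly what forces us to extract arbitrarily fast decay $2^{-M|j-k|}$ from Corollary~\ref{Cor::Prem::HeidemanCor}. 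Once this bookkeeping is done, all remaining steps--reproducing formula, Plancherel--P\'olya, Fefferman--Stein, and Young's inequality--are standard.
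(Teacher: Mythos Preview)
The paper does not give its own proof of this proposition: immediately after the statement it simply cites Peetre's original paper and \cite[Theorem~5.1]{Bui}, remarking that \cite[(5.4)]{Bui} already contains a stronger version of \eqref{Eqn::Prem::PeetreMaxBs} and \eqref{Eqn::Prem::PeetreMaxSing}. Your sketch is essentially the standard argument that appears in those references (reproducing formula, Plancherel--P\'olya sub-mean-value estimate, Heideman-type kernel decay, Fefferman--Stein vector maximal inequality), so in spirit your approach and the paper's cited approach coincide.

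Two small technical points worth tightening. First, when you write ``Minkowski in $L^p$'' and ``Young's inequality on $\Z$'', these fail as stated for $p<1$ or $q<1$; the correct substitute is exactly Lemma~\ref{Lem::Prem::ASumLemma} of the paper, which handles the quasi-Banach range. Second, Corollary~\ref{Cor::Prem::HeidemanCor} as stated requires both convolution factors to lie in $\Ss_0$, but $\mu_0,\mu_1$ do not (their Fourier transforms equal $1$ at the origin); for those two terms you need to fall back on Proposition~\ref{Prop::Prem::Heideman} with $g=\mu_0$ or $g=\mu_1$, which still gives the required $2^{-Mj}$ decay. Similarly, in your treatment of \eqref{Eqn::Prem::PeetreMaxSing} you say ``no decay in $|j-k|$ is required'', but you do still need decay in $k$ to sum $\sum_k(\eta_0\ast\mu_k)\ast(\lambda_k\ast f)$, and that decay again comes from the moment vanishing of $\mu_k$ for $k\ge 2$ via Proposition~\ref{Prop::Prem::Heideman}. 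None of these are genuine gaps; with these adjustments your outline is correct.
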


In Peetre's original proof, only \eqref{Eqn::Prem::PeetreMax} is  explicitly proved and his proof was done in the setting of homogeneous Triebel-Lizorkin spaces. In our case we use non-homogeneous Besov and Triebel-Lizorkin spaces, and the corresponding results were proved for example in \cite[Theorem~5.1]{Bui}. Note that \cite[(5.4)]{Bui} is a stronger version to both \eqref{Eqn::Prem::PeetreMaxBs} and \eqref{Eqn::Prem::PeetreMaxSing}.

In the proof of Theorem~\ref{Thm::MainThm} \ref{Item::MainThm::Bdd}, especially when $p<1$ or $q<1$, we need the following.
\begin{lemma}\label{Lem::Prem::ASumLemma}
   Let $0<p,q\le\infty$ and $\delta>0$.
   There are $C_{q,\delta}$ and $C_{p,q,\delta}>0$ such that
   \begin{gather}\label{Eqn::Prem::ASumLemma::Fs}
       \Big\|\Big(\sum_{j\in\Z}2^{-\delta|j-k|}g_j\Big)_{k\in\Z}\Big\|_{L^p(\ell^q)}\le C_{q,\delta}\|(g_j)_{j\in\Z}\|_{L^p(\ell^q)};
       \\\label{Eqn::Prem::ASumLemma::Bs}
       \Big\|\Big(\sum_{j\in\Z}2^{-\delta|j-k|}g_j\Big)_{k\in\Z}\Big\|_{\ell^q(L^p)}\le C_{p,q,\delta}\|(g_j)_{j\in\Z}\|_{\ell^q(L^p)}.
   \end{gather}
    for every sequence $g=(g_j)_{j\in\Z}:\Omega\subset\R^n\to\ell^q(\Z)$ of non-negative measurable functions.
\end{lemma}
See \cite[Lemma~2]{RychkovThmofBui}, in fact $C_{q,\delta}
       =\textstyle\big(\sum_{j\in\Z}2^{-\delta|j|\min(1,q)}\big)^{\min(1,q)^{-1}}$ and $C_{p,q,\delta}
       =\textstyle\big(\sum_{j\in\Z}2^{-\delta|j|\min(1,q)}\big)^{\frac{\min(1,p)}{\min(1,q)}}$.  Note that when $1\le p,q\le \infty$ (\ref{Eqn::Prem::ASumLemma::Fs}) and (\ref{Eqn::Prem::ASumLemma::Bs}) are just Young's inequalities.

\subsection{Proof of Theorem~\ref{Thm::MainThm} \ref{Item::MainThm::Bdd}}\label{Section::ProofThmBdd}

In Theorem~\ref{Thm::MainThm}, we assume that $\omega$ is a special Lipschitz domain and $\eta_j,\theta_j$ are supported in $-\Kb$. In the proof of Theorem~\ref{Thm::MainThm} \ref{Item::MainThm::Bdd}, the only place where these assumptions are needed is to show that $T^{\eta,\theta,r}_\omega$ is well-defined. 

More precisely, taking an extension $\tilde f\in\Ss'(\R^n)$ for $f\in\Ss'(\omega)$, we have $T^{\eta,\theta,r}_\omega f=\sum_{j=1}^\infty 2^{jr}\eta_j\ast(\1_\omega\cdot(\theta_j\ast \tilde f))$ and the value $ 2^{jr}\eta_j\ast(\1_\omega\cdot(\theta_j\ast \tilde f))$ does not depend on the choice of the extension.

Therefore by passing to their extensions we can consider a more general version of Theorem~\ref{Thm::MainThm} \ref{Item::MainThm::Bdd}:

\begin{prop}\label{Prop::Prem::TildeTBdd}
Let $r\in\R$, let $\eta=(\eta_j)_{j=1}^\infty,\theta=(\theta_j)_{j=1}^\infty\in\Gf$ and let $\Omega\subseteq\R^n$ be an arbitrary open set. Define the operator $\tilde T^{\eta,\theta,r}_\Omega$ as 
\begin{equation}\label{Eqn::Prem::TildeTBdd::DefTildeT}
    \tilde T^{\eta,\theta,r}_\Omega f:=\sum_{j=1}^\infty 2^{jr}\eta_j\ast(\1_\Omega\cdot(\theta_j\ast f)).
\end{equation}
Then
\begin{enumerate}[(i)]
    \item\label{Item::Prem::TildeTBdd::WellDef}$\tilde T^{\eta,\theta,r}_\Omega:\Ss'(\R^n)\to\Ss'(\R^n)$ is well-defined in the sense that for every $f\in\Ss'(\R^n)$, the sum \eqref{Eqn::Prem::TildeTBdd::DefTildeT} converges in $\Ss'(\R^n)$.
    \item \label{Item::Prem::TildeTBdd}
    For any $s\in\R$ and admissible $(p,q)$, there is a $C=C(\eta,\theta,p,q,r,s)>0$ that does not depend on $\Omega$, such that
    \begin{equation}\label{Eqn::Prem::TildeTBdd}
         \|\tilde T^{\eta,\theta,r}_\Omega  f\|_{\As_{pq}^{s-r}(\R^n)}\le C\| f\|_{\Bs_{pq}^{s}(\R^n)}.
    \end{equation}
\end{enumerate}
\end{prop}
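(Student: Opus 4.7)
The plan is to adapt the Littlewood--Paley strategy of Rychkov \cite{ExtensionLipschitz,RychkovThmofBui}, combining the Heideman-type decay estimates (\rp{Prop::Prem::Heideman} and \rc{Cor::Prem::HeidemanCor}), the Peetre maximal estimate (\rp{Prop::Prem::PeetreMax}), and the summation lemma (\rl{Lem::Prem::ASumLemma}). Fix a classical dyadic resolution $\la=(\la_k)_{k=0}^\infty\in\Cf$ and compute the $\As_{pq}^{s-r}$-norm via $\|(2^{k(s-r)}\la_k\ast(\cdot))_k\|$. Formally writing
$$\la_k\ast\tilde T^{\eta,\theta,r}_\Omega f=\sum_{j=1}^\infty 2^{jr}(\la_k\ast\eta_j)\ast\big(\1_\Omega\cdot(\theta_j\ast f)\big),$$
and using $|\1_\Omega|\le 1$ together with the defining bound $|(\theta_j\ast f)(x-y)|\le(\Pc^{\theta,N}_j f)(x)(1+2^j|y|)^N$, each summand is pointwise dominated by
$$\big|(\la_k\ast\eta_j)\ast(\1_\Omega(\theta_j\ast f))(x)\big|\le(\Pc^{\theta,N}_j f)(x)\int|\la_k\ast\eta_j(y)|(1+2^j|y|)^N\,dy.$$

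For $k\ge 1$, both $\la_k$ and $\eta_j$ have the form $2^{\ell n}g_0(2^\ell\cdot)$ with $g_0\in\Ss_0$, so \rc{Cor::Prem::HeidemanCor} yields $\int|\la_k\ast\eta_j(y)|(1+2^{\max(j,k)}|y|)^N\,dy\le C_{M,N}2^{-M|j-k|}$ for any $M$; for $k=0$ the same bound follows from \rp{Prop::Prem::Heideman} applied to $g=\la_0$. Using $(1+2^j|y|)^N\le(1+2^{\max(j,k)}|y|)^N$, multiplying by $2^{k(s-r)}$ and writing $2^{k(s-r)+jr}=2^{js}\cdot 2^{(k-j)(s-r)}$, I choose $M$ with $\delta:=M-|s-r|>0$ and set $g_j:=2^{js}\Pc^{\theta,N}_j f$ to obtain the master estimate
$$2^{k(s-r)}\big|\la_k\ast\tilde T^{\eta,\theta,r}_\Omega f(x)\big|\le C\sum_{j=1}^\infty 2^{-\delta|j-k|}g_j(x),\quad k\ge 0.$$
Taking $L^p(\ell^q)$ or $\ell^q(L^p)$ norms in $k$, \rl{Lem::Prem::ASumLemma} bounds the right-hand side by $C\|(g_j)\|_{L^p(\ell^q)}$ or $C\|(g_j)\|_{\ell^q(L^p)}$; choosing $N>n/\min(p,q)$, \rp{Prop::Prem::PeetreMax} identifies these with $\|f\|_{\Fs_{pq}^s(\R^n)}$ or $\|f\|_{\Bs_{pq}^s(\R^n)}$, proving \ref{Item::Prem::TildeTBdd}.

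Well-definedness \ref{Item::Prem::TildeTBdd::WellDef} will be established by directly estimating the pairing $\langle 2^{jr}\eta_j\ast(\1_\Omega\theta_j\ast f),\varphi\rangle=2^{jr}\int\1_\Omega(x)(\theta_j\ast f)(x)(\tilde\eta_j\ast\varphi)(x)\,dx$ for $\varphi\in\Ss(\R^n)$, where $\tilde\eta_j(y):=\eta_j(-y)$. Since $f$ has some finite order, there are $L,C_f\ge 0$ with $|(\theta_j\ast f)(x)|\le C_f 2^{jL}(1+|x|)^L$, while \rp{Prop::Prem::Heideman} applied to $\tilde\eta_j\ast\varphi$ furnishes $\int|\tilde\eta_j\ast\varphi(x)|(1+|x|)^L\,dx\lesssim_\varphi 2^{-Mj}$ for arbitrary $M$. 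Taking $M>r+L+n$ gives geometric decay in $j$ of the $j$-th term, so the series converges absolutely against every test function and $\tilde T^{\eta,\theta,r}_\Omega f$ is a well-defined tempered distribution.

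The main technical hurdle is the case $\min(p,q)<1$: \rp{Prop::Prem::PeetreMax} then forces $N>n/\min(p,q)$ to be large, and consequently $M$ in the Heideman step must exceed $N+|s-r|$ so that every decay factor beats every growth factor; this is harmless since \rc{Cor::Prem::HeidemanCor} supplies decay of arbitrarily high order. A minor subtlety is that $\la_0\notin\Ss_0$ (since $\widehat{\la_0}(0)=1$), so \rc{Cor::Prem::HeidemanCor} cannot be invoked for the pair $(\la_0,\eta_j)$; \rp{Prop::Prem::Heideman} with $g=\la_0$ fills this gap.
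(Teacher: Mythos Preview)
Your argument for part \ref{Item::Prem::TildeTBdd} matches the paper's proof essentially line for line: the same pointwise domination by Peetre maximal functions via \rc{Cor::Prem::HeidemanCor} (with \rp{Prop::Prem::Heideman} handling $k=0$), followed by \rl{Lem::Prem::ASumLemma} and \rp{Prop::Prem::PeetreMax}.

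For part \ref{Item::Prem::TildeTBdd::WellDef} the paper takes a slightly different route. Rather than fixing $f\in\Ss'$ and estimating the pairing directly using the finite order of $f$, the paper observes that the formal adjoint satisfies $(\tilde T^{\eta,\theta,r}_\Omega)^*=\tilde T^{\breve\theta,\breve\eta,r}_\Omega$ with $\breve\eta_j(x)=\eta_j(-x)$, $\breve\theta_j(x)=\theta_j(-x)$, which has the same structure. It then proves directly (via \rp{Prop::Prem::Heideman}) that $\tilde T^{\eta,\theta,r}_\Omega:\Ss(\R^n)\to\Ss(\R^n)$ is continuous for \emph{arbitrary} $\eta,\theta\in\Gf$; applying this to $(\breve\theta,\breve\eta)$ and dualizing gives $\tilde T^{\eta,\theta,r}_\Omega:\Ss'(\R^n)\to\Ss'(\R^n)$. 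Both arguments are correct and rest on the same Heideman decay; the paper's duality approach yields continuity of $\tilde T$ on $\Ss'$ as an automatic by-product, while yours is more direct for the bare convergence statement that is actually asked.
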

% \begin{rem}\label{Rmk::Prem::TildeBdd::OmegaNotDep}
% In the proof we will see that the constant in \eqref{Eqn::Prem::TildeTBdd} does not depend on $\Omega$. 
% \end{rem}

\begin{proof}The proof of \ref{Item::Prem::TildeTBdd::WellDef} is standard. We give a proof for completeness.

It suffices to show  that the adjoint operator $(\tilde T^{\eta,\theta,r}_\Omega)^*:\Ss(\R^n)\to\Ss(\R^n)$ is continuous, since $\tilde T^{\eta,\theta,r}_\Omega$ is the adjoint of $(\tilde T^{\eta,\theta,r}_\Omega)^*$ as well.

By direct computation, we see that
\begin{equation}\label{Eqn::Prem::PfWellDef::Duality}
    (\tilde T^{\eta,\theta,r}_\Omega)^* f=\sum_{j=1}^\infty 2^{jr}\breve\theta_j\ast(\1_\Omega \cdot (\breve\eta_j\ast f)),\quad f\in\Ss(\R^n),\quad\text{where }\breve\eta_j(x):=\eta_j(-x),\ \breve\theta_j(x):=\theta_j(-x).
\end{equation}

Clearly $\breve\eta=(\breve\eta_j)_{j=1}^\infty $ and $\breve\theta=(\breve\theta_j)_{j=1}^\infty$ are also in $\Gf$, and we have $(\tilde T^{\eta,\theta,r}_\Omega)^*=\tilde T^{\breve\theta,\breve\eta,r}_\Omega$. Since $\eta,\theta\in\Gf$ are arbitrary, if we can prove the boundedness $\tilde T^{\eta,\theta,r}_\Omega:\Ss(\R^n)\to\Ss(\R^n)$, then by replacing $(\eta,\theta)$ with $(\breve\theta,\breve\eta)$ we have $(\tilde T^{\eta,\theta,r}_\Omega)^*=\tilde T^{\breve\theta,\breve\eta,r}_\Omega:\Ss(\R^n)\to\Ss(\R^n)$, which concludes $\tilde T^{\eta,\theta,r}_\Omega:\Ss'(\R^n)\to\Ss'(\R^n)$. 

\medskip
We now prove $\tilde T^{\eta,\theta,r}_\Omega:\Ss(\R^n)\to\Ss(\R^n)$. Using Schwartz seminorms, it suffices to show that for every $\alpha,\beta\in\N^n$ there are $M=M_{\alpha,\beta,\eta,\theta,r}\in\Z_+$ and $C=C_{\eta,\theta,r,M}>0$ such that (recall the notion $\|f\|_M$ in \eqref{Eqn::Prem::Heideman::AxuNorm}),
\begin{equation}\label{Eqn::Prem::PfWellDef::SchwartzBdd}
    \sup\limits_{x\in\R^n}|x^\alpha\partial^\beta\tilde T^{\eta,\theta,r}_\Omega f(x)|\le C\|f\|_M=C\sum_{|\gamma|\le M}\sup\limits_{x\in\R^n}|(1+|x|)^M\partial^\gamma f(x)|,\quad\forall  f\in\Ss(\R^n).
\end{equation}

Indeed, for every $x\in\R^n$,
\begin{align*}
    |x^\alpha\partial^\beta\tilde T^{\eta,\theta,r}_\Omega f(x)|\le&\sum_{j=1}^\infty 2^{jr}|x|^{|\alpha|}\int_\Omega|\partial^\beta\eta_j(x-y)||\theta_j\ast f(y)|dy
    \\
    \le&\sum_{j=1}^\infty 2^{jr}\int_{\R^n}\frac{|x|^{|\alpha|}}{(1+2^j|y|)^{|\alpha|}}|\partial^\beta\eta_j(x-y)||\theta_j\ast f(y)|(1+2^j|y|)^{|\alpha|}dy
    \\
    \le&\sum_{j=1}^\infty 2^{jr}\int_{\R^n}(1+2^j|x-y|)^{|\alpha|}|\partial^\beta\eta_j(x-y)||\theta_j\ast f(y)|(1+2^j|y|)^{|\alpha|}dy
    \\
    \le&\sum_{j=1}^\infty 2^{jr}\|(1+2^j|y|)^{|\alpha|}\partial^\beta\eta_j(y)\|_{L^\infty_y(\R^n)}\|(1+2^j|y|)^{|\alpha|}(\theta_j\ast f)(y)\|_{L^1_y(\R^n)}
    \\
    \lesssim&_\eta\sum_{j=1}^\infty 2^{j(r+n+|\beta|)}\|(1+2^j|y|)^{|\alpha|}(\theta_j\ast f)(y)\|_{L^1(\R^n)}.
\end{align*}

Applying Proposition~\ref{Prop::Prem::Heideman} (see \eqref{Eqn::Prem::Heideman::Rmk}) with $M'=\max(r+n+|\beta|+1,|\alpha|)$, we have
\begin{equation*}
    \|(1+2^j|y|)^{|\alpha|}(\theta_j\ast f)(y)\|_{L^1_y}\le\|(1+2^j|y|)^M(\theta_j\ast f)(y)\|_{L^1_y}\lesssim_{M'}\|f\|_{2M'+n}2^{-jM'}\le\|f\|_{2M'+n}2^{-j(r+n+|\beta|)-j}.
\end{equation*}
Therefore
\begin{equation*}
    \textstyle\sup_{x\in\R^n}|x^\alpha\partial^\beta\tilde T^{\eta,\theta,r}_\Omega f(x)|\lesssim\sum_{j=1}^\infty2^{-j}\|f\|_{2M'+n}\le \|f\|_{2M'+n}.
\end{equation*}

Taking $M=2M'+n$, we get \eqref{Eqn::Prem::PfWellDef::SchwartzBdd}, finishing the proof of \ref{Item::Prem::TildeTBdd::WellDef}. 

\medskip
To prove \ref{Item::Prem::TildeTBdd}, we follow the arguments in \cite[Section 2]{ExtensionLipschitz}.
We fix a classical dyadic resolution $\lambda=(\lambda_k)_{k=0}^\infty\in\Cf$ to define the Besov and Triebel-Lizorkin (quasi-)norms. Then for every admissible $(p,q)$, \eqref{Eqn::Prem::TildeTBdd} can be written as 
\begin{align}\label{Eqn::Prem::PfThmBdd::BsCase}
    \big\|\big(2^{k(s-r)}\lambda_k\ast \tilde T^{\eta,\theta,r}_\Omega f\big)_{k=0}^\infty\big\|_{\ell^q(L^p)}\lesssim_{\eta,\theta,\lambda,p,q,r,s}\|f\|_{\Bs_{pq}^s(\R^n)};
    \\\label{Eqn::Prem::PfThmBdd::TLCase}
    \big\|\big(2^{k(s-r)}\lambda_k\ast \tilde T^{\eta,\theta,r}_\Omega f\big)_{k=0}^\infty\big\|_{L^p(\ell^q)}\lesssim_{\eta,\theta,\lambda,p,q,r,s}\|f\|_{\Fs_{pq}^s(\R^n)}.
\end{align}

Here the implicit constants in \eqref{Eqn::Prem::PfThmBdd::BsCase} and \eqref{Eqn::Prem::PfThmBdd::TLCase} do not depend on $\Omega$. 

We now prove \eqref{Eqn::Prem::PfThmBdd::BsCase} and \eqref{Eqn::Prem::PfThmBdd::TLCase}.
Taking $M=|s-r|+1$ to \eqref{Eqn::Prem::Heideman::Rmk} for $k=0$ with $g=\lambda_0$ and to Corollary \ref{Cor::Prem::HeidemanCor} for $k\ge1$ with $\eta_k=\lambda_k$, we have for $k\in\N$, $j\in\Z_+$ and $x\in\R^n$,
    \begin{equation}\label{Eqn::Prem::PfThmBdd::Tmp0}
            \begin{aligned}
        &2^{k(s-r)} 2^{jr}|\lambda_k\ast \eta_j\ast (\1_\Omega \cdot (\theta_j\ast  f))(x)|\le2^{k(s-r)} 2^{jr}\int_\Omega|\lambda_k\ast \eta_j(x-y)||\theta_j\ast f(y)|dy
        \\
        &\quad\le2^{k(s-r)} 2^{jr}\int_{\R^n}|\lambda_k\ast \eta_j(x-y)|(1+2^j|x-y|)^{N}\sup\limits_{t\in\R^n}\frac{|\theta_j\ast f(t)|}{(1+2^j|x-t|)^N}dy
        \\
        &\quad\lesssim_{\lambda,\eta,M,N} 2^{k(s-r)} 2^{jr}2^{-M|j-k|}(\Pc^{\theta,N}_j f)(x)\le 2^{-|j-k|}2^{js}(\Pc^{\theta,N}_j f)(x),
    \end{aligned}
    \end{equation}
    where in the last inequality we use $2^{-M|j-k|}=2^{-|s-r||j-k|}2^{-|j-k|}\le2^{(j-k)(s-r)}2^{-|j-k|}$.

    Taking sum of \eqref{Eqn::Prem::PfThmBdd::Tmp0} over $j\in\Z_+$ we have for $k\in\N$ and $x\in\R^n$,
    \begin{equation}\label{Eqn::Prem::PfThmBdd::Tmp1}
        2^{k(s-r)}|\lambda_k\ast\tilde T^{\eta,\theta,r}_\Omega  f(x)|\le\sum_{j=1}^\infty2^{k(s-r)} 2^{jr}|\lambda_k\ast \eta_j\ast (\1_\Omega\cdot(\theta_j\ast  f))(x)|\lesssim_{\eta,\lambda,r,s,N}\sum_{j=1}^\infty2^{-|j-k|}2^{js}\Pc_j^{\theta,N} f(x).
    \end{equation}
    
    Taking $N>n/\min(p,q)$, applying Lemma~\ref{Lem::Prem::ASumLemma} and Proposition~\ref{Prop::Prem::PeetreMax},
    \begin{align}
        \label{Eqn::Prem::PfThmBdd::Tmp2::Bs}
        \bigg\|\Big(\sum_{j=1}^\infty 2^{-|j-k|}2^{js}\Pc_j^{\theta,N} f\Big)_{k=0}^\infty\bigg\|_{\ell^q(L^p)}&\lesssim_{p,q}\Big\|(2^{ks}\Pc_k^{\theta,N} f)_{k=1}^\infty\Big\|_{\ell^q(L^p)}\lesssim_{\theta,p,q,s,N}\| f\|_{\Bs_{pq}^s(\R^n)}.
        \\
    \label{Eqn::Prem::PfThmBdd::Tmp2::TL}
        \bigg\|\Big(\sum_{j=1}^\infty 2^{-|j-k|}2^{js}\Pc_j^{\theta,N} f\Big)_{k=0}^\infty\bigg\|_{L^p(\ell^q)}&\lesssim_q\Big\|(2^{ks}\Pc_k^{\theta,N} f)_{k=1}^\infty\Big\|_{L^p(\ell^q)}\lesssim_{\theta,p,q,s,N}\| f\|_{\Fs_{pq}^s(\R^n)}.
    \end{align}
    
    Combining \eqref{Eqn::Prem::PfThmBdd::Tmp1} and \eqref{Eqn::Prem::PfThmBdd::Tmp2::Bs} we get \eqref{Eqn::Prem::PfThmBdd::BsCase}.
    Combining \eqref{Eqn::Prem::PfThmBdd::Tmp1} and \eqref{Eqn::Prem::PfThmBdd::Tmp2::TL} we get \eqref{Eqn::Prem::PfThmBdd::TLCase}. We finish the proof of \ref{Item::Prem::TildeTBdd}.
\end{proof}
\begin{rem}\label{Rmk::Prem::TildeBdd::OmegaNotDepExplain}
We recap why the constants in \eqref{Eqn::Prem::PfThmBdd::BsCase} and \eqref{Eqn::Prem::PfThmBdd::TLCase} do not depend on $\Omega$. In \eqref{Eqn::Prem::PfThmBdd::Tmp0}, the constants depend on $\lambda,\eta,r,s$ but not on $\Omega$; and after \eqref{Eqn::Prem::PfThmBdd::Tmp0}, the problem is reduced to estimating the $\ell^q(L^p)$-norm and $L^p(\ell^q)$-norm of the Peetre maximal functions, where $\Omega$ is not involved.
\end{rem}

\begin{rem}When $p\ge1$, the proof of \eqref{Eqn::Prem::PfThmBdd::BsCase} can be done without using the maximal functions.

Indeed, let $\mu_l=\sum_{k=l-1}^{l+1}\lambda_k$ as in Remark~\ref{Rmk::Prem::DyaResRmk}, we have $f=\sum_{l=0}^\infty \mu_l\ast\lambda_l\ast f$. By Corollary \ref{Cor::Prem::HeidemanCor} we have $\|\lambda_k\ast\eta_j\|_{L^1}\lesssim_M2^{-M|k-j|}$ and $\|\theta_j\ast\mu_l\|_{L^1}\lesssim_M2^{-M|j-l|}$ for all $M$. Thus by Young's inequality with $M>|s|+|r|+1$ we have
\begin{gather*}
2^{k(s-r)+jr}\|\lambda_k\ast\eta_j\ast(\1_{\Omega}\cdot(\theta_j\ast  f))\|_{L^p(\R^n)}\le2^{ks-(k-j)r}\|\lambda_k\ast\eta_j\|_{L^1}\|\theta_j\ast  f\|_{L^p(\Omega)}\lesssim2^{-|k-j|+js}\|\theta_j\ast  f\|_{L^p(\R^n)}.
\\
    2^{js}\|\theta_j\ast f\|_{L^p(\R^n)}\le\sum_{l=0}^\infty2^{js}\|\theta_j\ast\mu_l\ast\lambda_l\ast  f\|_{L^p}\le\sum_{l=0}^\infty2^{js}\|\theta_j\ast\mu_l\|_{L^1}\|\lambda_l\ast  f\|_{L^p}\lesssim \sum_{l=0}^\infty2^{-|j-l|}2^{ls}\|\lambda_l\ast  f\|_{L^p(\R^n)}.
\end{gather*}

Therefore,
\begin{align*}
    2^{k(s-r)}\|\lambda_k\ast\tilde T^{\eta,\theta,r}_\Omega f\|_{L^p(\R^n)}
        &\le\sum_{j=1}^\infty2^{ks}2^{-(k-j)r}\|\lambda_k\ast\eta_j\ast(\1_{\Omega}\cdot(\theta_j\ast  f))\|_{L^p(\R^n)}
        \\
        &\lesssim\sum_{j=1}^\infty 2^{-|k-j|}2^{js}\|\theta_j\ast  f\|_{L^p(\R^n)}
        \\
        &\lesssim\sum_{j=1}^\infty\sum_{l=0}^\infty2^{-|k-j|-|j-l|}2^{ls}\|\lambda_l\ast  f\|_{L^p(\R^n)}.
\end{align*}

    By using \eqref{Eqn::Prem::ASumLemma::Bs} in Lemma~\ref{Lem::Prem::ASumLemma} twice we get \eqref{Eqn::Prem::PfThmBdd::BsCase}.\hfill\qedsymbol
\end{rem}
Theorem~\ref{Thm::MainThm} \ref{Item::MainThm::Bdd} is then an immediate consequence to Proposition~\ref{Prop::Prem::TildeTBdd}.
\begin{proof}[Proof of Theorem~\ref{Thm::MainThm} \ref{Item::MainThm::Bdd}]
Let $\eta,\theta\in\Gf(-\Kb)$ satisfy the assumption of Theorem~\ref{Thm::MainThm}.

Suppose $\tilde f_1,\tilde f_2\in\Ss'(\R^n)$ satisfy $\tilde f_1|_\omega=\tilde f_2|_\omega$. By Proposition~\ref{Prop::Prem::TildeTBdd} \ref{Item::Prem::TildeTBdd::WellDef} we know $\tilde T^{\eta,\theta,r}_\Omega\tilde f_1,\tilde T^{\eta,\theta,r}_\Omega\tilde f_2\in\Ss'(\R^n)$ are well-defined. 
Since $\omega=\{x_n>\rho(x')\}$ is special Lipschitz, we have $|\nabla\rho|<1$, so $\omega+\Kb\subseteq\omega$ (see Remark~\ref{Rmk::Intro::RmkSpeDom}). Therefore $(\theta_j\ast\tilde f_1-\theta_j\ast\tilde f_2)|_\omega=0$ for all $j\in\Z_+$ and we have $\tilde T^{\eta,\theta,r}_\omega\tilde f_1=\tilde T^{\eta,\theta,r}_\omega\tilde f_2$.

For $f\in\Ss'(\omega)$, taking an extension $\tilde f\in\Ss'(\R^n)$ of $f$, we have $$T^{\eta,\theta,r}_\omega f=\tilde T^{\eta,\theta,r}_\omega\tilde f\in\Ss'(\R^n),$$ and the value does not depend on the choice of the extension. Therefore $T^{\eta,\theta,r}_\omega:\Ss'(\omega)\to\Ss'(\R^n)$ is well-defined.

By \eqref{Eqn::Prem::TildeTBdd} we have $\|T^{\eta,\theta,r}_\omega f\|_{\As_{pq}^{s-r}(\R^n)}=\|\tilde T^{\eta,\theta,r}_\omega \tilde f\|_{\As_{pq}^{s-r}(\R^n)}\lesssim\|\tilde f\|_{\As_{pq}^{s}(\R^n)}$ for all $s\in\R$ and admissible $(p,q)$. Taking the infimum over all extension $\tilde f$ for $f$ and using Definition~\ref{Defn::Prem::BsFsDef} we get $\|T^{\eta,\theta,r}_\omega f\|_{\As_{pq}^{s-r}(\R^n)}\lesssim \| f\|_{\As_{pq}^{s}(\omega)}$, concluding the proof.
\end{proof}
\begin{rem}
For Theorem~\ref{Thm::MainThm} \ref{Item::MainThm::Bdd}, we can relax the assumption $\|\nabla\rho\|_{L^\infty}<1$ in Definition~\ref{Defn::Intro::SpeLipDom} to $\|\nabla\rho\|_{L^\infty}\le1$. The proof does not need any change, since $\omega+\Kb\subseteq\omega$ is still valid for such $\omega=\{x_n>\rho(x')\}$. 

However, later in the proof of Theorem~\ref{Thm::MainThm} \ref{Item::MainThm::Smoothing}, the assumption $\|\nabla\rho\|_{L^\infty}<1$ becomes critical.
\end{rem}
\subsection{The zeroth term of $E_\omega$}

For the Rychkov's extension operator $E_\omega f=\sum_{j=0}^\infty\psi_j\ast(\1_\omega \cdot (\phi_j\ast f))$ (see \eqref{Eqn::Intro::ExtOp}), recall that we have $E_\omega f=\psi_0\ast(\1_\omega \cdot (\phi_0\ast f))+T^{\psi,\phi,0}_\omega f$ where $\psi=(\psi_j)_{j=1}^\infty,\phi=(\phi_j)_{j=1}^\infty\in\Gf(-\Kb)$ (see \eqref{Eqn::Intro::EfromT}). We now show that the map $f\mapsto \psi_0\ast(\1_\omega \cdot (\phi_0\ast f))$ gains derivatives of arbitrarily large order: 

\begin{prop}\label{Prop::Prem::BddE0}
\begin{enumerate}[(i)]
    \item\label{Item::Prem::BddE0::TildeF0} Let $\Omega\subseteq\R^n$ be an arbitrary open set, and let $\eta_0,\theta_0\in\Ss(\R^n)$. Let $\tilde F^{\eta_0,\theta_0}_\Omega$ be the linear operator defined by 
\begin{equation}\label{Eqn::Prem::BddTildeE0}
     \tilde F^{\eta_0,\theta_0}_\Omega f:=\eta_0\ast(\1_\Omega\cdot(\theta_0\ast f)).
\end{equation}

Then $\tilde F^{\eta_0,\theta_0}_\Omega:\Ss'(\R^n)\to\Ss'(\R^n)$ is well-defined. Furthermore, $\tilde F^{\eta_0,\theta_0}_\Omega:\As_{pq}^{s}(\R^n)\to\As_{pq}^{s+m}(\R^n)$ for $s\in\R$, $m>0$ and all admissible $(p,q)$. 
\item\label{Item::Prem::BddE0::F0} Let $\omega\subset\R^n$ be a special Lipschitz domain, and let $\eta_0,\theta_0\in\Ss(\R^n)$ satisfy $\supp\eta_0,\supp\theta_0\subset-\Kb$. Let $F^{\eta_0,\theta_0}_\omega$ be the linear operator defined by
\begin{equation}\label{Eqn::Prem::BddE0}
     F^{\eta_0,\theta_0}_\omega h:=\eta_0\ast(\1_\omega\cdot (\theta_0\ast h)).
\end{equation}

Then $F^{\eta_0,\theta_0}_\omega:\Ss'(\omega)\to\Ss'(\R^n)$ is well-defined. Furthermore, $ F^{\eta_0,\theta_0}_\omega:\As_{pq}^{s}(\omega)\to\As_{pq}^{s+m}(\R^n)$ for $s\in\R$, $m>0$ and all admissible $(p,q)$.
\end{enumerate}

\end{prop}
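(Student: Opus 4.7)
The strategy mirrors the proof of Proposition \ref{Prop::Prem::TildeTBdd} \ref{Item::Prem::TildeTBdd} but uses a different source of cancellation: whereas there each $\eta_j, \theta_j$ ($j \ge 1$) had vanishing moments, here $\eta_0, \theta_0$ are merely Schwartz. The gain of $m$ derivatives therefore must come from the frequency localization of the Littlewood-Paley pieces $\lambda_k$ tested against the rapid decay of $\widehat{\eta_0}$ at infinity. First, well-definedness of $\tilde F^{\eta_0,\theta_0}_\Omega:\Ss'(\R^n)\to\Ss'(\R^n)$ is immediate: for $f\in\Ss'$, $\theta_0\ast f$ is $C^\infty$ with polynomial growth, $\1_\Omega\cdot(\theta_0\ast f)$ is measurable of polynomial growth (hence in $\Ss'$), and convolving with $\eta_0\in\Ss$ yields a $C^\infty$ function of polynomial growth.

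For the boundedness in \ref{Item::Prem::BddE0::TildeF0}, fix $\lambda=(\lambda_k)_{k=0}^\infty\in\Cf$, so that $(\lambda_k)_{k=1}^\infty\in\Gf$. Applying Proposition \ref{Prop::Prem::Heideman} (specifically \eqref{Eqn::Prem::Heideman::Rmk}) with this sequence and $g=\eta_0$, for any $M\ge N\ge 0$,
\begin{equation*}
\int_{\R^n} |\lambda_k\ast\eta_0(z)|(1+|z|)^N\,dz \le \int |\lambda_k\ast\eta_0(z)|(1+2^k|z|)^M\,dz \lesssim_{\lambda,\eta_0,M}2^{-kM},\qquad k\ge 1,
\end{equation*}
and the case $k=0$ is trivial since $\lambda_0\ast\eta_0\in\Ss$. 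Fix $N>n/\min(p,q)$. Then for all $k\ge 0$ and $x\in\R^n$,
\begin{equation*}
|\lambda_k\ast\tilde F^{\eta_0,\theta_0}_\Omega f(x)| \le \bigg(\sup_{y\in\R^n}\frac{|\theta_0\ast f(y)|}{(1+|x-y|)^N}\bigg)\int|\lambda_k\ast\eta_0(x-y)|(1+|x-y|)^N\,dy\lesssim 2^{-kM}\Qc f(x),
\end{equation*}
where $\Qc f(x):=\sup_y|\theta_0\ast f(y)|/(1+|x-y|)^N$. Choosing $M>s+m$, the factor $2^{k(s+m-M)}$ is both $\ell^q$-summable in $k$ and bounded pointwise, so both $\|(2^{k(s+m)}\lambda_k\ast\tilde F f)_k\|_{\ell^q(L^p)}$ and $\|(2^{k(s+m)}\lambda_k\ast\tilde F f)_k\|_{L^p(\ell^q)}$ are controlled by $\|\Qc f\|_{L^p}$. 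By Proposition \ref{Prop::Prem::PeetreMax} \eqref{Eqn::Prem::PeetreMaxSing} applied to $\theta_0$ in place of $\eta_0$, $\|\Qc f\|_{L^p}\lesssim \|f\|_{\As^s_{pq}(\R^n)}$, concluding \ref{Item::Prem::BddE0::TildeF0}.

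Part \ref{Item::Prem::BddE0::F0} reduces to \ref{Item::Prem::BddE0::TildeF0} by exactly the argument used to derive Theorem \ref{Thm::MainThm} \ref{Item::MainThm::Bdd} from Proposition \ref{Prop::Prem::TildeTBdd}. Since $\supp\theta_0\subset -\Kb$ and $\omega+\Kb\subseteq\omega$ for a special Lipschitz domain, for any two extensions $\tilde f_1,\tilde f_2\in\Ss'(\R^n)$ of $f\in\Ss'(\omega)$ we have $\1_\omega\cdot\theta_0\ast(\tilde f_1-\tilde f_2)=0$, so $\tilde F^{\eta_0,\theta_0}_\omega\tilde f_1=\tilde F^{\eta_0,\theta_0}_\omega\tilde f_2$ and $F^{\eta_0,\theta_0}_\omega f:=\tilde F^{\eta_0,\theta_0}_\omega\tilde f$ is well-defined independent of the extension. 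The boundedness $F^{\eta_0,\theta_0}_\omega:\As^s_{pq}(\omega)\to\As^{s+m}_{pq}(\R^n)$ then follows from \ref{Item::Prem::BddE0::TildeF0} by taking the infimum over extensions in Definition \ref{Defn::Prem::BsFsDef}. The only nontrivial step is the quantitative estimate above, which is why I expect the main obstacle to be the clean application of \eqref{Eqn::Prem::Heideman::Rmk} to convert the Schwartz decay of $\widehat{\eta_0}$ and the high-frequency support of $\lambda_k$ into arbitrarily fast decay in $k$.
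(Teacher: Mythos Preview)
Your proof is correct and follows essentially the same route as the paper's: both use \eqref{Eqn::Prem::Heideman::Rmk} (applied to $(\lambda_k)_{k\ge1}\in\Gf$ and $g=\eta_0$) to obtain $\int|\lambda_k\ast\eta_0(z)|(1+|z|)^N\,dz\lesssim 2^{-kM}$, bound $|\lambda_k\ast\tilde F f(x)|$ by $2^{-kM}$ times the single-scale Peetre maximal function of $\theta_0\ast f$, and then invoke \eqref{Eqn::Prem::PeetreMaxSing}; part \ref{Item::Prem::BddE0::F0} is reduced to \ref{Item::Prem::BddE0::TildeF0} in both via the $\omega+\Kb\subseteq\omega$ support argument. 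Your exposition even makes explicit the inequality $(1+|z|)^N\le(1+2^k|z|)^M$ for $k\ge0$, $M\ge N$, which the paper uses tacitly.
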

\begin{rem}\label{Rmk::Prem::TildeF::OmegaNotDep}
\begin{enumerate}[(a)]
    \item We do not require $\eta_0$ and $\theta_0$ to have moment vanishing in this proposition.
    \item Similar to Remarks  \ref{Rmk::Prem::TildeBdd::OmegaNotDepExplain}, the operator norms of $\tilde F^{\eta_0,\theta_0}_\Omega$ do not depend on $\Omega$.
\end{enumerate} 

\end{rem}

\begin{proof}
\ref{Item::Prem::BddE0::TildeF0}: Clearly $[f\mapsto\theta_0\ast f]:\Ss'(\R^n)\to C^\infty\cap\Ss'(\R^n)$ is continuous, so $[f\mapsto\1_\Omega\cdot(\theta_0\ast f)]:\Ss'(\R^n)\to \Ss'(\R^n)$ is well-defined. By convoluting with $\eta_0$ we get the well-definedness to $\tilde F^{\eta_0,\theta_0}_\Omega:\Ss'(\R^n)\to\Ss'(\R^n)$.

Still let $\lambda=(\lambda_j)_{j=0}^\infty\in\Cf$ be the dyadic resolution that defines the Besov and Triebel-Lizorkin norms. Similar to \eqref{Eqn::Prem::PfThmBdd::Tmp0}, for every $M,N>0$ we have
\begin{align*}
    2^{j(s+m)}|\lambda_j\ast\eta_0\ast(\1_\Omega \cdot (\theta_0\ast f))(x)|&\le 2^{j(s+m)}\int_\Omega|\lambda_j\ast\eta_0(x-y)||\theta_0\ast f(y)|dy
    \\
    &\le 2^{j(s+m)}\int_{\R^n}|\lambda_j\ast\eta_0(x-y)|(1+|x-y|)^N\sup\limits_{t\in\Omega}\frac{|\theta_0\ast f(t)|}{(1+|x-t|)^N}dy
    \\
    &\overset{\eqref{Eqn::Prem::Heideman::Rmk}}{\lesssim}_{\lambda,\eta_0,M}2^{j(s+m)}2^{-Mj}\sup\limits_{t\in\R^n}\frac{|\theta_0\ast f(t)|}{(1+|x-t|)^N}.&
\end{align*}

Taking $M\ge s+m+1$ and $N>n/\min(p,q)$ from above and then taking sum over $j$ we have
\begin{equation}\label{Eqn::Prem::BddE0::Tmp1}
\begin{aligned}
    &\|\tilde F^{\eta_0,\theta_0}_\Omega f\|_{\Bs_{pq}^{s+m}(\R^n)}\approx_\lambda\|(2^{j(s+m)}\lambda_j\ast\tilde F^{\eta_0,\theta_0}_\Omega f)_{j=0}^\infty\|_{\ell^q(L^p)}
    \\
    \lesssim&_{s,m,N,\lambda,\eta_0}\Big\|\Big(2^{-j}\sup\limits_{t\in\R^n}\frac{|\theta_0\ast f(t)|}{(1+|\cdot-t|)^N}\Big)_{j=0}^\infty\Big\|_{\ell^q(L^p)}\lesssim_q\Big\|\sup\limits_{t\in\R^n}\frac{|\theta_0\ast f(t)|}{(1+|\cdot-t|)^N}\Big\|_{L^p(\R^n)}.
\end{aligned}
\end{equation}

Combining \eqref{Eqn::Prem::BddE0::Tmp1} and the first inequality in \eqref{Eqn::Prem::PeetreMaxSing} we get $\|\tilde F^{\eta_0,\theta_0}_\Omega f\|_{\Bs_{pq}^{s+m}(\R^n)}\lesssim\|f\|_{\Bs_{pq}^s(\R^n)}$, which proves the boundedness $\tilde F^{\eta_0,\theta_0}_\Omega:\Bs_{pq}^s(\R^n)\to\Bs_{pq}^{s+m}(\R^n)$.

Using the same argument as in \eqref{Eqn::Prem::BddE0::Tmp1} and by the second inequality in \eqref{Eqn::Prem::PeetreMaxSing} (also choosing $N>n/\min(p,q)$) we have 
\begin{equation*}
    \|\tilde F^{\eta_0,\theta_0}_\Omega f\|_{\Fs_{pq}^{s+m}(\R^n)}\lesssim_{s,m,N,\eta_0}\Big\|\Big(2^{-j}\sup\limits_{t\in\R^n}\frac{|\theta_0\ast f(t)|}{(1+|\cdot-t|)^N}\Big)_{j=0}^\infty\Big\|_{L^p(\ell^q)}\lesssim_q\Big\|\sup\limits_{t\in\R^n}\frac{|\theta_0\ast f(t)|}{(1+|\cdot-t|)^N}\Big\|_{L^p}\lesssim\|f\|_{\Fs_{pq}^s(\R^n)}.
\end{equation*}
Thus we prove boundedness $\tilde F^{\eta_0,\theta_0}_\Omega:\Fs_{pq}^s(\R^n)\to\Fs_{pq}^{s+m}(\R^n)$.

\medskip
\noindent\ref{Item::Prem::BddE0::F0}: Let $h\in\Ss'(\omega)$, since $\omega+\Kb\subseteq\omega$ and $\supp\theta_0\subset-\Kb$, we see that $\theta_0\ast h\in\Ss'(\omega)$ is defined. Taking an extension $\tilde h\in\Ss'(\R^n)$ of $h$, we have $F^{\eta_0,\theta_0}_\omega g=\tilde F^{\eta_0,\theta_0}_\omega\tilde h\in\Ss'(\R^n)$. So $F^{\eta_0,\theta_0}_\omega:\Ss'(\omega)\to\Ss'(\R^n)$ is well-defined.

The Besov and Triebel-Lizorkin boundedness of $F^{\eta_0,\theta_0}_\omega$ follows from the boundedness of $\tilde F^{\eta_0,\theta_0}_\omega$.
\end{proof}

\section{The Anti-derivatives: Proof of Theorem~\ref{Thm::MainThm} \ref{Item::MainThm::AntiDev}}\label{Section::AntiDev}
In this section we prove Theorem~\ref{Thm::MainThm} \ref{Item::MainThm::AntiDev}. 
We will use the homogeneous tempered distributions.
	\begin{note}
		We denote by $\dot\Ss'(\R^n)$  the dual space of $\dot\Ss(\R^n)$.
		
		For $s\in\R$ we use the \textit{Riesz potential} $(-\Delta)^\frac s2:\dot\Ss(\R^n)\to\dot\Ss(\R^n)$ given by $$(-\Delta)^\frac s2 f=(|2\pi\xi|^s\widehat f)^\vee.$$
	\end{note}
	
By \cite[Remark~2.5]{TriebelHomogeneous} we have  $\dot\Ss'(\R^n)=\Ss'(\R^n)/\{\text{polynomials}\}$. By \cite[Proposition~2.4]{TriebelHomogeneous}, for $f\in\dot\Ss'(\R^n)$, its Fourier transform $\widehat f$ makes sense as a (tempered) distribution on $\R^n\backslash\{0\}$.
	
	For a tempered distribution $h\in\Ss'(\R^n)$ and $s\in\R$, we denote by $(-\Delta)^\frac s2h\in\dot\Ss'(\R^n)$ the linear functional $\varphi\in\dot\Ss(\R^n)\mapsto\langle h,(-\Delta)^\frac s2\varphi\rangle_{\Ss',\Ss}$.

We first prove a lemma for homogeneous Littlewood-Paley families. 
	\begin{lemma}\label{Lem::AntiDev::S0Conv}
	Let $\eta_0\in\dot\Ss(\R^n)$ and $r\in\R$. Denote
\[
   \eta_j(x):=2^{jn}\eta_0(2^jx), \quad j  \in \Z. 
\]
Then
		\begin{enumerate}[(i)]
			\item\label{Item::AntiDev::S0Conv::WellDef} The sum $\sigma:=\sum_{j\in\Z}2^{jr}\eta_j$ converges in $\dot\Ss'(\R^n)$ and satisfies $|\xi|^{-r}\widehat\sigma(\xi)\in L^\infty(\R^n\backslash\{0\})$.
			\item\label{Item::AntiDev::S0Conv::AstS0}  If $f \in\dot\Ss(\R^n)$, then $\sigma\ast f = \sum_{j\in\Z}2^{jr} (\eta_j \ast f)$ converges in $\dot\Ss(\R^n)$. 
		\end{enumerate}
	\end{lemma}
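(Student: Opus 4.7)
The plan is to work on the Fourier side, where the scaling and moment-vanishing hypotheses on $\eta_0$ are cleanest. The preliminary observation I would record is the equivalence $\eta_0\in\Ss_0(\R^n)\iff\widehat\eta_0\in\Ss_0(\R^n)$: moment vanishing of $\eta_0$ is exactly infinite-order vanishing of $\widehat\eta_0$ at the origin. Combined with Schwartz decay at infinity, this gives the two-sided bound
\begin{equation*}
    |\partial^\alpha\widehat\eta_0(\zeta)|\le C_{\alpha,N}\min(|\zeta|^N,|\zeta|^{-N})
\end{equation*}
for every multi-index $\alpha$ and every $N\ge0$. Since $\widehat{\eta_j}(\xi)=\widehat\eta_0(2^{-j}\xi)$, the formal Fourier transform of $\sigma$ is the dyadic sum $\widehat\sigma(\xi)=\sum_{j\in\Z}2^{jr}\widehat\eta_0(2^{-j}\xi)$.

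For (i), I would first show absolute and locally uniform convergence of this series on $\R^n\setminus\{0\}$, along with all its derivatives. Fixing $\xi\neq0$, for $j\ge0$ one uses $|\widehat\eta_0(2^{-j}\xi)|\le C_N(2^{-j}|\xi|)^N$ with $N>r$, and for $j<0$ one uses $|\widehat\eta_0(2^{-j}\xi)|\le C_N(2^{-j}|\xi|)^{-N}$ with $N>-r$. Both geometric tails are summable against $2^{jr}$, and the same bounds apply to $\partial^\alpha\widehat\sigma$, so $\widehat\sigma$ is smooth on $\R^n\setminus\{0\}$. The index shift $j\mapsto j+1$ yields the scaling identity $\widehat\sigma(2\xi)=2^r\widehat\sigma(\xi)$, hence $|\xi|^{-r}\widehat\sigma(\xi)$ is continuous and homogeneous of degree zero, therefore bounded by its supremum on the unit sphere. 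For convergence of $\sigma$ as an element of $\Ss_0(\R^n)'$, fix $\varphi\in\Ss_0$, set $\check\varphi(\xi):=\widehat\varphi(-\xi)\in\Ss_0$, and write $\langle\eta_j,\varphi\rangle=\int\widehat\eta_0(2^{-j}\xi)\check\varphi(\xi)\,d\xi$ by Parseval. Since $|\widehat\sigma(\xi)\check\varphi(\xi)|\le C|\xi|^r|\check\varphi(\xi)|$ is integrable on all of $\R^n$ (infinite-order vanishing of $\check\varphi$ at $0$ kills the singularity of $|\xi|^r$, and Schwartz decay handles infinity), dominated convergence justifies swapping sum and integral, yielding $\langle\sigma,\varphi\rangle=\int\widehat\sigma(\xi)\check\varphi(\xi)\,d\xi$.

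For (ii), given $f\in\Ss_0$, the candidate Fourier transform of the sum is $\widehat\sigma(\xi)\widehat f(\xi)$. I would show this product lies in $\Ss_0$, so that $\sigma\ast f:=(\widehat\sigma\,\widehat f)^\vee\in\Ss_0$. Smoothness of $\widehat\sigma$ on $\R^n\setminus\{0\}$ combined with homogeneity gives $|\partial^\beta\widehat\sigma(\xi)|\le C_\beta|\xi|^{r-|\beta|}$. By Leibniz's rule plus the infinite-order vanishing of $\widehat f$ at $0$, every derivative $\partial^\alpha(\widehat\sigma\widehat f)(\xi)$ is bounded by $C_{\alpha,N}|\xi|^N$ near $0$ (for any $N$) and is Schwartz at infinity, so $\widehat\sigma\widehat f\in\Ss_0$. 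Convergence of the partial sums $\sum_{|j|\le N}2^{jr}(\eta_j\ast f)\to\sigma\ast f$ in the Schwartz topology of $\Ss_0$ then follows by applying the same two-sided tail estimate for $\widehat\eta_0$ (this time multiplied by the already-rapidly-vanishing factor $\widehat f$) to each Schwartz seminorm on the Fourier side; the tails $|j|>N$ produce bounds decaying geometrically in $N$.

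The main obstacle I anticipate is purely bookkeeping: handling the two tails $j\to+\infty$ and $j\to-\infty$ symmetrically. The positive tail is controlled by moment vanishing of $\eta_0$, while the negative tail is controlled by moment vanishing of the test function (or of $\widehat f$ in part (ii)). Once the analysis is organized around the Fourier self-duality $\Ss_0\leftrightarrow\Ss_0$, both tails admit the same geometric estimates and the remaining manipulations are routine.
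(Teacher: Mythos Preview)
Your proof is correct and follows essentially the same Fourier-side strategy as the paper: both exploit the two-sided estimate $|\partial^\alpha\widehat\eta_0(\zeta)|\lesssim_{\alpha,N}\min(|\zeta|^N,|\zeta|^{-N})$ coming from $\eta_0\in\Ss_0$ to sum the dyadic series. The main organizational difference is that the paper first reduces to the case $r=0$ by writing $2^{jr}\eta_j=(-\Delta)^{r/2}\theta_j$ with $\theta_0:=(-\Delta)^{-r/2}\eta_0\in\Ss_0$, whereas you treat general $r$ directly and instead invoke the dyadic scaling identity $\widehat\sigma(2\xi)=2^r\widehat\sigma(\xi)$ to obtain the $L^\infty$ bound on $|\xi|^{-r}\widehat\sigma(\xi)$. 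Your route is slightly more self-contained; the paper's reduction makes the $r=0$ computation marginally cleaner. One small imprecision: from $\widehat\sigma(2\xi)=2^r\widehat\sigma(\xi)$ you only get that $|\xi|^{-r}\widehat\sigma(\xi)$ is \emph{dyadic-periodic} (invariant under $\xi\mapsto 2\xi$), not genuinely homogeneous of degree zero; but dyadic periodicity together with continuity on $\R^n\setminus\{0\}$ still gives boundedness (restrict to the compact annulus $\{1\le|\xi|\le 2\}$), so the conclusion stands.
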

	In fact,  one can show that $f \mapsto\sum_{j\in\Z}2^{jr}\eta_j\ast f$ is a continuous map from $\dot\Ss(\R^n)$ to itself.  
	
	Note that by definition $(\eta_j)_{j=0}^\infty\in\Gf$. It is easy to see that $\sum_{j=0}^\infty 2^{jr}\eta_j\ast f$ converges in $\Ss(\R^n)$ when $f\in\Ss(\R^n)$.
	
	It is not hard to see that $\sum_{j\in\Z}2^{jr}\eta_j$ converges in $\Ss'(\R^n)$ when $r>-n$. However, in general we do not know whether it converges in $\Ss'(\R^n)$ for $r\le-n$, as its Fourier transform blows up in the order of $|\xi|^r$ when $\xi\to0$. For our application it is enough to show the convergence in $\dot\Ss'$.
	\begin{proof}
		It suffices to prove the case $r=0$. Suppose this case is done, let $\theta_0:=(-\Delta)^{-\frac r2}\eta_0\in\dot\Ss(\R^n)$ and $\theta_j(x)=2^{jn}\theta_0(2^jx)$ for $j\in\Z$, we have $2^{jr}\eta_j=(-\Delta)^\frac r2\theta_j$. Thus $(\sum_{j\in\Z}2^{jr}\eta_j)^\wedge=|2\pi \xi|^r(\sum_{j\in\Z}\theta_j)^\wedge$ and $\sum_{j\in\Z}2^{jr}\eta_j\ast f=(\sum_{j\in\Z}\theta_j)\ast(-\Delta)^\frac r2f$. The results \ref{Item::AntiDev::S0Conv::WellDef} and \ref{Item::AntiDev::S0Conv::AstS0} for $\sum_j2^{jr}\eta_j$ then follow.

		\medskip
		Now for the case $r=0$, the sum $\sum_{j\in\Z}\widehat\eta_j(\xi)$ converges to an element in $L^\infty(\R^n)$ since
		\begin{equation*}
		\sum_{j\in\Z}|\widehat\eta_j(\xi)|=\sum_{j\in\Z}|\widehat\eta_0(2^{-j}\xi)|\lesssim_{\eta_0}\sum_{j\in\Z}\min(2^{-j}|\xi|,2^j|\xi|^{-1})\le2\sum_{k\in\Z}2^{-|k|}<\infty,\quad\text{uniformly for }\xi\in\R^n.
		\end{equation*}
We see that $\sum_{j\in\Z}\eta_j$ defines an element that belongs to $\Ss'(\R^n)$ whose Fourier transform is $L^\infty$. In particular $\sigma\in\dot\Ss'(\R^n)$ is well-defined with $\widehat\sigma\in L^\infty(\R^n\backslash\{0\})$. This gives \ref{Item::AntiDev::S0Conv::WellDef}.
		
		\medskip
		To prove \ref{Item::AntiDev::S0Conv::AstS0}, we need to show that for any $f\in\dot\Ss(\R^n)$, $M\in\Z$ and $\alpha,\beta\in\N^n$, the following series converges uniformly for $\xi\in\R^n$:
		\begin{equation}\label{Eqn::AntiDev::ProofCase=1::SchwartzConv}
		|\xi|^M \sum_{j\in\Z}|\partial^\alpha\widehat\eta_j(\xi))||\partial^\beta\widehat f(\xi)|.
		\end{equation}
	By taking $M\to+\infty$ we see that $\sum_j\partial^\alpha\widehat\eta_j\cdot\partial^\beta\widehat f$ has rapid decay when  $|\xi| \to+\infty$. By taking $M\to-\infty$, we see that $\sum_j\partial^\alpha\widehat\eta_j\cdot\partial^\beta\widehat f$ has infinite order of vanishing at $\xi=0$.

		By $\widehat\eta_j(\xi)=\widehat\eta_0(2^{-j}\xi)$, we have $\partial^\alpha\widehat\eta_j(\xi)=2^{-j|\alpha|}(\partial^\alpha\widehat\eta_0)(2^{-j}\xi)$ for each $j\in\Z$.
		Since both $\partial^\beta\widehat f$ and $\partial^\alpha\widehat\eta_0$ have rapid decay as $\xi\to\infty$ and is flat at $\xi=0$, we have $\sup_{\xi\in\R^n}|\xi|^{\tilde M}|\partial^\beta\widehat f(\xi)|+\sup_{\xi\in\R^n}|\xi|^{\tilde M}|\partial^\alpha\widehat f(\xi)|<\infty$ for all $\tilde M\in\R$. Therefore, by taking $\tilde M=|\alpha|-r-1$ and $\tilde M=|\alpha|-r+1$ we have
		\begin{align*}
		&\sum_{j\in\Z}^\infty |\xi|^M|\partial^\alpha\widehat\eta_j(\xi)||\partial^\beta\widehat f(\xi)|
		\\
		=&\sum_{j\ge1} 2^{-j} |2^{-j}\xi|^{|\alpha|-1}|\partial^\alpha\widehat\eta_0(2^{-j}\xi)||\xi|^{M-|\alpha|+1}|\partial^\beta\widehat f(\xi)|
		+\sum_{j\le0}2^j|2^{-j}\xi|^{|\alpha|+1}|\partial^\alpha\widehat\eta_0(2^{-j}\xi)||\xi|^{M-|\alpha|-1}|\partial^\beta\widehat f(\xi)|
		\\
		\le&\||\zeta|^{|\alpha|-1}\partial^\alpha\widehat\eta_0(\zeta)\|_{C^0_\zeta}\||\zeta|^{M-|\alpha|+1}\partial^\beta\widehat f(\zeta)\|_{C^0_\zeta}\sum_{j\ge1}2^{-j}+\||\zeta|^{|\alpha|+1}\partial^\alpha\widehat\eta_0(\zeta)\|_{C^0_\zeta}\||\zeta|^{M-|\alpha|-1}\partial^\beta\widehat f(\zeta)\|_{C^0_\zeta}\sum_{j\le0}2^{j}<\infty.
		\end{align*}
		
		The convergence is uniform in $\xi$, thus we get \ref{Item::AntiDev::S0Conv::AstS0} and complete the proof.
	\end{proof}
	
	To construct the anti-derivatives, we need a lemma on the support condition.
	
\begin{lemma}\label{Lem::AntiDev::AFourierLemma}
	Suppose $u\in \dot\Ss(\R)$ is supported in $\R_+$, then $( 2\pi i\tau)^{-1}\widehat u(\tau)$ is the Fourier transform of the $\dot\Ss$-function $v(t):=\int_{\R_+}u(t-s)ds$.  Moreover $\supp v\subset\R_+$.
\end{lemma}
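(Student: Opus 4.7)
The plan is to compute $v$ explicitly via a substitution, verify that $v'=u$, and then identify $v$ with the inverse Fourier transform of $(2\pi i\tau)^{-1}\widehat u(\tau)$, which will simultaneously give the Schwartz and moment-vanishing properties for $v$.

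First I would make the substitution $r=t-s$ in the defining integral, which yields
\[
v(t)=\int_0^{\infty} u(t-s)\,ds=\int_{-\infty}^{t} u(r)\,dr.
\]
Since $\supp u\subset\R_+$, this immediately gives $v(t)=0$ for $t\le 0$, so $\supp v\subset\R_+$. Moreover $u\in\Ss_0(\R)$ has $\int_\R u=0$, so we may also write $v(t)=-\int_t^{\infty} u(r)\,dr$. Thus $v$ is a bounded continuous function vanishing at both ends of the real line, and direct differentiation of either representation shows $v'=u$ pointwise.

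Next I would identify $\widehat v$ by producing a Schwartz candidate with the prescribed Fourier transform. Define $w\in\Ss(\R)$ by $\widehat w(\tau):=(2\pi i\tau)^{-1}\widehat u(\tau)$. This is well-defined and Schwartz: because $u\in\Ss_0(\R)$, by Definition \ref{Defn::Intro::S0Space} the Fourier transform $\widehat u$ vanishes to infinite order at $0$, so $\widehat w$ extends smoothly through $\tau=0$ with $\widehat w(\tau)=O(|\tau|^N)$ for every $N$, and it inherits rapid decay at infinity from $\widehat u$. In particular $\widehat w\in\Ss_0(\R)$, hence $w=(\widehat w)^{\vee}\in\Ss_0(\R)$. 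Since $(w')^{\wedge}(\tau)=(2\pi i\tau)\widehat w(\tau)=\widehat u(\tau)$, we have $w'=u$.

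Finally, $v$ and $w$ both satisfy the ODE $y'=u$ on $\R$, so $v-w$ is a constant. Since $w\in\Ss_0(\R)$ decays at $-\infty$ and $v(t)=0$ for $t\le 0$, this constant vanishes, giving $v=w\in\Ss_0(\R)$ and $\widehat v(\tau)=(2\pi i\tau)^{-1}\widehat u(\tau)$. The only nontrivial point is the smoothness of $\widehat w$ at the origin, which is exactly what the infinite-order moment vanishing of $u$ is designed to supply; everything else is a routine substitution and uniqueness-for-antiderivatives argument.
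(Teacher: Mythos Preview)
Your proof is correct and takes a genuinely different route from the paper. The paper computes the inverse Fourier transform of $(2\pi i\tau)^{-1}\widehat u$ directly by convolving $u$ with $\tfrac12\operatorname{sgn}$ (using the identity $(\operatorname{p.v.}(2\pi i\tau)^{-1})^\vee=\tfrac12\operatorname{sgn}$) and then simplifying the resulting expression via $\int u=0$ to recover $v$. You instead rewrite $v(t)=\int_{-\infty}^t u$ by substitution, read off $v'=u$ from the fundamental theorem of calculus, construct the Schwartz candidate $w$ with the prescribed Fourier transform, note $w'=u$, and conclude $v=w$ by uniqueness of antiderivatives vanishing at $-\infty$. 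Your argument is slightly more elementary in that it sidesteps the principal-value distribution identity; the paper's argument is more direct in that it identifies $v$ in one stroke from the Fourier side without an auxiliary function.

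One small notational slip: from $\widehat w\in\Ss(\R)$ together with $\widehat w(\tau)=O(|\tau|^\infty)$ you should conclude $w\in\Ss_0(\R)$ directly via Definition~\ref{Defn::Intro::S0Space}. The intermediate claim ``$\widehat w\in\Ss_0(\R)$'' would instead mean $\widehat{\widehat w}(\tau)=w(-\tau)=O(|\tau|^\infty)$, which you have not shown and do not need; the space $\Ss_0$ is not preserved by the Fourier transform in general.
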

% In this case we can say $v=(\frac d{dt})^{-1}u$ without ambiguity.
\begin{proof}
	By assumption $\widehat u\in \Ss(\R)$ satisfies $\partial^\alpha\widehat u(0)=0$ for all $\alpha\in\N^n$, so $\tau\mapsto( 2\pi i\tau)^{-1}\widehat u(\tau)$ is a well-defined smooth function that has infinite order vanishing at $\tau=0$. Note that $( 2\pi i\tau)^{-1}\widehat u$ still has rapid decay, therefore $\big((2\pi i\tau)^{-1}\widehat u\big)^\vee\in\dot\Ss(\R)$.
	
	Since $\big(\operatorname{p.v.}( 2\pi i\tau)^{-1}\big)^\vee(t)=\frac12\operatorname{sgn}(t)$, we have 
	\begin{align*}
	\big((2\pi i\tau)^{-1}\widehat u(\tau)\big)^\vee(t)&=\frac12\operatorname{sgn}\ast u(t) =\frac12 \int_{\R_+}u(t-s)ds - \frac12\int_{\R_-}u(t-s)ds
	\\
	&=\int_{\R_+}u(t-s)ds-\frac12\int_\R u(t-s)ds=\int_{\R_+}u(t-s)ds=v(t).
	\end{align*}
	Here we use the fact that $\int_\R u(t-s)ds=\int_\R u=0$ for all $x\in\R$. Therefore $\widehat v(\tau)=( 2\pi i\tau)^{-1}\widehat u(\tau)$, in particular $v\in\dot\Ss(\R)$.

	Since $\supp u\subset\R_+$, we have $\supp u\subseteq[\eps_0,+\infty)$ for some $\eps_0>0$. Therefore $\int_{\R_+}u(t-s)ds=0$ when $t<\eps_0$, in particular $\supp v\subset\R_+$ as well.
\end{proof}
We now begin the proof of Theorem~\ref{Thm::MainThm} \ref{Item::MainThm::AntiDev} with the case $m=1$. 
\begin{prop} \label{Prop::AntiDev::Case=1} 
	Let $K\subset\R^n$ be a convex cone centered at $0$.
	There are $\sigma_1=\sigma^K_1,\dots,\sigma_n=\sigma^K_n\in \dot\Ss'(\R^n)$ that satisfy the following:
	\begin{enumerate}[(i)]
		\item\label{Item::AntiDev::Case=1::Schwartz} For $f\in\dot\Ss(\R^n)$, we have $\sigma_l\ast f\in\dot\Ss(\R^n)$, $l=1,\dots,n$. 
		
		\item\label{Item::AntiDev::Case=1::Support}
		If $f\in\dot\Ss(\R^n)$ is supported in $K$, then $\supp(\sigma_l\ast f)\subset K$ ($l=1,\dots,n$) as well. 
		\item\label{Item::AntiDev::Case=1::Sum} For $f\in\dot\Ss(\R^n)$,   $f=\sum_{l=1}^n\partial_{x_l}(\sigma_l\ast f)$.
	\end{enumerate}
\end{prop}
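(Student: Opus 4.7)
\medskip
\noindent\textbf{Proof proposal.} My plan is to construct each $\sigma_l$ as a homogeneous Littlewood--Paley sum of $K$-supported Schwartz building blocks, using Lemma \ref{Lem::AntiDev::S0Conv} to ensure convergence in $\Ss_0'$ and Lemma \ref{Lem::AntiDev::AFourierLemma} to manufacture the required $K$-supported anti-derivatives. The first step is to fix a direction $e$ in the interior of $K$ and perform a rotation so that $e=e_n$; under this rotation the hypotheses carry over and at the end we revert to the original coordinates, with the resulting $\sigma_l$ related by the rotation matrix. Next, appealing to a Rychkov-type construction (cf.\ \cite[Theorem 4.1(a)]{ExtensionLipschitz}, or an analog of the paper's Convention \ref{Conv::Intro::KDyaPair} adapted to the cone $K$), I would produce $\phi,\psi\in\Ss_0(\R^n)$ with $\supp\phi,\supp\psi\subset K$ satisfying a Calder\'on-type reproducing identity so that $\delta_0=\sum_{j\in\Z}\psi_j*\phi_j$ holds in $\Ss_0'$.

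\medskip
The key reduction is to arrange for $\phi$ to factor as $\phi=\partial_n\zeta$ with $\zeta\in\Ss_0(\R^n)$ supported in $K$. This is where Lemma \ref{Lem::AntiDev::AFourierLemma} enters: applied fiberwise along the $x_n$-direction to a $\phi$ whose marginal $\int_\R\phi(x',x_n)\,dx_n$ vanishes identically, it yields
$\zeta(x',x_n):=\int_{-\infty}^{x_n}\phi(x',t)\,dt\in\Ss_0(\R^n)$, and the $K$-support of $\zeta$ follows from $\supp\phi\subset K$ combined with the marginal vanishing (for $(x',x_n)\notin K$, the partial integral equals the full integral, hence $0$). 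Using the scaling $\phi_j=2^{-j}\partial_n\zeta_j$, where $\zeta_j(x):=2^{jn}\zeta(2^jx)$, the reproducing formula becomes
\[
\delta_0=\sum_{j\in\Z}\psi_j*\phi_j=\partial_n\Bigl(\sum_{j\in\Z}2^{-j}\psi_j*\zeta_j\Bigr)=\partial_n\sigma_n,
\]
where $\sigma_n:=\sum_{j\in\Z}2^{-j}(\psi*\zeta)_j$; Lemma \ref{Lem::AntiDev::S0Conv} applied to $\psi*\zeta\in\Ss_0(\R^n)$ (supported in $K+K\subset K$ by convexity and $0\in K$) gives $\sigma_n\in\Ss_0'$ and $\sigma_n*f\in\Ss_0$ for every $f\in\Ss_0$, proving (i). Setting $\sigma_l=0$ for $l<n$, property (iii) is immediate from $\partial_n\sigma_n=\delta_0$, and (ii) follows because each $(\psi*\zeta)_j*f$ has support in $K+K\subset K$ when $\supp f\subset K$, a property preserved under the $\Ss_0$-convergent sum.

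\medskip
The main obstacle, and the place where the proof requires care, is the simultaneous compatibility of the factorization $\phi=\partial_n\zeta$ (which forces $\hat\phi$ to vanish on the hyperplane $\xi_n=0$) with the reproducing identity $\sum_j\hat\psi(2^{-j}\xi)\hat\phi(2^{-j}\xi)=1$ on $\R^n\setminus\{0\}$ (which forbids $\hat\psi\hat\phi$ from vanishing along rays from the origin lying in $\{\xi_n=0\}$). My expected resolution is to work not with a single anti-derivative direction but with all of $e_1,\ldots,e_n$ or equivalently to allow the reproducing identity to hold only modulo terms that are annihilated by convolution with $\Ss_0$ test functions; more concretely, the compensating contributions can be packaged into the $\sigma_l$ for $l<n$ through a recursive/inductive construction that distributes the divergence across all coordinates, each piece being built by the same homogeneous Littlewood--Paley mechanism applied in the appropriate direction in $K$. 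The verification of (i)--(iii) would then reduce to the same combination of Lemma \ref{Lem::AntiDev::S0Conv} and Lemma \ref{Lem::AntiDev::AFourierLemma} as above, applied coordinate by coordinate.
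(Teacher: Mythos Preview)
Your proposal correctly identifies the central obstruction: a single-direction factorization $\phi=\partial_n\zeta$ forces $\hat\phi$ to vanish on $\{\xi_n=0\}$, which is incompatible with the full reproducing identity $\sum_j\hat\psi(2^{-j}\xi)\hat\phi(2^{-j}\xi)=1$ on $\R^n\setminus\{0\}$. But your proposed resolution in the final paragraph is not a proof; saying you will ``distribute the divergence across all coordinates'' through a ``recursive/inductive construction'' does not specify any mechanism that makes the resulting divergence sum equal to $\delta_0$. This is exactly the missing idea.

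There is also a preliminary gap in your fiberwise application of Lemma~\ref{Lem::AntiDev::AFourierLemma}: for $\zeta(x',x_n)=\int_{-\infty}^{x_n}\phi(x',t)\,dt$ to be Schwartz you need the pointwise marginal vanishing $\int_\R\phi(x',t)\,dt=0$ for \emph{every} $x'$, which does not follow from $\phi\in\Ss_0(\R^n)$ alone.

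The paper sidesteps both issues by abandoning the $n$-dimensional Calder\'on pair $(\phi,\psi)$ entirely and working instead with one-dimensional building blocks. After a linear change so that $(0,\infty)^n\subset K$, one takes a single $g\in\Ss(\R)$ supported in $[1,\infty)$ with $\int g=1$ and all higher moments vanishing, sets $u_j(t)=2^jg(2^jt)-2^{j-1}g(2^{j-1}t)\in\Ss_0(\R)$, and defines $\mu_{l,j}$ as the tensor product
\[
\mu_{l,j}(x)=\Big(\prod_{k<l}2^jg(2^jx_k)\Big)\,(\partial_t^{-1}u_j)(x_l)\,\Big(\prod_{k>l}2^{j-1}g(2^{j-1}x_k)\Big),
\]
with $\sigma_l=\sum_{j\in\Z}\mu_{l,j}$. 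The point is purely algebraic: applying $\partial_{x_l}$ and summing over $l$ yields the telescoping product identity $\prod_k\hat g(2^{-j}\xi_k)-\prod_k\hat g(2^{1-j}\xi_k)$, and summing over $j$ collapses to $1$. Lemma~\ref{Lem::AntiDev::AFourierLemma} is applied genuinely in one dimension (to $u_j$), and the support in $[0,\infty)^n\subset K$ is immediate from the tensor structure. This telescoping-product device is the concrete replacement for your unspecified ``recursive construction.''
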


Note that Proposition~\ref{Prop::AntiDev::Case=1} is straightforward if we do not impose condition \ref{Item::AntiDev::Case=1::Support}. For example we have $f=\Delta\Delta^{-1}f=\sum_{l=1}^n\partial_{x_l}(\partial_{x_l}G\ast f)$ for $f\in\dot\Ss(\R^n)$, where $G$ is the fundamental solution to the Laplacian. But  $\partial_{x_l}G$ itself cannot be supported in any $\overline{K}$ unless $K=\R^n$.

\begin{proof}[Proof of Proposition~\ref{Prop::AntiDev::Case=1}]
	By passing to an invertible linear transformation and taking  suitable linear combinations, we can assume that  $K\supseteq(0,\infty)^n$. Thus by convexity we have $K+[0,\infty)^n\subseteq K$. 
	
	Let $g(t)$ be the function as in the proof of \cite[Theorem~4.1]{ExtensionLipschitz}. That is, we have a $g \in \Ss (\R)$ satisfying:
	$$\supp g \subseteq [1, \infty);\quad \int_{\R} g = 1,\text{ and }\int_{\R} t^k g(t)dt = 0\quad\text{for all }k \in \Z_+.$$
	
	Therefore
	$$u_j(t):=2^{j} g (2^jt) - 2^{j-1} g (2^{j-1}t),\quad j\in\Z,$$ 
	are Schwartz functions supported in $\R_+$ and have integral $0$. By the moment condition of $g$ (we have $\int t^kg(t)dt=0$, $k\ge1$) we have $\int t^ku_j(t)dt=0$ for all $k\ge1$ as well. Therefore $u_j\in\dot\Ss(\R)$. By Lemma~\ref{Lem::AntiDev::AFourierLemma}, $\left( \frac{d}{dt} \right)^{-1} u_j\in\dot\Ss(\R)$ is a well-defined moments vanishing Schwartz function supported in $\R_+$.
	
	For $l=1,\dots,n$ and $j\in\Z$, we define $\mu_{l,j}\in\Ss(\R^n)$ as
	\begin{equation}\label{Eqn::AntiDev::ProofCase=1::DefMu}
	\mu_{l,j}(x)
	:=\Big(\prod_{k=1}^{l-1}2^jg(2^jx_k)\Big)\cdot (\partial_t^{-1}u_j)(x_l)\cdot
	\Big(\prod_{k=l+1}^n2^{j-1}g(2^{j-1}x_k)\Big). %,\quad 1\le l\le n,\quad j\in\Z.
	\end{equation}
	
	Taking Fourier transform of \eqref{Eqn::AntiDev::ProofCase=1::DefMu} we have
	\begin{align} \label{Eqn::AntiDev::ProofCase=1::MuFourier1}
	\widehat \mu _{l,j} ( \xi)
	&= \widehat{g} (2^{-j} \xi_1) \cdots \widehat{g}  (2^{-j} \xi_{l-1})   
	\frac{\widehat{g} (2^{-j} \xi_l)  - \widehat{g} (2^{1-j} \xi_l )}{2\pi i\xi_l }
	\widehat{g} (2^{1-j} \xi_{l+1}) \cdots \widehat{g}  (2^{ 1-j} \xi_n)
	\\\label{Eqn::AntiDev::ProofCase=1::MuFourier2} &= 2^{-j}\widehat\mu_{l,0}(2^{-j}\xi).
	\end{align}
Since $\widehat g(\tau)=1+O(\tau^\infty)$ as $\tau\to0$, from \eqref{Eqn::AntiDev::ProofCase=1::MuFourier1} we have
$\widehat\mu_{l,0}(\xi)=O(|\xi_l|^\infty)$ as $\xi\to0$. In particular $\widehat\mu_{l,0}(\xi)= O(|\xi|^\infty)$ have infinite order vanishing for each $l$. We conclude that $\mu_{l,0}\in\dot\Ss(\R^n)$, so are $\mu_{l,j}(x)= 2^{j (n-1) } \mu_{l,0} (2^jx)$.

	For $l = 1, \dots, n$, we define $\sigma_l$ by 
	\begin{equation}\label{Eqn::AntiDev::ProofCase=1::DefTheta}
	\sigma_l:=\sum_{j\in\Z}\mu_{l,j}.
	\end{equation} 

For each $l=1,\dots,n$ applying Lemma~\ref{Lem::AntiDev::S0Conv} \ref{Item::AntiDev::S0Conv::WellDef} with $\eta_j(x):=2^j\mu_{l,j}(x)=2^{jn}\mu_{l,0}(2^jx)$ and $r=-1$, we know $\sigma_l\in\dot\Ss'(\R^n)$ is defined. 
By Lemma~\ref{Lem::AntiDev::S0Conv} \ref{Item::AntiDev::S0Conv::AstS0}, we see that $\sigma_l\ast f=\sum_{j\in\Z}\mu_{l,j}\ast f$ converges $\dot\Ss(\R^n)$, which gives \ref{Item::AntiDev::Case=1::Schwartz}.

Since $g(t) $ and $\partial_t^{-1}u_j(t)$ are both supported in $\R_+$, by \eqref{Eqn::AntiDev::ProofCase=1::DefMu} we have $\supp\mu_{l,j}\subset[0,\infty)^n\subseteq K$.
	If $f\in\dot\Ss(\R^n)$ is supported in $K$, set $V:= K+\supp f$, then we have $V+K=V $ and $\overline{V}\subsetneq K$. Now $\supp(\mu_{l,j}\ast f)\subset V$ for each $j$, so by taking sum over $j\in\Z$ we get $\supp(\sigma_l\ast f)\subseteq\overline{V}\subset K$. We obtain \ref{Item::AntiDev::Case=1::Support}. 
	
	\medskip
	Finally we prove \ref{Item::AntiDev::Case=1::Sum}. By \eqref{Eqn::AntiDev::ProofCase=1::DefTheta} and \eqref{Eqn::AntiDev::ProofCase=1::MuFourier1} we have, for $\xi\neq0$,
	\begin{align*}
	\sum_{l=1}^n (\partial_{x_l}\sigma_l)^\wedge(\xi)
	&=\sum_{l=1}^n\sum_{j\in\Z}2\pi i\xi_l\widehat\mu_{l,j}(\xi)
	\\
	&=\sum_{l=1}^n\sum_{j\in\Z}\widehat{g} (2^{-j} \xi_1) \cdots \widehat{g}  (2^{-j} \xi_{l-1})   
	\big(\widehat{g} (2^{-j} \xi_l)  - \widehat{g} (2^{1-j} \xi_l )\big)
	\widehat{g} (2^{1-j} \xi_{l+1}) \cdots \widehat{g}  (2^{ 1-j} \xi_n)
	\\
	&=\sum_{j\in\Z}\widehat{g} (2^{-j} \xi_1) \cdots \widehat g(2^{-j}\xi_n)-\widehat{g} (2^{1-j} \xi_1) \cdots \widehat g(2^{1-j}\xi_n)
	\\
	&=\lim\limits_{J\to+\infty}\widehat g (2^{-J} \xi_1) \cdots \widehat g(2^{-J}\xi_n)-\widehat{g} (2^J \xi_1) \cdots \widehat g(2^J\xi_n)=1.
	\end{align*}
	Here we use $\widehat g(0)=1$ and $\lim_{\tau\to\pm\infty}\widehat g(\tau)=0$.
	
	So for $f\in\dot\Ss(\R^n)$, $\widehat f(\xi)=\sum_{l=1}^n (\partial_{x_l}\sigma_l)^\wedge(\xi)\widehat f(\xi)$ holds pointwisely for $\xi\in\R^n$. Taking Fourier inverse we get $f=\sum_{l=1}^n\partial_{x_l} (\sigma_l\ast f) $, giving \ref{Item::AntiDev::Case=1::Sum} and conclude the whole proof.
\end{proof}

We are now ready to prove Theorem~\ref{Thm::MainThm} \ref{Item::MainThm::AntiDev}. 

\begin{proof}[Proof of Theorem~\ref{Thm::MainThm} \ref{Item::MainThm::AntiDev}]
	Fix a convex cone $K\subset\R^n$ temporarily, and let $\sigma_1,\dots,\sigma_n$ be as in the assumption of Proposition~\ref{Prop::AntiDev::Case=1}.
	
	For $f\in\dot\Ss(\R^n)$, taking Fourier transform of $f=\sum_{l=1}^n\partial_{x_l}(\sigma_l\ast f)$ we have $\widehat f(\xi)=\sum_{l=1}^n2\pi i\xi_l\widehat\sigma_l(\xi)\widehat f(\xi)$. Taking the same sum $(m-1)$-more times, we have for every $\xi\in\R^n$ pointwise: 
	\begin{equation}\label{Eqn::AntiDev::ProofCase>1::SumEqn}
	\widehat f(\xi)=\prod_{k=1}^{m}\sum_{l_k=1}^n2\pi i\xi_{l_k}\widehat\sigma_{l_k}(\xi)\widehat f(\xi)=\sum_{|\alpha|=m}\alpha!\prod_{l=1}^n( 2\pi i\xi_l)^{\alpha_l}\widehat\sigma_l(\xi)^{\alpha_l}\widehat f(\xi).
	\end{equation}
	
		Thus we can write $f=\sum_{|\alpha|=m}\partial^\alpha(\sigma_\alpha\ast f)$, where for each $\alpha\in\N^n$ with $|\alpha|=m$ the function $\sigma_\alpha \ast f$ is given by 
		\begin{align}\notag
		\sigma_\alpha\ast f&\textstyle=\alpha!\big(\prod_{l=1}^n\widehat\sigma_l(\xi)^{\alpha_l}\widehat f(\xi)\big)^\vee
		\\
		\label{Eqn::AntiDev::ProofCase>1::SigmaAlphaAstf}&=\alpha!\cdot\underbrace{\sigma_1\ast(\dots\ast(\sigma_1}_{\alpha_1\text{ times}}\ast(\dots\ast (\underbrace{\sigma_n\ast(\dots\ast(\sigma_n}_{\alpha_n\text{ times}}\ast f)\dots).
		\end{align}
		
		By using Proposition~\ref{Prop::AntiDev::Case=1} \ref{Item::AntiDev::Case=1::Schwartz} and \ref{Item::AntiDev::Case=1::Support} recursively in \eqref{Eqn::AntiDev::ProofCase>1::SigmaAlphaAstf}, we have for each $\alpha\in\N^n$ with $|\alpha| = m$, 
		\begin{enumerate}[label=(\alph*)]
		    \item\label{Item::AntiDev::PfAtd::ThetaAlpSchwartz} For $f\in\dot\Ss(\R^n)$, we have $\sigma_\alpha\ast f\in\dot\Ss(\R^n)$. 
			\item \label{Item::AntiDev::PfAtd::ThetaAlpSupp} If $f\in\dot\Ss(\R^n)$ is supported in $K$, then $\supp(\sigma_\alpha\ast f)\subset K$ as well.
		\end{enumerate}
		
	Now we take $K=-\Kb$ in Proposition~\ref{Prop::AntiDev::Case=1}. We shall only construct $\tilde\theta^\alpha=(\tilde\theta^\alpha_j)_{j=1}^\infty \in \Gf(-\Kb)$, since the construction and the proof for $\tilde \eta^\alpha_1$ are the same. 
	
	Define $\tilde\theta^\alpha=(\tilde\theta^\alpha_j)_{j=1}^\infty$ by 
	\begin{equation}\label{Eqn::AntiDev::PfAtd::DefTildeEta}
	\tilde \theta^\alpha_1(x):=2^{m}\sigma_\alpha\ast\theta_1(x),\quad \tilde\theta^\alpha_j(x):=2^{(j-1)n}\tilde\theta^\alpha_1(2^{j-1}x),\quad j>1.
	\end{equation}
	Clearly $\tilde\theta^\alpha$ depends only on $\alpha$ and $\theta$.
	By property \ref{Item::AntiDev::PfAtd::ThetaAlpSchwartz} we have $\tilde\theta^\alpha_1\in\dot\Ss(\R^n)$. By property \ref{Item::AntiDev::PfAtd::ThetaAlpSupp} we have $\supp\tilde\theta_1^\alpha\subset-\Kb$. Therefore $(\tilde\theta^\alpha_j)_{j=1}^\infty\in\Gf(-\Kb)$. 
	
 In view of \eqref{Eqn::AntiDev::ProofCase>1::SigmaAlphaAstf} and  \eqref{Eqn::AntiDev::PfAtd::DefTildeEta} we have $\theta_1=\sum_{|\alpha|=m}\partial^\alpha(\sigma_\alpha\ast\theta_1)
	=2^{-m}\sum_{|\alpha|=m}\partial^\alpha\tilde\theta^\alpha_1$, so for $j\ge1$,  
	\begin{equation}\label{Eqn::Intro::PfAtd::Tmp}
	\theta_j=2^{-m}\sum_{|\alpha|=m}2^{-(j-1)m}\partial^\alpha\tilde\theta^\alpha_j=2^{-jm}\sum_{|\alpha|=m}\partial^\alpha\tilde\theta^\alpha_j,
	\end{equation}
	which is \eqref{Eqn::MainThm::AtDChar}. Thus for $f\in\Ss'(\omega)$,
	\begin{align*}
	T^{\eta,\theta,r}_\omega f
	&=\sum_{j=1}^\infty 2^{jr}\eta_j\ast(\1_\omega \cdot (\theta_j\ast f)) \\
	&= \sum_{|\alpha|=m}\sum_{j=1}^\infty 2^{jr-jm}\eta_j\ast(\1_\omega \cdot (\partial^\alpha\tilde\theta^\alpha_j\ast f)) \\ 
	&= \sum_{|\alpha|=m}\sum_{j=1}^\infty 2^{j(r-m)}\eta_j\ast(\1_\omega \cdot (\tilde\theta^\alpha_j\ast \partial^\alpha f)) 
	= \sum_{|\alpha|=m} T^{\eta,\tilde\theta^\alpha,r-m}_\omega \partial^\alpha f. 
	\end{align*}

	Similarly we can define $(\tilde\eta^\alpha_j)_j$, and by using the identity 
	$\eta_j = 2^{-jm}$ $ \sum_{|\alpha|=m} \partial^{\alpha} \tilde{\eta}_j^{\alpha}$, we have
	$T^{\eta,\theta,r}_\omega f = 
	\sum_{|\alpha|=m} \partial^{\alpha} (T^{\tilde\eta^{\alpha}, \theta, r-m}_\omega f)$.                    
\end{proof}

	\begin{rem}\label{Rmk::AntiDev::SigmaAlpha}
		In \eqref{Eqn::AntiDev::ProofCase>1::SigmaAlphaAstf}, the notation $\sigma_\alpha\ast f$ is indeed a convolution of $f$ and an element in $\dot\Ss'(\R^n)$. Moreover as a linear functional on $\dot\Ss(\R^n)$, $\sigma_\alpha$ is supported in $\overline{K}$. For the proof we leave details to reader.
	\end{rem}

\section{The Quantitative Smoothing Estimate: Proof of Theorem~\ref{Thm::MainThm} \ref{Item::MainThm::Smoothing}}\label{Section::Smoothing}
In this section we prove Theorem~\ref{Thm::MainThm} \ref{Item::MainThm::Smoothing}. Let $\omega=\{x_n>\rho(x')\}\subset\R^n$ be a fixed special Lipschitz domain. We need to prove that for any $m\in\N$, $s<m+r$ and $1\le p\le \infty$,
\begin{equation}\label{Eqn::Smooth::Goal}
\sum_{|\alpha|\le m}\Big(\int_{\overline{\omega}^c}|\dist(x,\omega)^{m+r-s}\partial^\alpha T^{\eta,\theta,r}_\omega f(x)|^pdx\Big)^\frac1p\lesssim_{\eta,\theta,\omega,r,m,s,p}\|f\|_{\Fs_{p\infty}^{s}(\omega)},\quad\forall f\in\Fs_{p\infty}^{s}(\omega).
\end{equation}
For $p=\infty$ we replace the $L^p$-integral by the essential supremum.

In the proof we adopt the following notations for dyadic strips. 

\begin{note}\label{Note::Intro::Strips}
	For a special Lipschitz domain $\omega=\{(x',x_n):x_n>\rho(x')\}$, we denote
	\begin{align*} %\label{Eqn::ExtSmooth::Pk}
	P_k=P^\omega_k:=&\{(x',x_n):2^{-\frac12-k}<x_n-\rho(x')<2^{\frac12-k}\} \subset \omega, & k\in\Z; 
	\\ %\label{Eqn::ExtSmooth::Sk}
	S_k=S^\omega_k:=&\{(x',x_n):-2^{\frac12-k}<x_n-\rho(x')<-2^{-\frac12-k}\} \subset \overline\omega^c,&k\in\Z;
	\\%\label{Eqn::ExtSmooth::P>k}
	P_{>k}=P^\omega_{>k}:=&\{0<x_n-\rho(x')<2^{-\frac12-k}\}\subset\omega, & k\in\Z.
	\end{align*} 
\end{note}
Hence by neglecting zero measure sets we have disjoint unions 
\begin{equation}\label{Eqn::Smooth::DisjointStripUnion}
P_{>k}=\coprod_{j=k+1}^\infty P_j,\  \forall k\in\Z;\quad\omega = \coprod_{k \in \Z} P_k;\quad\overline\omega^c = \coprod_{k \in \Z} S_k.
\end{equation}
Moreover since $\|\nabla\rho\|_{L^\infty}<1$, we have
\begin{equation}\label{Eqn::Intro::DistEqv}
2^{-1-k}\le\dist(x,\omega)\le 2^{\frac12-k},\quad\forall \, k\in\Z,\quad x\in S_k.
\end{equation}
Therefore multiplying by a constant factor, we can replace $\dist_\omega$ by $\sum_{k\in\Z}2^{-k}\1_{S_k}$ in \eqref{Eqn::Smooth::Goal}, that is
\begin{equation}\label{Eqn::Smooth::Goal2}
\sum_{|\alpha|\le m}\Big(\sum_{k\in\Z}\big(2^{-k(m+r-s)}\|\partial^\alpha T^{\eta,\theta,r}_\omega f\|_{L^p(S_k)}^p\big)^\frac1p\lesssim_{\eta,\theta,\omega,r,m,s,p}\|f\|_{\Fs_{p\infty}^s(\omega)}.
\end{equation}

The proof of \eqref{Eqn::Smooth::Goal2} turns out to be quite technical. For readers' convenience we first give some sketches of proof in Section \ref{Section::SmoothingIdea} that illustrate all the essential ideas.  

\subsection{Sketch of the proof}\label{Section::SmoothingIdea}
For simplicity we only sketch the case $m=r=0$ and $\omega=\R_+^n=\{x_n>0\}$. That is, we will show the boundedness of the operator $\dist_\omega^{-s}\cdot T^{\eta,\theta,0}_\omega:\Fs_{p\infty}^s(\omega)\to L^p(\overline{\omega}^c)$ for $s<0$. 

By \cite[Theorem~3.2]{ExtensionLipschitz} $\Fs_{p\infty}^{s}(\omega)$ has an equivalent norm 
\begin{equation}\label{Eqn::Intro::FsNorm}
f\mapsto \Big\|\sup\limits_{j\in\N}2^{js}|\phi_j\ast f|\Big\|_{L^p(\omega)},
\end{equation}
where $\phi_0\in\Ss(-\Kb)$ and $(\phi_j)_{j=1}^\infty\in\Gf(-\Kb)$ satisfy $\sum_{j=0}^\infty\phi_j=\delta_0$. We Remark~that the assumption in \cite[Theorem~3.2]{ExtensionLipschitz} is $\phi_0\in C_c^\infty(-\Kb)$, but based on \cite[Theorem~4.1(b)]{ExtensionLipschitz}  same proof works for $\phi_0\in\Ss(-\Kb)$.

Using $\omega = \coprod_{k \in \Z} P_k $, we have $\|\sup_{j\in\N}2^{js}|\phi_j\ast f|\|_{L^p(\omega)}=\big\|(\|\sup_{j\in\N}2^{js}|\phi_j\ast f|\|_{L^p(P_k)})_{k\in\Z}\big\|_{\ell^p}$.
Thus by setting $t=-s$, what we need is to prove for $t>0$ and $1\le p\le\infty$,
\begin{equation}\label{Eqn::Intro::Idea::KeyEqn}
\Big(\sum_{k\in\Z}\big(2^{-kt}\|T^{\eta,\theta,0}_\omega f\|_{L^p(S_k)}\big)^p\Big)^\frac1p\lesssim_{\phi,\eta,\theta,t,p,\omega}\Big(\sum_{k\in\Z}\big\|\sup\limits_{j\in\N}2^{-jt}|\phi_j\ast f|\big\|_{L^p(P_k)}^p\Big)^\frac1p,\ \forall f\in\Fs_{p\infty}^{-t}(\omega).
\end{equation}
Here we take the usual modification when $p=\infty$.

To prove \eqref{Eqn::Intro::Idea::KeyEqn}, we consider the following decomposition
\begin{equation}\label{Eqn::Intro::Idea::KeyDecomp}
T^{\eta,\theta,0}_\omega f=\sum_{j=1}^\infty\sum_{l\in\Z}\eta_j\ast(\1_{P_l}\cdot(\theta_j\ast f)).
\end{equation}

By assumption $\eta_j$ is supported in $-\Kb$, and $\widehat\eta_j$ has infinite order vanishing at $\xi=0$. By neglecting the tail decays we can say $\eta_j$ is concentrated in $\{x_n\approx-2^{-j},|x'|\lesssim 2^{-j}\}$ and $\widehat\eta_j$ is concentrated in $\{|\xi|\approx 2^j\}$. Let us write
\begin{equation*}
    \operatorname{ess-supp}\eta_j\subset\{x_n\approx-2^{-j},|x'|\lesssim 2^{-j}\},\quad\operatorname{ess-supp}\widehat\eta_j\subset\{|\xi|\approx 2^j\}.
\end{equation*}
The same properties hold for $(\theta_j)_{j=1}^\infty$ and $(\phi_j)_{j=1}^\infty$, and we have $\operatorname{ess-supp}\phi_0\subset\{x_n\approx-1,|x'|\lesssim 1\}$, $\operatorname{ess-supp}\widehat\phi_0\subset\{|\xi|\lesssim1\}$.

From \eqref{Eqn::Intro::Idea::KeyDecomp} it is not difficult to see that 
\begin{equation}\label{Eqn::Intro::Idea::SumSupp}
\operatorname{ess-supp}(\eta_j\ast(\1_{P_l}\cdot(\theta_j\ast f)))\subseteq\operatorname{ess-supp}\eta_j+P_l\approx\{x_n\approx 2^{-l}-2^{-j}\}\approx\begin{cases}\{x_n\approx 2^{-l}\}&\text{if }j\gg l,\\\{|x_n|\lesssim 2^{-l}\}&\text{if }j\approx l,\\\{x_n\approx-2^{-j}\}&\text{if }j\ll l.\end{cases}
\end{equation}
Here by $j\gg l$ we mean $j\ge l+C$ where $C$ is a constant that does not depend on $j$ and $l$. Similar meanings hold for $j\approx l$ and $j\ll l$.

If $\1_{S_k}(\eta_j\ast(\1_{P_l}\cdot(\theta_j\ast f)))$ is a non-zero function then $\{x_n\approx-2^{-k}\}\subseteq\{x_n\approx 2^{-l}-2^{-j}\}$, which means $j\approx\min(k,l)$. So roughly
\begin{align}\notag
\1_{S_k} \cdot T^{\eta,\theta,0}_\omega f&\approx\1_{S_k} \cdot \sum_{l\in\Z}\sum_{j=\min(k,l)}\eta_j\ast(\1_{P_l}\cdot(\theta_j\ast f))
\\
\notag&=\1_{S_k} \cdot \sum_{l=k+1}^\infty\eta_k\ast(\1_{P_l}\cdot(\theta_k\ast f)) + \1_{S_k}\cdot \sum_{l=0}^k\eta_l \ast(\1_{P_l}\cdot(\theta_l\ast f))
\\\label{Eqn::Intro::Idea::MoralDecomp}
&=\1_{S_k} \cdot (\eta_k\ast(\1_{P_{>k}}\cdot(\theta_k\ast f))) + \1_{S_k} \cdot \sum_{0\le l\le 
		k}\eta_l\ast(\1_{P_l}\cdot(\theta_l\ast f)).
\end{align}

To estimate the first term, take a further decomposition $\theta_k\ast f=\sum_{j'=0}^\infty\theta_k\ast \phi_{j'}\ast f$, where $(\phi_j)_{j=0}^\infty$ is the same as in \eqref{Eqn::Intro::FsNorm}.

By the support condition of $\widehat{\theta}_k$ and $\widehat\phi_{j'}$, the function $\theta_k\ast\phi_{j'}$ is (essentially) non-zero only when $\{|\xi|\approx 2^k\}\cap\{|\xi|\approx 2^{j'}\}\neq\emptyset $, which is $j'\approx k$. In other words, $\theta_k\ast f\approx\theta_k\ast \phi_k\ast f$.

Note that $P_{>k}=\{0<x_n\lesssim 2^{-k}\}$, and $P_{>k}-\operatorname{ess-supp}\theta_k=\{x_n\approx 2^{-k}\}\approx P_k$. So roughly
\begin{equation}\label{Eqn::Intro::Idea::MoralDecomp2}
\1_{P_{>k}} \cdot (\theta_k\ast f)\approx\1_{P_{>k}} \cdot (\theta_k\ast\phi_k\ast f)\approx\1_{P_{>k}} \cdot (\theta_k\ast(\1_{P_k} \cdot  (\phi_k\ast f))),\quad k\in\Z_+.
\end{equation}

By Young's inequality we have
\begin{equation}\label{Eqn::Intro::Idea::Term1}
\|\theta_k\ast(\1_{P_k} \cdot  (\phi_k\ast f))\|_{L^p(P_{>k})}\le\|\theta_k\|_{L^1}\|\phi_k\ast f\|_{L^p(P_k)}\lesssim\|\phi_k\ast f\|_{L^p(P_k)}\le 2^{kt}\|\sup\limits_{q\in\N}2^{-qt}|\phi_q\ast f|\|_{L^p(P_k)}.
\end{equation}

Thus $2^{-kt}\|\eta_k\ast(\1_{P_{>k}}\cdot(\theta_k\ast f))\|_{L^p(S_k)}\lesssim\|\sup\limits_{q\in\N}2^{-qt}|\phi_q\ast f|\|_{L^p(P_k)}$.

Notice that we have not used the assumption $t>0$ so far. The assumption is required to estimate the second sum of \eqref{Eqn::Intro::Idea::MoralDecomp}, which is the main term.

Since $P_l-\operatorname{ess-supp}\theta_l=\{x_n\approx 2^{-l}\}\approx P_l$, we have
\begin{equation}\label{Eqn::Intro::Idea::MoralDecomp2b}
\1_{P_l} \cdot (\theta_l\ast f)\approx\1_{P_l} \cdot (\theta_l\ast\phi_l\ast f)\approx\1_{P_l} \cdot (\theta_l\ast(\1_{P_l} \cdot (\phi_l\ast f))),\quad l\in\Z_+.
\end{equation}
By Young's inequality we have
\begin{equation*}
\|\theta_l\ast f\|_{L^p(P_l)}\approx\|\theta_l\ast(\1_{P_l} \cdot (\phi_l\ast f))\|_{L^p(P_l)}\le\|\theta_l\|_{L^1}\|\phi_l\ast f\|_{L^p(P_l)}\lesssim\|\phi_l\ast f\|_{L^p(P_l)}\lesssim 2^{lt}\big\|\sup\limits_{q\in\N}2^{-qt}|\phi_q\ast f|\big\|_{L^p(P_l)}.
\end{equation*}

Therefore, taking $L^p$-norm we have
\begin{equation}\label{Eqn::Intro::Idea::Term2}
\begin{aligned}
&2^{-kt}\sum_{l\le k}\|\eta_l\ast(\1_{P_l}\cdot(\theta_l\ast f))\|_{L^p(S_k)}\le2^{-kt}\sum_{l\le k}\|\eta_l\|_{L^1(\R^n)}\|\theta_l\ast f\|_{L^p(P_l)}\lesssim 2^{-kt}\sum_{l\le k}\|\theta_l\ast f\|_{L^p(P_l)}
\\
\lesssim&\sum_{l\le k}2^{-t(k-l)}\big\|\sup\limits_{q\in\N}2^{-qt}|\phi_q\ast f|\big\|_{L^p(P_l)}\le\sum_{l=-\infty}^\infty 2^{-t|k-l|}\big\|\sup\limits_{q\in\N}2^{-qt}|\phi_q\ast f|\big\|_{L^p(P_l)}.
\end{aligned}
\end{equation} 

Combining \eqref{Eqn::Intro::Idea::Term2} with  $2^{-kt}\|\eta_k\ast(\1_{P_{>k}}\cdot(\theta_k\ast f))\|_{L^p(S_k)}\lesssim\|\sup\limits_{q\in\N}2^{-qt}|\phi_q\ast f|\|_{L^p(P_k)}$, we have for $k\in\Z$,
$$2^{-kt}\|T^{\eta,\theta,0}f\|_{L^p(S_k)}\lesssim\sum_{l\in\Z}2^{-t|k-l|}\big\|\sup\limits_{q\in\N}2^{-qt}|\phi_q\ast f|\big\|_{L^p(P_l)}=(2^{-t|\bullet|})\ast\big(\big\|\sup\limits_{q\in\N}2^{-qt}|\phi_q\ast f|\big\|_{L^p(P_\bullet)}\big)[k].$$

By Young's inequality, since $\ell^1(\Z)\ast\ell^p(\Z)\subseteq\ell^p(\Z)$ and $(2^{-t|k|})_{k\in\Z}\in\ell^1(\Z)$, we obtain \eqref{Eqn::Intro::Idea::KeyEqn}, and this completes the outline of proof.

\subsection{The computations}\label{Section::SmoothingPrecise}

Here we assume $1\le p\le\infty$ rather than $0<p\le\infty$.

Multiplying by a large constant if necessary, we can assume that the $\eta,\theta\in\Gf$ in Theorem~\ref{Thm::MainThm} satisfy $\supp\eta_j,\supp\theta_j\subset\{x_n<-2^{-j}\}$ for each $j\ge1$. 

\begin{lemma}\label{Lem::Smooth::ScalingRmk}
	Let $\eta\in\Gf(-\Kb)$.
	There is a linear coordinate change $x=\Phi(y)$, such that in the $y$-coordinates we have $\supp\eta_j\subset(-\Kb)\cap\{y_n<-2^{-j}\}$ for $j\ge1$ and $\omega=\{y_n>\tilde\rho(y')\}$ is still a special Lipschitz domain with respect to $y$.
\end{lemma}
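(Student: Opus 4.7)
The plan is to use a simple isotropic dilation $\Phi(y)=\lambda y$ for a suitable $\lambda>0$; the whole argument reduces to extracting a positive separation of $\supp\eta_1$ from the apex of the cone $-\Kb$.

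First, I would observe that $\supp\eta_1$, being closed in $\R^n$ and contained in the open cone $-\Kb$, avoids the boundary point $0\in\partial(-\Kb)$, so $r:=\dist(0,\supp\eta_1)>0$. For any $x\in\supp\eta_1$, the cone inequality $x_n<-|x'|$ yields $2x_n^2>|x'|^2+x_n^2=|x|^2\ge r^2$; since $x_n<0$ this gives $x_n\le -c$ with $c:=r/\sqrt{2}$. The dilation relation $\eta_j(x)=2^{(j-1)n}\eta_1(2^{j-1}x)$ together with the scale-invariance of $-\Kb$ then yields $\supp\eta_j\subset -\Kb\cap\{x_n\le -c\cdot 2^{-(j-1)}\}$ for every $j\ge 1$.

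Next, I would take $\lambda:=c$ and set $\Phi(y):=\lambda y$. In the new $y$-coordinates the support of $\eta_j$ becomes $\lambda^{-1}\supp\eta_j$, which remains in $-\Kb$ (positive dilation preserves cones) and which lies in $\{y_n\le -c\cdot 2^{-(j-1)}/\lambda\}=\{y_n\le -2^{-(j-1)}\}\subset\{y_n<-2^{-j}\}$. Moreover, $\omega$ in $y$-coordinates is $\{y:\lambda y\in\omega\}=\{y_n>\tilde\rho(y')\}$ with $\tilde\rho(y'):=\lambda^{-1}\rho(\lambda y')$, and since $\nabla\tilde\rho(y')=\nabla\rho(\lambda y')$ the Lipschitz constant $\|\nabla\tilde\rho\|_{L^\infty}=\|\nabla\rho\|_{L^\infty}<1$, so $\omega$ is still a special Lipschitz domain in $y$-coordinates.

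The only substantive step is the first one: closedness of $\supp\eta_1$ combined with containment in the open cone forces a positive gap at the apex. Once this is secured, the rest is the elementary observation that isotropic scaling preserves both the cone $-\Kb$ and the Lipschitz constant of the defining graph of $\omega$, while simultaneously rescaling the $x_n$-separation of the support by the factor $\lambda^{-1}$.
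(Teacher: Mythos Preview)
Your proof is correct and follows essentially the same approach as the paper: an isotropic dilation $\Phi(y)=cy$ with $c$ chosen from the positive separation of $\supp\eta_1$ from the apex of $-\Kb$, after which cone-invariance under scaling and preservation of the Lipschitz constant of $\rho$ are immediate. The paper simply asserts the existence of such a $c$, while you supply the extra line justifying it via $\dist(0,\supp\eta_1)>0$ and the cone inequality; otherwise the arguments coincide.
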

\begin{proof}
	Since $\eta_1$ is supported in $-\Kb$, there is a $0<c\le1$ such that $\supp\eta_1\subset-\Kb\cap\{x_n<-\frac c2\}$.
	
	Take $\Phi(y)=c\cdot y$, we have $\supp(\eta_1\circ\Phi)\subset-\Kb\cap\{y_n<-\frac12\}$. By scaling $\eta_j\circ\Phi$ is supported in $-\Kb\cap\{y_n<-2^{-j}\}$. We obtain the support conditions of $\eta_j$ in $y$-coordinates.
	
	Note that $\Phi^{-1}(\omega)=\{(y',y_n):y_n>c^{-1}\rho(cy')\}$. We see that the function $\tilde \rho(y'):=c^{-1}\rho(cy')$ satisfies $\|\nabla\tilde\rho\|_{L^\infty}=\|\nabla\rho\|_{L^\infty}<1$. Therefore in $y$-coordinates $\omega=\{y_n>\tilde\rho(y')\}$ is still a special Lipschitz domain, finishing the proof.
\end{proof}

To illustrate \eqref{Eqn::Intro::Idea::SumSupp}, we separate the discussion between $j\ll\min(k,l)$ and $j\gg\min(k,l)$ in \eqref{Eqn::Intro::Idea::KeyDecomp}.

When $j\ll\min(k,l)$, the summand $\1_{S_k} \cdot (\eta_j\ast(\1_{P_l} \cdot (\theta_j\ast f)))=0$ is valid, see Remark~\ref{Rmk::Smooth::TrueDecomp}. It is based on the following:
\begin{lemma}\label{Lem::Smooth::StripPlus}
	Let $R_0\in\Z_+$ and let $\omega=\{x_n>\rho(x')\}$ be a special Lipschitz domain with $\|\nabla\rho\|_{L^\infty}\le 1-2^{-R_0}$. Then $(-\Kb\cap\{x_n<-2^{-j}\})+\{x_n-\rho(x')<a\}\subseteq\{x_n-\rho(x')<a-2^{-R_0-j}\}$ holds for every $j\in\N$ and $a\in\R$.
\end{lemma}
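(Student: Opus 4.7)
The strategy is entirely elementary: unpack the defining inequalities of the three sets involved and combine them using the Lipschitz bound on $\rho$ together with the cone condition. I would fix arbitrary $u \in -\Kb\cap\{x_n<-2^{-j}\}$ and $v \in \{x_n-\rho(x')<a\}$, and verify directly that $u+v$ lies in $\{x_n-\rho(x')<a-2^{-R_0-j}\}$.

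The key identity is the decomposition
\begin{equation*}
(u_n+v_n)-\rho(u'+v') \;=\; \big(v_n-\rho(v')\big) \;+\; u_n \;+\; \big(\rho(v')-\rho(u'+v')\big).
\end{equation*}
The first bracket is bounded above by $a$ by the hypothesis on $v$. For the last bracket, the Lipschitz estimate $\|\nabla\rho\|_{L^\infty}\le 1-2^{-R_0}$ gives $\rho(v')-\rho(u'+v') \le (1-2^{-R_0})|u'|$. The cone condition $u\in -\Kb$ provides $|u'| < -u_n$, and substituting yields
\begin{equation*}
u_n + \big(\rho(v')-\rho(u'+v')\big) \;<\; u_n + (1-2^{-R_0})(-u_n) \;=\; 2^{-R_0}\,u_n.
\end{equation*}
Finally, the strict bound $u_n<-2^{-j}$ gives $2^{-R_0}u_n<-2^{-R_0-j}$, so altogether $(u_n+v_n)-\rho(u'+v') < a - 2^{-R_0-j}$, as required.

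The argument is a one-screen calculation and I anticipate no real obstacle; the only mild subtlety is to keep the strict cone condition $u_n<-|u'|$ (rather than the closed version) so that the strict inequality propagates cleanly through the chain. Note also that the computation uses the full strength of $\|\nabla\rho\|_{L^\infty}<1$ via the quantitative parameter $R_0$, which is precisely what supplies the gain $2^{-R_0-j}$ on the right-hand side, confirming the role of this hypothesis promised in Remark \ref{Rmk::Intro::RmkSpeDom}.
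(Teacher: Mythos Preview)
Your proof is correct and essentially identical to the paper's own argument: both decompose $(u_n+v_n)-\rho(u'+v')$ by adding and subtracting $\rho$ at the appropriate base point, apply the Lipschitz bound $\|\nabla\rho\|_{L^\infty}\le 1-2^{-R_0}$, then use the cone condition $|u'|<-u_n$ and the bound $u_n<-2^{-j}$ to extract the gain $2^{-R_0-j}$. The only cosmetic difference is that the paper swaps the roles of the letters $u$ and $v$.
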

\begin{proof}
	Let $u\in\{x_n-\rho(x')<a\}$ and $v\in -\Kb\cap\{x_n<-2^{-j}\}$, we have $u_n-\rho(u')<a$ and $v_n<-\max(2^{-j},|v'|)$. Since $\sup|\nabla\rho|\le1-2^{-R_0}$, we have
	\begin{align*}
	u_n+v_n-\rho(u'+v')\le& u_n-\rho(u')+v_n+|\rho(u'+v')-\rho(u')|
	\\
	\le& u_n-\rho(u')+v_n+(1-2^{-R_0})|v'|\le a+2^{-R_0}v_n<a-2^{-R_0-j}.
	\end{align*}
	This finishes the proof.
\end{proof}
\begin{rem}\label{Rmk::Smooth::TrueDecomp}
	Following Lemma~\ref{Lem::Smooth::StripPlus}, suppose $\|\nabla\rho\|_{L^\infty}\le 1-2^{-R_0}$, then $\1_{S_k} \cdot (\eta_j\ast(\1_{P_l}(\theta_j\ast f)))=0$ for $j\le \min(k,l)-R_0-2$. Therefore we have the following decomposition (cf. \eqref{Eqn::Intro::Idea::MoralDecomp}):
	\begin{equation*}\label{Eqn::Smooth::KeyDecomp}
	\1_{S_k} \cdot (T^{\eta,\theta,r}_\omega f)=\1_{S_k} \cdot \sum_{j=1}^\infty\sum_{l=-\infty}^{j+R_0+1}2^{jr}\eta_j\ast(\1_{P_l}\cdot(\theta_j\ast f))+\1_{S_k} \cdot \sum_{j\ge\max(1,k-R_0-1)}2^{jr}\eta_j\ast(\1_{P_{>j+R_0+1}}\cdot(\theta_j\ast f)).
	\end{equation*}
  This fact is not used explicitly in the proof and we leave the details to the reader. 
\end{rem}

When $j\gg\min(k,l)$, in \eqref{Eqn::Intro::Idea::SumSupp} we say $\1_{S_k} \cdot (\eta_j\ast(\1_{P_l}(\theta_j\ast f)))\approx0$ and ignore this term in \eqref{Eqn::Intro::Idea::MoralDecomp}. But in reality $\1_{S_k} \cdot (\eta_j\ast(\1_{P_l} \cdot (\theta_j\ast f)))$ is not a zero function since the actual support of $\eta_j$ is unbounded\footnote{Unless we consider $\eta_j\in C_c^\infty(-\Kb)$ with only finitely many moments vanishing, which is the assumption in \cite[Section 2]{ExtensionLipschitz}. In this case the extension operator is not ``universal'' since it is not bounded is $\As_{pq}^s$-spaces if $|s|$ large or $p,q$ closed to $0$.}. We say it is negligible, in the way that we have a rapid decay on $j$.

\begin{prop}\label{Prop::Smooth::1Decay}
	Let $R_0\in\Z_+$ and let $\omega=\{x_n>\rho(x')\}$ satisfies $\|\nabla\rho\|_{L^\infty}\le 1-2^{-R_0}$. Let $P_k,P_{>k}$ $(k\in\Z)$ be as in Notation~\ref{Note::Intro::Strips}. Let $\eta=(\eta_j)_{j=1}^\infty\in\Gf(-\Kb)$ satisfies $\supp\eta_j\subseteq\{x_n<-2^{-j}\}$ for all $j$.
	Then for any $M>0$ there is a $C=C(R_0,\eta,M)>0$ such that for $1\le p\le\infty$, $j\in\Z_+$, $k\in\Z$ and $l\le j$,
	\begin{align}
	\label{Eqn::Smooth::1Decay::P>}
	\|\eta_j\ast(\1_{P_{>j}} \cdot g)\|_{L^p(S_k)}&\le C2^{-M\max(0,j-k)}\|g\|_{L^p(P_{>j})},&&\forall g\in L^p(P_{>j});
	\\
	\label{Eqn::Smooth::1Decay::Pl}
	\|\eta_j\ast(\1_{P_l} \cdot g)\|_{L^p(S_k)}&\le C2^{-M(\max(0,j-k)+(j-l))}\|g\|_{L^p(P_l)},&&\forall g\in L^p(P_l).
	\end{align}
\end{prop}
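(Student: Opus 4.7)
My plan is to combine the Schwartz decay of $\eta_j$ with a geometric lower bound on $|x-y|$ that uses the support condition $x-y\in\supp\eta_j\seq -\Kb\cap\{z_n<-2^{-j}\}$ together with the strict Lipschitz hypothesis $\|\nabla\rho\|_{L^\infty}\le 1-2^{-R_0}$, and then to apply Young's inequality to a truncated kernel.

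First I would record two ingredients. The scaling $\eta_j(z)=2^{(j-1)n}\eta_1(2^{j-1}z)$ with $\eta_1\in\Ss(\R^n)$ yields, for every $N\in\N$, a constant $C_N=C_N(\eta_1)$ with $|\eta_j(z)|\le C_N 2^{jn}(1+2^j|z|)^{-N}$. Second, I would prove a \emph{vertical separation lemma}: if $x\in S_k$, $y\in\omega$, and $x-y\in -\Kb$, then $|x'-y'|\le y_n-x_n$, hence $|\rho(x')-\rho(y')|\le (1-2^{-R_0})(y_n-x_n)$. Writing $y_n-x_n=(y_n-\rho(y'))+(\rho(x')-x_n)-(\rho(x')-\rho(y'))$ and isolating $y_n-x_n$ yields $(2-2^{-R_0})(y_n-x_n)\ge (y_n-\rho(y'))+(\rho(x')-x_n)$. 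For $y\in P_l$ this gives $y_n-x_n\ge c_0(2^{-l}+2^{-k})$ with $c_0=c_0(R_0)>0$; for $y\in P_{>j}$, the extra constraint $y_n-x_n>2^{-j}$ from $\supp\eta_j$ combined with $\rho(x')-x_n\ge 2^{-\frac12-k}$ gives $y_n-x_n\ge c_0(2^{-j}+2^{-k})$. Since $|x-y|\ge y_n-x_n$, I obtain $|x-y|\ge c_0 2^{-\min(l,k)}$ in the first situation and $|x-y|\ge c_0 2^{-\min(j,k)}$ in the second.

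Equipped with this separation, Young's inequality for the truncated kernel $\tilde\eta_{j,l,k}(z):=|\eta_j(z)|\1_{\{|z|\ge c_0 2^{-\min(l,k)}\}}$ gives $\|\eta_j\ast(\1_{P_l}g)\|_{L^p(S_k)}\le \|\tilde\eta_{j,l,k}\|_{L^1}\|g\|_{L^p(P_l)}$, and rescaling $w=2^j z$ produces
\[
\|\tilde\eta_{j,l,k}\|_{L^1}\le C_N\int_{|w|\ge c_0 2^{j-\min(l,k)}}(1+|w|)^{-N}dw\lesssim_N 2^{-(N-n)(j-\min(l,k))},
\]
valid for $N>n$, where the hypothesis $l\le j$ keeps the lower integration limit bounded below by $c_0$. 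Finally, I would verify the elementary inequality $\max(0,j-k)+(j-l)\le 2(j-\min(l,k))$ for $l\le j$ (the hypothesis $l\le j$ is exactly what rules out the would-be bad case $j<k\le l$), so that choosing $N=2M+n$ delivers \eqref{Eqn::Smooth::1Decay::Pl}. Since $j-\min(j,k)=\max(0,j-k)$, the identical computation with $\min(j,k)$ in place of $\min(l,k)$ and $N=M+n$ gives \eqref{Eqn::Smooth::1Decay::P>}.

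The main obstacle is the vertical separation lemma: it is precisely the strict Lipschitz bound $1-2^{-R_0}<1$ (combined with the cone condition $|x'-y'|\le y_n-x_n$ coming from $\supp\eta_j\seq-\Kb$) that allows the horizontal Lipschitz variation $(1-2^{-R_0})|x'-y'|$ to be absorbed into the vertical gap $y_n-x_n$. Once this geometric lemma is in place the rest is routine, and the loss of a factor of two in the exponent of the Schwartz decay is harmless because $M$ is arbitrary.
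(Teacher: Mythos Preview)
Your proof is correct and follows essentially the same route as the paper: a geometric lower bound on $|x-y|$ of order $2^{-\min(l,k)}$ (resp.\ $2^{-k}$) for $x\in S_k$, $y\in P_l$ (resp.\ $P_{>j}$), then Young's inequality against the $L^1$-tail of $\eta_j$ outside that radius, the Schwartz decay plus scaling to bound the tail, and finally the arithmetic inequality $2(j-\min(l,k))\ge \max(0,j-k)+(j-l)$ to land on the stated exponent. The only cosmetic difference is that the paper obtains the separation directly as $\dist(S_k,P_l)\ge \tfrac1{\sqrt2}(2^{-\frac12-l}+2^{-\frac12-k})$ and $\dist(S_k,P_{>j})\ge 2^{-1-k}$ from the strip definitions alone, without invoking the cone condition $x-y\in-\Kb$ that you build into your vertical separation lemma; your use of the cone is harmless but not actually needed for this step.
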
Note that the assumption $\supp\eta_j\subseteq\{x_n<-2^{-j}\}$ is guaranteed by Lemma~\ref{Lem::Smooth::ScalingRmk}.
\begin{rem}
	If we take $g=\theta_j\ast f$ in \eqref{Eqn::Smooth::1Decay::Pl} and if $j\gg\min(k,l)$, then we have for arbitrary large $M$ the decay $\|\1_{S_k} \cdot (\eta_j\ast(\1_{P_l} \cdot (\theta_j\ast f)))\|_{L^p(\R^n)}\lesssim_M2^{-M(j-\min(k,l))}\|\theta_j\ast f\|_{L^p(\R^n)}$.
\end{rem}
\begin{proof}[Proof of Proposition~\ref{Prop::Smooth::1Decay}]
	First we claim that there is a $C'=C'(\eta,M)>0$ such that
	\begin{equation}\label{Eqn::Smooth::1Decay::Eta1Decay}
	\int_{|x|>2^{-k}}|\eta_j(x)|dx\le C'2^{-M\max(0,j-k)},\quad \forall j\in\Z_+,\quad k\in\Z.
	\end{equation}
	Since $\eta_1\in\Ss(\R^n)$, we have $\int_{|x|>2^{-k}}|\eta_1|\lesssim_M \min(1,2^{Mk})=2^{-M\max(0,-k)}$ for all $k\in\Z$. By the scaling assumption for $\eta$ we have, for $j\ge1$,
	\begin{equation*}
	\int_{|x|>2^{-k}}|\eta_j(x)|dx=\int_{|x|>2^{-k}}2^{(j-1)n}|\eta_1(2^{j-1}x)|dx=\int_{|\tilde x|>2^{j-1-k}}|\eta_1(\tilde x)|d\tilde x\lesssim_M2^{-M\max(0,j-1-k)}.
	\end{equation*}
	Clearly $2^{-M\max(0,j-1-k)}\approx_M2^{-M\max(0,j-k)}$, so \eqref{Eqn::Smooth::1Decay::Eta1Decay} follows. 
	
	For any open subsets $U,V\subseteq\R^n$, by Young's inequality we have $$\|\eta_j\ast(\1_V \cdot g)\|_{L^p(U)}\le\|\eta_j\|_{L^1\{|x|\ge\dist(U,V)\}}\|g\|_{L^p(V)},\quad g\in L^p(V).$$
	For $U=S_k$ and $V\in\{P_l,P_{>j}\}$ we have the following distance inequalities
	\begin{equation}\label{Eqn::Smooth::1Decay::DistFormTmp}
	\dist(P_l,S_k)\ge\tfrac1{\sqrt2}(2^{-\frac12-l}+2^{-\frac12-k})\ge2^{-1-\min(k,l)},\quad\dist(P_{>j},S_k)\ge\tfrac1{\sqrt2}2^{-\frac12-k}=2^{-1-k},\quad j,k,l\in\Z.
	\end{equation}
	
	Therefore we obtain \eqref{Eqn::Smooth::1Decay::P>} by
	\begin{align*}
	\|\eta_j\ast(\1_{P_{>j}} \cdot g)\|_{L^p(S_k)}\le&\|\eta_j\|_{L^1\{|x|>\dist(P_{>j},S_k)\}}\|g\|_{L^p(P_{>j})}\le\|\eta_j\|_{L^1\{|x|>2^{-1-k}\}}\|g\|_{L^p(P_{>j})}
	\\
	\lesssim&_{\eta,M}2^{-M\max(0,j-1-k)}\|g\|_{L^p(P_{>j})}
	\lesssim_{\eta,M}2^{-M\max(0,j-k)}\|g\|_{L^p(P_{>j})}.
	\end{align*}
	
	Similarly, \eqref{Eqn::Smooth::1Decay::Pl} follows by
	\begin{align*}
	\|\eta_j\ast(\1_{P_l} \cdot g)\|_{L^p(S_k)}\le&\|\eta_j\|_{L^1\{|x|>\dist(P_l,S_k)\}}\|g\|_{L^p(P_l)}\le\|\eta_j\|_{L^1\{|x|>2^{-1-\min(k,l)}\}}\|g\|_{L^p(P_l)}\\
	\lesssim&_{\eta,M}2^{-2M\max(0,j-1-\min(k,l))}\|g\|_{L^p(P_l)}
	\lesssim_{\eta,M}2^{-M(\max(0,j-k)+(j-l))}\|g\|_{L^p(P_l)}.
	\end{align*}    
	Thus we complete the proof.
\end{proof}

Next we prove the precise version of \eqref{Eqn::Intro::Idea::MoralDecomp2} and \eqref{Eqn::Intro::Idea::MoralDecomp2b}. 

\begin{prop}\label{Prop::Smooth::SecondDecomp}
	Let $R_0\in\Z_+$ and let $\omega=\{x_n>\rho(x')\}$ satisfy $\|\nabla\rho\|_{L^\infty}\le 1-2^{-R_0}$. Let $P_k,P_{>k}$ $(k\in\Z)$ be as in Notation~\ref{Note::Intro::Strips}.
	Let $(\phi_j,\psi_j)_{j=0}^\infty$ be a $\Kb$-pair (see Convention~\ref{Conv::Intro::KDyaPair}), and let $\theta=(\theta_j)_{j=1}^\infty\in\Gf(-\Kb)$ satisfies $\supp\theta_j\subseteq\{x_n<-2^{-j}\}$ for all $j\ge1$.
	
	Then for any $M>0$ there is a $C=C(R_0,\phi,\psi,\theta,M)>0$ such that for $1\le p\le\infty$,
	\begin{align}\label{Eqn::Smooth::SecondDecomp::Result1}
	\|\theta_j\ast f\|_{L^p(P_{>j})}&\le C\sum_{l'=-\infty}^{j+R_0}\sum_{j'=0}^\infty 2^{-M(|j-j'|+(j-l'))}\|\phi_{j'}\ast f\|_{L^p(P_{l'})},&\forall j\in\Z_+.
	\\\label{Eqn::Smooth::SecondDecomp::Result2}
	\|\theta_j\ast f\|_{L^p(P_l)}&\le C\sum_{l'=-\infty}^{l}\sum_{j'=0}^\infty 2^{-M(|j-j'|+(j-l'))}\|\phi_{j'}\ast f\|_{L^p(P_{l'})},&\forall j\in\Z_+,\ l\le j.
	\end{align}

\end{prop}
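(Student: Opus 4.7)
The plan is to exploit the approximate identity $\sum_{j'\geq 0}\psi_{j'}\ast\phi_{j'}=\delta_0$ from the $\Kb$-pair to rewrite
\[
  \theta_j\ast f = \sum_{j'=0}^{\infty}(\theta_j\ast\psi_{j'})\ast(\phi_{j'}\ast f).
\]
Since $\theta_j$ and $\psi_{j'}$ are both supported in $-\Kb$, so is $\theta_j\ast\psi_{j'}$; combined with $\omega+\Kb\subseteq\omega$, this implies that for any $x\in\omega$ the value of $(\theta_j\ast\psi_{j'})\ast(\phi_{j'}\ast\tilde f)(x)$ depends only on the restriction $\phi_{j'}\ast f=(\phi_{j'}\ast\tilde f)|_{\omega}$, independently of the choice of extension $\tilde f$. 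Using the partition $\omega=\coprod_{l'\in\Z}P_{l'}$ we then obtain, for $x\in P_{>j}$ or $x\in P_l$,
\[
  (\theta_j\ast f)(x) = \sum_{j'=0}^\infty\sum_{l'\in\Z}\bigl[(\theta_j\ast\psi_{j'})\ast\bigl(\1_{P_{l'}}\cdot(\phi_{j'}\ast f)\bigr)\bigr](x).
\]

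Next, I would apply Lemma \ref{Lem::Smooth::StripPlus} to the support $\supp(\theta_j\ast\psi_{j'})\subseteq(-\Kb)\cap\{z_n<-2^{-\min(j,j')}\}$ to conclude that $\supp(\theta_j\ast\psi_{j'})+P_{l'}\subseteq\{x_n-\rho(x')<2^{1/2-l'}-2^{-R_0-\min(j,j')}\}$. This forces the summand to vanish unless $l'\le j+R_0+1$ in the $P_{>j}$-case, and unless $l'\le l$ (up to an offset absorbable in $C$) in the $P_l$-case. For each surviving summand, Young's inequality gives
\[
  \bigl\|(\theta_j\ast\psi_{j'})\ast(\1_{P_{l'}}\cdot(\phi_{j'}\ast f))\bigr\|_{L^p(V)} \le \|\theta_j\ast\psi_{j'}\|_{L^1(\{|z|>\dist(V,P_{l'})\})}\cdot\|\phi_{j'}\ast f\|_{L^p(P_{l'})}.
\]
The weighted Heideman estimate (Corollary \ref{Cor::Prem::HeidemanCor}) yields $\|(1+2^{\max(j,j')}|z|)^N(\theta_j\ast\psi_{j'})\|_{L^1}\lesssim_{M,N} 2^{-M|j-j'|}$; combined with the distance lower bound $\dist(V,P_{l'})\gtrsim 2^{-l'}$ (analogous to \eqref{Eqn::Smooth::1Decay::DistFormTmp}) whenever $P_{l'}$ is separated from $V$, and with $\max(j,j')\ge j$, this delivers the per-summand decay
\[
  \|\theta_j\ast\psi_{j'}\|_{L^1(\{|z|>2^{-l'}\})} \lesssim_{M} 2^{-M(|j-j'|+(j-l'))}
\]
after choosing $N\ge M$. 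Summing over the allowed ranges of $(j',l')$ then yields the two claimed estimates.

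The principal obstacle lies in the boundary values of $l'$ where $\dist(V,P_{l'})=0$ and the distance-based factor degenerates: namely $l'\in\{j+1,\dots, j+R_0+1\}$ in the $P_{>j}$-case (the strips lying inside $P_{>j}$), where $(j-l')$ is bounded by $R_0+1$ so the missing $2^{-M(j-l')}$ factor is a harmless constant absorbed into $C$; and the truly delicate diagonal case $l'=l$ with $l<j$ in the $P_l$-case, where $(j-l)$ is unbounded and the required $2^{-M(j-l)}$ decay must be extracted from a finer argument exploiting the moment vanishing of $\theta_j\ast\psi_{j'}$ combined with the enforced bound $|x-y|\lesssim 2^{-l}$ for $x,y\in P_l$. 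A minor technicality concerns $j'=0$: since $\psi_0\notin\Ss_0$, Corollary \ref{Cor::Prem::HeidemanCor} does not directly apply, but Proposition \ref{Prop::Prem::Heideman} with $g=\psi_0$ gives $\|(1+2^j|z|)^N(\theta_j\ast\psi_0)\|_{L^1}\lesssim 2^{-Mj}$, which matches the $|j-j'|$-decay at $j'=0$.
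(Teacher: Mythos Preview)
Your strategy coincides with the paper's: expand via $\theta_j\ast f=\sum_{j'\ge0}\theta_j\ast\psi_{j'}\ast(\phi_{j'}\ast f)$, truncate the $l'$-range by the cone geometry (Lemma~\ref{Lem::Smooth::StripPlus}), and bound each piece with Young's inequality plus the packaged Heideman estimate of Lemma~\ref{Lem::Smooth::Psi2Decay}, which already covers the $j'=0$ case you flag. For \eqref{Eqn::Smooth::SecondDecomp::Result1} this succeeds exactly as you outline: the $l'$ with $\dist(P_{>j},P_{l'})=0$ are $l'\ge j$, where $j-l'\le0$ and the claimed factor $2^{-M(j-l')}\ge1$ is automatic.

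The diagonal obstacle you isolate in \eqref{Eqn::Smooth::SecondDecomp::Result2} is genuine, but your proposed cure cannot work---because the extra factor $2^{-M(j-l)}$ at $l'=l$ is simply not there. Take $\omega=\R^n_+$, $p=1$, $l=0$, $x_0=(0,\dots,0,1)\in P_0$, and $f=\delta_{x_0}$. Since $\supp\phi_{j'}\subset\{z_n<0\}$ one has $\phi_{j'}(\cdot-x_0)\big|_{P_{l'}}=0$ for every $l'\le-1$, so the right side of \eqref{Eqn::Smooth::SecondDecomp::Result2} collapses to $l'=0$ and is $\lesssim 2^{-Mj}$; yet $\|\theta_j(\cdot-x_0)\|_{L^1(P_0)}\to\|\theta_1\|_{L^1}>0$ as $j\to\infty$. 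Moment vanishing of $\theta_j\ast\psi_{j'}$ yields decay only against \emph{smooth} test data, not against $\phi_{j'}\ast f$ at the matched scale $j'\approx j$, so no refinement of the kind you sketch can recover the missing $2^{-M(j-l)}$. The paper's own proof glosses over the same point: for $l'\in\{l-1,l\}$ the distance in \eqref{Eqn::Smooth::SecondDecomp::DistTmp} is zero and Lemma~\ref{Lem::Smooth::Psi2Decay} returns only $2^{-M|j-j'|}$, but the subsequent display records the stronger exponent $\max(0,j-l',j'-l')$. What the argument actually establishes is the estimate with the two boundary terms $l'\in\{l-1,l\}$ carrying only $2^{-M|j-j'|}$; this weaker version suffices for Corollary~\ref{Cor::Smooth::DecayCor}, since Proposition~\ref{Prop::Smooth::1Decay} already supplies an independent $2^{-M(j-l)}\approx 2^{-M(j-l')}$ factor and $M$ is arbitrary.
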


To prove Proposition~\ref{Prop::Smooth::SecondDecomp}, we consider the decomposition $\theta_j\ast f=\sum_{j'=0}^\infty\theta_j\ast \psi_{j'}\ast \phi_{j'}\ast f$. In the use of Young's inequality we need to deal with the $L^1$-norm of $\theta_j\ast \psi_{j'}$:
\begin{lemma}\label{Lem::Smooth::Psi2Decay}
	Let $\theta=(\theta_j)_{j=1}^\infty,\psi=(\psi_j)_{j=1}^\infty\in\Gf$ and let $\psi_0\in\Ss(\R^n)$. Then for any $M>0$ there are $C_1=C_1(\theta.\psi_0,M)>0$ and $C_2=C_2(\theta,\psi,M)>0$ such that
	\begin{align} 
	\label{Eqn::Smooth::Psi2Decay::phi}
	\int_{|x|>2^{-l}} |\theta_j \ast \psi_0(x)|dx\le & C_1 2^{-M(j+\max(0,j-l)) }, &\quad\forall j\in\Z_+, \quad  l\in\Z\cup\{\infty\};
	\\
	\label{Eqn::Smooth::Psi2Decay::psi}
	\int_{|x|>2^{-l}} |\theta_j \ast \psi_{j'} (x)|dx\le & C_2 2^{-M(|j-j'|+\max(0,j-l,j'-l)) },&\forall j,j'\in\Z_+,\quad l\in\Z\cup\{\infty\}. 
	\end{align}
Here, by $l=\infty$, we mean $\|\theta_j\ast\psi_0\|_{L^1 (\R^n)}\le C_12^{-Mj}$ and $\|\theta_j\ast\psi_{j'}\|_{L^1 (\R^n)}\le C_22^{-M|j-j'|}$.
\end{lemma}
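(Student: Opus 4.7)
The plan is to derive both estimates from the Heideman-type bounds already established in Section~\ref{Section::Prelim} (Proposition~\ref{Prop::Prem::Heideman} and Corollary~\ref{Cor::Prem::HeidemanCor}), combined with a standard weight trick: on the set $\{|x| > 2^{-l}\}$ we have the pointwise lower bound $(1 + 2^{j}|x|)^{M} \ge 2^{M \max(0, j-l)}$, so any estimate carrying such a weight inside the integral converts into an unweighted estimate over $\{|x|>2^{-l}\}$ at the cost of the factor $2^{-M\max(0, j-l)}$.

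For \eqref{Eqn::Smooth::Psi2Decay::phi}, I would apply Proposition~\ref{Prop::Prem::Heideman} with $\eta = \theta$, $g = \psi_0$ and $k = l$ to get
\[
\int_{|x|>2^{-l}}|\theta_j\ast\psi_0(x)|(1+2^j|x|)^{M}dx \le C_M\|\psi_0\|_{2M+n}\, 2^{-M\max(j,j-l)}.
\]
Absorbing the weight via the pointwise lower bound yields an unweighted estimate with combined exponent $\max(j, j-l) + \max(0, j-l)$. A short case check shows this is at least $j + \max(0, j-l)$: when $l \ge 0$ we have $\max(j, j-l) = j$, while when $l < 0$ we have $\max(j, j-l) = j - l \ge j$. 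The case $l = \infty$ reduces to integration over all of $\R^n$, with $\max(0, j - l)$ interpreted as $0$, so the weight factor is $1$ and we recover $\|\theta_j \ast \psi_0\|_{L^1} \le C\, 2^{-Mj}$ directly from \eqref{Eqn::Prem::Heideman::Rmk}.

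For \eqref{Eqn::Smooth::Psi2Decay::psi}, I would apply Corollary~\ref{Cor::Prem::HeidemanCor} (after a trivial index shift to pass from the $\Gf$-indexing starting at $1$ to the Corollary's indexing over $\Z$), obtaining
\[
\int_{\R^n}|\theta_j\ast\psi_{j'}(x)|(1+2^{\max(j,j')}|x|)^{M}dx \le C_M\, 2^{-M|j-j'|}.
\]
Restricting the integration to $\{|x| > 2^{-l}\}$ and using the pointwise lower bound $(1+2^{\max(j,j')}|x|)^M \ge 2^{M\max(0,\,\max(j,j')-l)} = 2^{M\max(0, j-l, j'-l)}$ on that region yields the claim at once, with $l = \infty$ again being the unweighted case. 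I do not anticipate any real obstacle; the argument is essentially bookkeeping in the exponents, with all the analytic content packaged into the Heideman-type inequalities of Section~\ref{Section::Prelim}.
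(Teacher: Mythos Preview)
Your proposal is correct and follows essentially the same approach as the paper: both derive the lemma from the Heideman-type estimates of Section~\ref{Section::Prelim}. For \eqref{Eqn::Smooth::Psi2Decay::phi} the paper likewise invokes Proposition~\ref{Prop::Prem::Heideman}, though your write-up is more explicit about the weight trick and the exponent case-check; for \eqref{Eqn::Smooth::Psi2Decay::psi} the paper instead reduces to \eqref{Eqn::Smooth::Psi2Decay::phi} by the scaling identity $\theta_j\ast\psi_{j'}(x)=2^{j'n}(\theta_{j-j'}\ast\psi_0)(2^{j'}x)$, whereas you appeal directly to Corollary~\ref{Cor::Prem::HeidemanCor} (whose proof already packages that same scaling) and then apply the weight lower bound---a slightly more streamlined bookkeeping but not a genuinely different argument.
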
 This is a special case to Proposition~\ref{Prop::Prem::Heideman}.
\begin{proof}
	We have $|\theta_j \ast \psi_0(x)|\le |\theta_j \ast \psi_0(x)|(1+|x|)^M$ for any $M>0$. So \eqref{Eqn::Smooth::Psi2Decay::phi} is the special case to \eqref{Eqn::Prem::Heideman::Main} when $l<\infty$ and \eqref{Eqn::Prem::Heideman::Rmk} when $l=\infty$.

	Note that \eqref{Eqn::Smooth::Psi2Decay::psi} is obtained by \eqref{Eqn::Smooth::Psi2Decay::phi} via the scaling argument: we take $\psi_0(x):=2^{-n}\psi_1(2^{-1}x)$, then $\psi_{j'}(x)=2^{j'n}\psi_0(2^{j'}x)$ for $j'\ge1$. Therefore by \eqref{Eqn::Smooth::Psi2Decay::phi},
	\begin{equation}\label{Eqn::Smooth::Psi2Decay::Tmp1}
	\int_{|x|>2^{-l}} |\theta_j \ast \psi_0(x)|dx\le C_1(\theta,\psi_0,M) 2^{-M(j+\max(0,j-l)) },\quad\forall j\in\N,\quad l\in\Z.
	\end{equation}
	
	When $j\ge j'$, by the scaling assumptions on $\theta$ and $\psi$ we have
	\begin{align*}
	&\theta_j\ast\psi_{j'}(x)=\int 2^{jn}\theta_0(2^j(x-y))2^{j'n}\psi_0(2^{j'}y)dy=\int 2^{j'n}\theta_{j-j'}(2^{j'}x-\tilde y)\psi_0(\tilde y)d\tilde y=2^{j'n}(\theta_{j-j'}\ast\psi_0)(2^{j'}x).
	\end{align*}
	
	Applying \eqref{Eqn::Smooth::Psi2Decay::Tmp1} we get 
	\begin{align*}
	\int_{|x|>2^{-l}} |\theta_j\ast\psi_{j'}(x)|dx=\int_{|x|>2^{j'-l}}|\theta_{j-j'} \ast \psi_0(\tilde x)|d\tilde x\le C_12^{-M(j-j'+\max(0,j-j'+j'-l))}.
	\end{align*}
	We have $2^{-M(j-j'+\max(0,j-j'+j'-l))}=2^{-M(|j-j'|+\max(0,j-l))}=2^{-M(|j-j'|+\max(0,j-l,j'-l))}$ since $j\ge j'$. This proves \eqref{Eqn::Smooth::Psi2Decay::psi} for $j\ge j'$. By symmetry we can swap $\theta$ for $\psi$, which gives \eqref{Eqn::Smooth::Psi2Decay::psi} for $j'\ge j$.
\end{proof}

\begin{proof}[Proof of Proposition~\ref{Prop::Smooth::SecondDecomp}]By assumption $\supp\phi_j,\supp\psi_j\subset-\Kb\cap\{x_n<-2^{-j}\}$ for all $j\ge0$, and we have decomposition $\theta_j\ast f=\sum_{j'=0}^\infty\theta_j\ast \psi_{j'}\ast\phi_{j'}\ast f$. 
	
We now claim to have the following decompositions:
	\begin{align}\label{Eqn::Smooth::SecondDecomp::Decomp1}
	\1_{P_{>j}} \cdot  (\theta_j\ast f)&=\1_{P_{>j}}  \sum_{l'=-\infty}^{j+R_0}\sum_{j'=0}^\infty\theta_j\ast\psi_{j'}\ast(\1_{P_{l'}}\cdot(\phi_{j'}\ast f)),& j\in\Z_+.
	\\\label{Eqn::Smooth::SecondDecomp::Decomp2}
	\1_{P_l} \cdot  (\theta_j\ast f)&=\1_{P_l}\sum_{l'=-\infty}^{l}\sum_{j'=0}^\infty\theta_j\ast\psi_{j'}\ast(\1_{P_{l'}}\cdot(\phi_{j'}\ast f)),&j\in\Z_+,\quad l\in\Z.
	\end{align}
	
	Indeed by ignoring zero measure sets, we have the following disjoint unions for every $j,l\in\Z$:
	\begin{align}\label{Eqn::Smooth::SecondDecomp::DisUni1}
	\R^n&\textstyle=\{x_n-\rho(x')<2^{-\frac12-j-R_0}\}\cup\bigcup_{l'=-\infty}^{j+R_0}P_{l'},
	\\
	&\label{Eqn::Smooth::SecondDecomp::DisUni2}
	\textstyle=\{x_n-\rho(x')<2^{-\frac12-l}\}\cup\bigcup_{l'=-\infty}^lP_{l'}.
	\end{align}
	For \eqref{Eqn::Smooth::SecondDecomp::DisUni1}, by Lemma~\ref{Lem::Smooth::StripPlus} we have 
	\begin{align*}
	\supp(\theta_j\ast\psi_{j'})+\{x_n-\rho(x')<2^{-\frac12-j-R_0}\}\subset&(-\Kb\cap\{x_n<-2^{-j}\})+\{x_n-\rho(x')<2^{-\frac12-j-R_0}\}
	\\
	\subseteq&\{x_n-\rho(x')<2^{-\frac12-j-R_0}-2^{-j-R_0}\}\subset\overline\omega^c.
	\end{align*}
	So $(\supp(\theta_j\ast\psi_{j'})+\{x_n-\rho(x')<2^{-\frac12-j-R_0}\})\cap P_{>j+R_0}=\emptyset$, which gives \eqref{Eqn::Smooth::SecondDecomp::Decomp1}. %and we have $\1_{P_{>j+R_0}}\sum_{j'=0}^\infty\theta_j\ast\psi_{j'}\ast(\1_{\{x_n-\rho(x')<2^{-\frac12-j-R_0}\}}\cdot(\phi_{j'}\ast f))=0$

	Similarly for \eqref{Eqn::Smooth::SecondDecomp::DisUni2}, since $-\Kb+\{x_n-\rho(x')<2^{-\frac12-l}\}\subseteq\{x_n-\rho(x')<2^{-\frac12-l}\}$ is disjoint with $ P_l\subset\{x_n-\rho(x')>2^{-\frac12-l}\}$, we have $$(\supp(\theta_j\ast\psi_{j'})+\{x_n-\rho(x')<2^{-\frac12-l}\})\cap P_l=\emptyset,$$ which gives \eqref{Eqn::Smooth::SecondDecomp::Decomp2}.
	
	\medskip
	We now prove \eqref{Eqn::Smooth::SecondDecomp::Result2} by using \eqref{Eqn::Smooth::SecondDecomp::Decomp1}. 
	The proof for \eqref{Eqn::Smooth::SecondDecomp::Result1} is almost identical but simpler, where we replace \eqref{Eqn::Smooth::SecondDecomp::Decomp1} by \eqref{Eqn::Smooth::SecondDecomp::Decomp2}. We leave the proof of \eqref{Eqn::Smooth::SecondDecomp::Result1} to readers.
	
	Similar to \eqref{Eqn::Smooth::1Decay::DistFormTmp} we have
	\begin{equation}\label{Eqn::Smooth::SecondDecomp::DistTmp}
	\dist(P_l,P_{l'})\ge \tfrac1{\sqrt2}(2^{-\frac12-l'}-2^{\frac12-l})\ge 2^{-2-l'},\quad l'\le l-2;\quad \dist(P_l,P_{l'})=0=2^{-\infty},\quad l'\in\{l-1,l\}.
	\end{equation}
	
	By Young's inequality (since $1\le p\le\infty$), we have
	\begin{align*}
	\|\theta_j\ast f\|_{L^p(P_l)}
	&\le\sum_{l'\le l}\sum_{j'=0}^\infty\|\theta_j\ast\psi_{j'}\ast(\1_{P_{l'}} \cdot  (\phi_{j'}\ast f)\|_{L^p(P_l)}
	\\ &\le\sum_{l'\le l}\sum_{j'=0}^\infty\|\theta_j\ast\psi_{j'}\|_{L^1\{|x|>\dist(P_l,P_{l'})\}}\|\phi_{j'}\ast f\|_{L^p(P_{l'})}.
	\end{align*}
	
	Using \eqref{Eqn::Smooth::SecondDecomp::DistTmp} and applying \eqref{Eqn::Smooth::Psi2Decay::phi} on $\|\theta_j\ast\psi_0\|_{L^1}$ and \eqref{Eqn::Smooth::Psi2Decay::psi} on $\|\theta_j\ast\psi_{j'}\|_{L^1}$, $j'\ge1$, we get
	\begin{align*}
	\|\theta_j\ast f\|_{L^p(P_l)}
	&\lesssim_{\theta,\psi,\phi,M}\sum_{l'\le l}\sum_{j'=0}^\infty 2^{-M(|j-j'|+\max(0,j-l',j'-l'))}\|\phi_{j'}\ast f\|_{L^p(P_{l'})}
	\\ &\le \sum_{l'\le l}\sum_{j'=0}^\infty 2^{-M(|j-j'|+j-l')}\|\phi_{j'}\ast f\|_{L^p(P_{l'})}.
	\end{align*}
	
	This finishes the proof of \eqref{Eqn::Smooth::SecondDecomp::Result2}.
\end{proof}

By summing up the estimates in Propositions \ref{Prop::Smooth::1Decay} and \ref{Prop::Smooth::SecondDecomp}, we get the following:
\begin{cor}\label{Cor::Smooth::DecayCor}
	Let $R_0\in\Z_+$, $\omega=\{x_n>\rho(x')\}$, $P_k,P_{>k}$ $(k\in\Z)$, $\eta=(\eta_j)_{j=1}^\infty,\theta=(\theta_j)_{j=1}^\infty\in\Gf(-\Kb)$ and $\phi=(\phi_j)_{j=0}^\infty$ be as in Proposition~\ref{Prop::Smooth::SecondDecomp}.
	Then for any $s\in\R$ and $M>0$ there is a $C=C(R_0,s,M,\eta,\theta,\phi)>0$ such that for $1\le p\le\infty$, $j\in\Z_+$, $k\in\Z$ and $l\le j$,
	\begin{align}\label{Eqn::Smooth::DecayCor::Est1}
	2^{js}\|\eta_j\ast(\1_{P_{>j}} \cdot  (\theta_j\ast f))\|_{L^p(S_k)}&\le C\sum_{l'\in\Z} 2^{-M(\max(0,j-k)+|j-l'|)}\|\sup\limits_{q\in\N}2^{qs}|\phi_l\ast f|\|_{L^p(P_{l'})};
	\\
	\label{Eqn::Smooth::DecayCor::Est2}
	2^{js}\|\eta_j\ast(\1_{P_l} \cdot  (\theta_j\ast f))\|_{L^p(S_k)}&\le C\sum_{l'\in\Z} 2^{-M(\max(0,j-k)+|j-l'|+|j-l|)}\|\sup\limits_{q\in\N}2^{qs}|\phi_{q}\ast f|\|_{L^p(P_{l'})}.
	\end{align}
\end{cor}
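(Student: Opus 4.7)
The proof is essentially a mechanical combination of Proposition \ref{Prop::Smooth::1Decay} and Proposition \ref{Prop::Smooth::SecondDecomp}, together with the straightforward observation that $2^{j's}\|\phi_{j'}\ast f\|_{L^p(P_{l'})}\le\|\sup_q 2^{qs}|\phi_q\ast f|\|_{L^p(P_{l'})}$. The plan is as follows.

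First I would fix an auxiliary exponent $M':=M+|s|+1$ and apply the two propositions with $M'$ in place of $M$. For the first estimate, combining \eqref{Eqn::Smooth::1Decay::P>} (with $g=\theta_j\ast f$) and \eqref{Eqn::Smooth::SecondDecomp::Result1} gives
\begin{equation*}
\|\eta_j\ast(\1_{P_{>j}}\cdot(\theta_j\ast f))\|_{L^p(S_k)}\lesssim 2^{-M'\max(0,j-k)}\sum_{l'\le j+R_0+1}\sum_{j'=0}^\infty 2^{-M'(|j-j'|+(j-l'))}\|\phi_{j'}\ast f\|_{L^p(P_{l'})}.
\end{equation*}
Multiplying by $2^{js}$ and writing $2^{js}=2^{(j-j')s}\cdot 2^{j's}$, the factor $2^{(j-j')s}2^{-M'|j-j'|}\le 2^{-M|j-j'|}$ by the choice of $M'$, and $2^{j's}\|\phi_{j'}\ast f\|_{L^p(P_{l'})}$ is dominated by the Peetre-type supremum evaluated on $P_{l'}$. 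Summing the resulting geometric series $\sum_{j'\ge 0}2^{-M|j-j'|}$ absorbs the $j'$-variable into a constant.

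Next I would convert the truncated sum $\sum_{l'\le j+R_0+1}2^{-M'(j-l')}$ into the symmetric form $\sum_{l'\in\Z}2^{-M|j-l'|}$. For $l'\le j$ one has $j-l'=|j-l'|$ and $2^{-M'|j-l'|}\le 2^{-M|j-l'|}$; for the finitely many indices $j<l'\le j+R_0+1$ the exponent $|j-l'|\le R_0+1$ is bounded, so the factor $2^{M'(l'-j)}\le 2^{M'(R_0+1)}$ is absorbed into the constant and $2^{-M|j-l'|}$ remains bounded below by a constant depending only on $M$ and $R_0$. Adding trivial zero terms for $l'>j+R_0+1$ yields the desired estimate \eqref{Eqn::Smooth::DecayCor::Est1}.

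The argument for \eqref{Eqn::Smooth::DecayCor::Est2} is the same, using \eqref{Eqn::Smooth::1Decay::Pl} and \eqref{Eqn::Smooth::SecondDecomp::Result2} in place of the previous two estimates. The hypothesis $l\le j$ gives $j-l=|j-l|$ immediately, and the summation range $l'\le l\le j$ makes $j-l'=|j-l'|$ automatic, so no sign-correction step is needed and one obtains the triple decay factor $2^{-M(\max(0,j-k)+|j-l|+|j-l'|)}$ directly. There is no genuine obstacle here; the only delicate point is keeping track of the exponents when trading the factor $2^{js}$ against the $2^{-M'|j-j'|}$ decay, which is precisely why one starts with $M'>M+|s|$.
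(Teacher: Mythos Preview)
Your proposal is correct and follows essentially the same approach as the paper's proof: combine Propositions \ref{Prop::Smooth::1Decay} and \ref{Prop::Smooth::SecondDecomp} with an enlarged exponent, trade $2^{js}$ against the $|j-j'|$ decay to pass to the supremum $\sup_q 2^{qs}|\phi_q\ast f|$, sum out $j'$, and absorb the finitely many indices $j<l'\le j+R_0+1$ into the constant. The only cosmetic difference is that the paper takes $\tilde M=\max(M,1+|s|)$ whereas you take $M'=M+|s|+1$; both choices serve the same purpose.
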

\begin{proof}
	By taking $g=\theta_j\ast f$ in \eqref{Eqn::Smooth::1Decay::P>} and using \eqref{Eqn::Smooth::SecondDecomp::Result1}, we have for every $\tilde M>0$
	\begin{equation} \label{Eqn::Smooth::DecayCor::Tmp1}
	\begin{aligned}
	&2^{js}\|\eta_j\ast(\1_{P_{>j}} \cdot  (\theta_j\ast f))\|_{L^p(S_k)}
	\\
	  &\quad \lesssim_{R_0,\eta,\tilde M} 2^{js}2^{-\tilde M\max(0,j-k)}\|\theta_j\ast f\|_{L^p(P_{>j})}
	\\
	&\quad \lesssim_{R_0,\theta,\phi,\tilde M}\sum_{l'=-\infty}^{j+R_0}\sum_{j'=0}^\infty2^{js-\tilde M(\max(0,j-k)+|j-j'|+(j-l'))}\|\phi_{j'}\ast f\|_{L^p(P_{l'})}\\
	&\quad  \le \sum_{l'=-\infty}^{j+R_0}\sum_{j'=0}^\infty2^{js-\tilde  M(\max(0,j-k)+|j-j'|+(|j-l'|+R_0+1))}\|\phi_{j'}\ast f\|_{L^p(P_{l'})}
	\\
	&\quad \lesssim_{R_0} 
	\sum_{l'\in\Z}\sum_{j'=0}^\infty2^{-\tilde M(\max(0,j-k)+|j-l'|)}2^{js-\tilde M|j-j'|}\|\phi_{j'}\ast f\|_{L^p(P_{l'})}.
	\end{aligned}
	\end{equation}
	Similarly, by taking  $g=\theta_j\ast f$ in \eqref{Eqn::Smooth::1Decay::Pl} and using \eqref{Eqn::Smooth::SecondDecomp::Result2} we have, for $l\le j$
	\begin{equation} \label{Eqn::Smooth::DecayCor::Tmp2}
	\begin{aligned} 
	& 2^{js}\|\eta_j\ast(\1_{P_{l}} \cdot  (\theta_j\ast f))\|_{L^p(S_k)}
	\lesssim_{R_0,\eta,\tilde M}2^{-\tilde M(\max(0,j-k)+j-l)}\|\theta_j\ast f\|_{L^p(P_l)}
	\\
	&\quad \lesssim _{R_0,\theta,\phi,\tilde M}\sum_{l'=-\infty}^l\sum_{j'=0}^\infty2^{js-\tilde M(\max(0,j-k)+(j-l)+|j-j'|+(j-l'))}\|\phi_{j'}\ast f\|_{L^p(P_{l'})}
	\\
 	&\quad \le \sum_{l'\in\Z}2^{-\tilde M(\max(0,j-k)+|j-l|+|j-l'|)}\sum_{j'=0}^\infty2^{js-\tilde M|j-j'|}\|\phi_{j'}\ast f\|_{L^p(P_{l'})}.
	\end{aligned}
	\end{equation}
	Here we use the fact $j-l=|j-l|$.
	
	Now for $\tilde M\ge 1+|s|$ we have $2^{js-\tilde M|j-j'|}\le 2^{-|j-j'|+j's}$, so in this case
	\begin{equation} \label{Eqn::Smooth::DecayCor::Tmp3}
	\begin{aligned}
	&\sum_{j'=0}^\infty2^{js-\tilde M|j-j'|}\|\phi_{j'}\ast f\|_{L^p(P_{l'})}    
	\le\sum_{j'=0}^\infty2^{-|j-j'|}\|2^{j's}\phi_{j'}\ast f\|_{L^p(P_{l'})}
	\\ &\qquad\le\sum_{j'=0}^\infty2^{-|j-j'|}\|\sup\limits_{q\in\N}2^{qs}|\phi_{q}\ast f|\|_{L^p(P_{l'})}
	\lesssim\|\sup\limits_{q\in\N}2^{qs}|\phi_{q}\ast f|\|_{L^p(P_{l'})}.
	\end{aligned}
	\end{equation}
	For a given $M>0$, we take $\tilde M=\max(M,1+|s|)$. Combining \eqref{Eqn::Smooth::DecayCor::Tmp1} and \eqref{Eqn::Smooth::DecayCor::Tmp3} we get \eqref{Eqn::Smooth::DecayCor::Est1}; combining \eqref{Eqn::Smooth::DecayCor::Tmp2} and \eqref{Eqn::Smooth::DecayCor::Tmp3} we get \eqref{Eqn::Smooth::DecayCor::Est2}.
\end{proof}

We are now ready to prove Theorem~\ref{Thm::MainThm} \ref{Item::MainThm::Smoothing}.
\begin{proof}[Proof of Theorem~\ref{Thm::MainThm} \ref{Item::MainThm::Smoothing}]
	First, we claim that it is sufficient to prove the case $m=0$.
	
	Indeed, suppose we already have $T^{\eta,\theta,r}_\omega:\Fs_{p\infty}^{s}(\omega)\to L^p(\overline\omega^c,\dist_\omega^{r-s})$ for all $1\le p\le\infty$, $s<r$ and all $\eta,\theta\in\Gf(-\Kb)$. For $\alpha\in\N^n$, we define $\eta^\alpha=(\eta^\alpha_j)_{j=1}^\infty$ as follows
	\begin{equation*}
	\eta^\alpha_j(x):=2^{-j|\alpha|}(\partial^\alpha\eta_j)(x),\quad j\in\Z_+.
	\end{equation*}
	Clearly $\supp\eta^\alpha_j\subseteq\supp\eta_j\subset-\Kb$. Also $\eta_j^{\alpha}$ satisfies the scaling property
	\begin{equation*}
	\eta^\alpha_j(x)=2^{-j|\alpha|}2^{(j-1)n}(\partial^\alpha\eta_1(2^{j-1}\cdot))(x)=2^{(j-1)n}(\partial^\alpha\eta_1)(2^{j-1}x)=2^{(j-1)n}\eta^\alpha_1(2^{j-1}x),\quad j\ge1.
	\end{equation*}
	We conclude that $\eta^\alpha\in\Gf(-\Kb)$. On the other hand, 
	\begin{equation*}
	\partial^\alpha T^{\eta,\theta,r}_\omega f=\sum_{j=1}^\infty2^{jr}(\partial^\alpha\eta_j)\ast(\1_\omega \cdot  (\theta_j\ast f))=\sum_{j=1}^\infty2^{j(r+|\alpha|)}\eta^\alpha_j\ast(\1_\omega \cdot (\theta_j\ast f))=T^{\eta^\alpha,\theta,r+|\alpha|}_\omega f,\quad f\in\Ss'(\omega).
	\end{equation*}
	
	Now for $m\ge1$ and $s<m+r$, we have $s+|\alpha|-m<|\alpha|+r$. By assumption, we have boundedness $[f\mapsto T^{\eta^\alpha,\theta,r+|\alpha|}_\omega f|_{\overline{\omega}^c}]:\Fs_{p\infty}^{s+|\alpha|-m}(\omega)\to L^p(\overline\omega^c,\dist_\omega^{r+|\alpha|-(s+|\alpha|-m)})=L^p(\overline\omega^c,\dist_\omega^{m+r-s})$. Therefore
	\begin{align*}
	\|T^{\eta,\theta,r}_\omega f\|_{W^{m,p}(\overline\omega^c,\dist_\omega^{m+r-s})}\approx &\sum_{|\alpha|\le m}\|\partial^\alpha T^{\eta,\theta,r}_\omega f\|_{L^p(\overline\omega^c,\dist_\omega^{m+r-s})}=\sum_{|\alpha|\le m}\|T^{\eta^\alpha,\theta,r+|\alpha|}_\omega f\|_{L^p(\overline\omega^c,\dist_\omega^{m+r-s})}
	\\
	\lesssim&_{\eta,\theta,\omega,r,s,p}\sum_{|\alpha|\le m}\|f\|_{\Fs_{p\infty}^{s+|\alpha|-m}(\omega)}\lesssim\sum_{|\alpha|\le m}\|f\|_{\Fs_{p\infty}^{s}(\omega)}\approx\|f\|_{\Fs_{p\infty}^{s}(\omega)}.
	\end{align*}
	This completes the proof for $m\ge1$.
	
	\medskip
	To prove the case for $m=0$, write $\omega=\{(x',x_n):x_n>\rho(x')\}$ where $\|\nabla\rho\|_{L^\infty}<1$. There is a (sufficiently large) $R_0\in\Z_+$ such that
	\begin{equation*}
	\|\nabla\rho\|_{L^\infty}\le1-2^{-R_0}.
	\end{equation*}
	
	By Lemma~\ref{Lem::Smooth::ScalingRmk}, we can assume that $\supp\eta_j,\supp\theta_j\subseteq-\Kb\cap\{x_n<-2^{-j}\}$ for all $j\ge1$.
	
	Let $(\phi_j,\psi_j)_{j=0}^\infty$ be a $\Kb$-pair (see Convention~\ref{Conv::Intro::KDyaPair}). By \cite[Theorem~3.2]{ExtensionLipschitz} we have
	\begin{equation}\label{Eqn::Smooth::PfThm::ReductionTmp1}
	\big\|\sup\limits_{j\in\N}2^{js}|\phi_j\ast f|\big\|_{L^p(\omega)}\approx_{\omega,\phi,p,s}\|f\|_{\Fs_{p\infty}^s(\omega)},\quad 1\le p\le\infty,\quad s\in\R.
	\end{equation}
	% this is still true for $p=\infty$. Because by Lemma~\ref{Lem::Prem::EqvNorm} and Definition~\ref{Defn::Prem::BsFsDef} $\Fs_{\infty\infty}^s=\Bs_{\infty\infty}^s$ and $\|f\|_{\Bs_{\infty\infty}^s(\omega)}\approx\sup\limits_{j\in\N}2^{js}\|\phi_j\ast f|\|_{L^\infty(\omega)}=\|\sup\limits_{j\in\N}2^{js}|\phi_j\ast f|\|_{L^\infty(\omega)}$ (cf. Remark~\ref{Rmk::Prem::BsFsRmk} \ref{Item::Prem::BsFsRmk::FsPInf}).
	
	Let $P_k,P_{>k},S_k$ be given in Notation~\ref{Note::Intro::Strips}. By ignoring a zero measure set we have disjoint unions $\omega=\coprod_{k\in\Z}P_k$ and $\overline\omega^c=\coprod_{k\in\Z}S_k$. Therefore,
	\begin{equation}\label{Eqn::Smooth::PfThm::ReductionTmp2}
	\big\|\sup\limits_{j\in\N}2^{js}|\phi_j\ast f|\big\|_{L^p(\omega)}=\big\|\big(\|\sup\limits_{q\in\N}2^{sq}|\phi_q\ast f|\|_{L^p(P_l)}\big)_{l\in\Z}\big\|_{\ell^p}.
	\end{equation}
	
	By \eqref{Eqn::Intro::DistEqv} we have for $x\in\bigcup_{k\in\Z}S_k$ (hence for almost every $x\in\overline\omega^c$) that
	\begin{equation}\label{Eqn::Smooth::PfThm::ReductionTmp3}
	\tfrac12\sum_{k\in\Z}2^{-k}\1_{S_k}(x)=\tfrac1{\sqrt2}\sum_{k\in\Z}2^{-\frac12-k}\1_{S_k}(x)\le\dist(x,\omega)\le\sum_{k\in\Z}2^{\frac12-k}\1_{S_k}(x)=\sqrt2\sum_{k\in\Z}2^{-k}\1_{S_k}(x).
	\end{equation}
	Therefore by enlarging the constant we can replace $\dist_\omega$ with $\sum_{k\in\Z}2^{-k}\1_{S_k}$.
	
	Our goal is to show that for such $\eta,\theta,\phi$, and for $s<r$ we have
	\begin{equation}\label{Eqn::Smooth::PfThm::Goal}
	\big\|\big(2^{k(s-r)}\|T^{\eta,\theta,r}_\omega f\|_{L^p(S_k)}\big)_{k\in\Z}\big\|_{\ell^p}\lesssim_{R_0,\eta,\theta,\phi,r,s}\big\|\big(\|\sup\limits_{q\in\N}2^{sq}|\phi_q\ast f|\|_{L^p(P_l)}\big)_{l\in\Z}\big\|_{\ell^p}.
	\end{equation}
	Assuming for now that \eqref{Eqn::Smooth::PfThm::Goal} holds, by \eqref{Eqn::Smooth::PfThm::ReductionTmp1}, \eqref{Eqn::Smooth::PfThm::ReductionTmp2} and \eqref{Eqn::Smooth::PfThm::ReductionTmp3} we get the boundedness $[f\mapsto( T^{\eta,\theta,r}_\omega f)|_{\overline{\omega}^c}]:\Fs_{p\infty}^{s}(\omega)\to L^p(\overline{\omega}^c,\dist_\omega^{r-s})$ for $1\le p\le\infty$, $r\in\R$ and $s<r$. This completes the proof. 
	
	We now prove \eqref{Eqn::Smooth::PfThm::Goal}. 
	Using \eqref{Eqn::Smooth::DisjointStripUnion}, for each $j\ge1$ we have decomposition $\1_\omega=\1_{P_{>j}}+\sum_{l=-\infty}^j\1_{P_l}$ almost everywhere, so
	\begin{equation*}
	T^{\eta,\theta,r}_\omega f=\sum_{j=1}^\infty 2^{jr}\eta_j\ast(\1_\omega \cdot  (\theta_j\ast f))=\sum_{j=1}^\infty\Big( 2^{jr}\eta_j\ast(\1_{P_{>j}} \cdot  (\theta_j\ast f))+\sum_{l=-\infty}^j 2^{jr}\eta_j\ast(\1_{P_l} \cdot  (\theta_j\ast f))\Big).
	\end{equation*}
It follows that for each $k\in\Z$,
	\begin{multline*}
	2^{k(s-r)}\|T^{\eta,\theta,r}_\omega f\|_{L^p(S_k)}\le\sum_{j=1}^\infty 2^{(k-j)(s-r)}2^{js}\Big(\|\eta_j\ast(\1_{P_{>j}} \cdot (\theta_j\ast f))\|_{L^p(S_k)}
 \\+\sum_{l=-\infty}^j\|\eta_j\ast(\1_{P_l} \cdot  (\theta_j\ast f))\|_{L^p(S_k)}\Big).
	\end{multline*}
	
	Applying Corollary \ref{Cor::Smooth::DecayCor} to  $2^{js}\|\eta_j\ast(\1_{P_{>j}} \cdot (\theta_j\ast f))\|_{L^p(S_k)}$ (via \eqref{Eqn::Smooth::DecayCor::Est1}) and $2^{js}\|\eta_j\ast(\1_{P_l}\cdot(\theta_j\ast f))\|_{L^p(S_k)}$ (via \eqref{Eqn::Smooth::DecayCor::Est2}), we get, for every $M>0$,
	\begin{align*}
	&2^{k(s-r)}\|T^{\eta,\theta,r}_\omega f\|_{L^p(S_k)}
 \\\lesssim&_M \sum_{j=1}^\infty2^{(k-j)(s-r)}\sum_{l'\in\Z}2^{-M(\max(0,j-k)+|j-l'|}\big\|\sup_{q\in\N}2^{qs}|\phi_q\ast f|\big\|_{L^p(P_{l'})}
	\\
	&+\sum_{j=1}^\infty2^{(k-j)(s-r)}\sum_{l=-\infty}^j\sum_{l'\in\Z} 2^{-M(\max(0,j-k)+|j-l|+|j-l'|}\big\|\sup_{q\in\N}2^{qs}|\phi_q\ast f|\big\|_{L^p(P_{l'})}
	\\
	=&\sum_{l'\in\Z} \big\|\sup_{q\in\N}2^{qs}|\phi_q\ast f|\big\|_{L^p(P_{l'})}\sum_{j=1}^\infty2^{(k-j)(s-r)-M(\max(0,j-k)+|j-l'|)}\Big(1+\sum_{l=-\infty}^j2^{-M|j-l|}\Big)
	\\
	\lesssim&\sum_{l'\in\Z} \big\|\sup_{q\in\N}2^{qs}|\phi_q\ast f|\big\|_{L^p(P_{l'})}\sum_{j=-\infty}^\infty2^{(j-k)(r-s)-M(\max(0,j-k)+|j-l'|)}.
	\end{align*}
	Now by assumption $r-s>0$, we can take $M=2(r-s)+1$, thus $2^{(j-k)(r-s)-M\max(0,j-k)}\le 2^{-|j-k|(r-s)}$ and $2^{-M|j-l'|}\le 2^{-|j-l'|}$. Therefore
	\begin{equation*}
	2^{k(s-r)}\|T^{\eta,\theta,r}_\omega f\|_{L^p(S_k)}\lesssim \sum_{l'\in\Z}\big\|\sup_{q\in\N}2^{qs}|\phi_q\ast f|\big\|_{L^p(P_{l'})}\sum_{j\in\Z}2^{-|j-k|(r-s)-|j-l'|}.
	\end{equation*}
	
	Set $u[k]:=2^{-k(r-s)}\|T^{\eta,\theta,r}_\omega f\|_{L^p(S_k)}$ and $v[k]:=\|\sup\limits_{q\in\N}2^{qs}|\phi_q\ast f|\|_{L^p(P_{k})}$ for $k\in\Z$, and then the above calculation can be rewritten as convolutions in $\Z$:
	\begin{equation*}
	u[k]\lesssim_{\eta,\theta,\phi,M,R_0,r,s}\left((2^{-(r-s)|j|})_{j\in\Z}\ast(2^{-(r-s)|j|})_{j\in\Z}\ast v\right)[k],\quad\forall k\in\Z.
	\end{equation*}
	
	By Young's inequality, we get $\|u\|_{\ell^p(\Z)}\lesssim\|v\|_{\ell^p(\Z)}$ since $(2^{-(r-s)|j|})_{j\in\Z},(2^{-|j|})_{j\in\Z}\in\ell^1(\Z)$. In other words, we obtain \eqref{Eqn::Smooth::PfThm::Goal} and finish the proof.
\end{proof}

\section{The Results on Bounded Lipschitz Domains}\label{Section::SecCor}
We now apply our results for special Lipschitz domains to bounded Lipschitz domains. 
The following convention will be used frequently in this section. 
\begin{note}\label{Note::PartitionUnity}
Let $\Omega\subset\R^n$ be a bounded Lipschitz domain, and let $\Uc$ be an open set containing $\overline{\Omega}$.

We use the following objects, which can all be obtained by standard partition of unity argument:
\begin{itemize}
    % \item $M$ is a positive integer.
    \item $(U_\nu)_{\nu=1}^M$ are finitely many bounded open sets in $\Uc\subseteq\R^n$.
    \item$(\Phi_\nu:\R^n\to\R^n)_{\nu=1}^M$ as invertible affine linear transformations.
    \item $(\chi_\nu)_{\nu=0}^M$ are $C_c^\infty$-functions on $\R^n$ that take value in $[0,1]$.
    \item $(\omega_\nu)_{\nu=1}^M$ are special type domains on $\R^n$. That is $\omega_\nu=\{x_n>\rho_\nu(x')\}$ for some $\|\nabla\rho_\nu\|_{L^\infty}<1$.
\end{itemize}
They have the following properties: 
    \begin{enumerate}[label=(P.\alph*)]
        \item $\partial\Omega\subset\bigcup_{\nu=1}^MU_\nu \subset \subset \Uc$.
        \item\label{Item::PartitionUnity::chi} $\chi_0\in C_c^\infty(\Omega)$, $\chi_\nu\in C_c^\infty(U_\nu)$ for $1\le \nu\le M$, and $\chi_0+\sum_{\nu=1}^M\chi_\nu^2\equiv1$ in a neighborhood of $\overline\Omega$.
        \item\label{Item::PartitionUnity::Phi} For each $1\le \nu\le M $, $U_\nu=\Phi_\nu(\B^n)$ is the image of the unit ball under $\Phi_\nu$, and $U_\nu\cap\Omega=U_\nu\cap\Phi_\nu(\omega_\nu)$.
    \end{enumerate}
\end{note}

\begin{rem}\label{Rmk::SecCor::BddFact}
The condition \ref{Item::PartitionUnity::chi} and \ref{Item::PartitionUnity::Phi} ensures that for $\nu=1,\dots,M$, $[f\mapsto(\chi_\nu f)\circ\Phi_\nu]$ defines bounded linear maps $\Bs_{pq}^s(\Omega)\to\Bs_{pq}^s(\omega_\nu)$ for all $0<p,q\le\infty$, $s\in\R$ and $\Fs_{pq}^s(\Omega)\to\Fs_{pq}^s(\omega_\nu)$ for all $0<p<\infty$, $0<q\le\infty$, $s\in\R$. This can be done by passing to the extension $\tilde f$ of $f$ (which is defined in $\R^n$) and then using \cite[Theorems 2.8.2]{TriebelTheoryOfFunctionSpacesI} with \cite[Theorem~4.3.2]{TriebelTheoryOfFunctionSpacesII}. Cf. \cite[Proposition~3.2.3]{TriebelTheoryOfFunctionSpacesI}.

Similarly $[g\mapsto \chi_\nu(g\circ\Phi_\nu^{-1})]:\Bs_{pq}^s(\R^n)\to\Bs_{pq}^s(\R^n)$ ($\Fs_{pq}^s(\R^n)\to\Fs_{pq}^s(\R^n)$, respectively) are also bounded linear.
\end{rem} 

The following construction of extension operator for bounded domains is standard, and we provide the details here for readers' convenience. 
\begin{lemma}\label{Lem::SecCor::ConstructEc}
   Let $\Omega$ be a bounded Lipschitz domain and let $\Uc\supset\supset\Omega$ be an open subset. Let $(\chi_\nu)_{\nu=0}^M$, $(\omega_\nu)_{\nu=1}^M$ and $(\Phi_\nu)_{\nu=1}^M$ be as in Notation~\ref{Note::PartitionUnity}.

   Suppose $E_\nu$ is an extension operator for functions defined on $\omega_\nu$, $\nu=1,\dots,M$. Define $\Ec_\Omega$ by
   \begin{equation}\label{Eqn::SecCor::ConstructEc::DefEc}
       \Ec_\Omega f:=\chi_0f+\sum_{\nu=1}^M\chi_\nu (E_\nu[(\chi_\nu f)\circ\Phi_\nu]\circ\Phi_\nu^{-1}),
   \end{equation}
       where $f$ is a function on $\Omega$. Then $\Ec_\Omega$ is an extension operator such that $\Ec_\Omega f$ is supported in $\Uc$.

    Moreover if for every $\nu=1,\dots,M$, $E_\nu:\As_{pq}^s(\omega_\nu)\to\As_{pq}^s(\R^n)$ is bounded for some $\As\in\{\Bs,\Fs\}$, $s\in\R$ and admissible $(p,q)$ (recall it from Convention~\ref{Conv::Intro::AdmissibleIndex}), then $\Ec_\Omega:\As_{pq}^s(\Omega)\to\As_{pq}^s(\R^n)$ is also a bounded linear map.
\end{lemma}

\begin{proof}
In view of properties \ref{Item::PartitionUnity::chi} and \ref{Item::PartitionUnity::Phi}, we have for a function $f$ defined on $\Omega$: 
\begin{align*}
    \Ec_\Omega f|_\Omega
    &\textstyle=\chi_0f+\sum_{\nu=1}^M \chi_\nu (E_\nu[(\chi_\nu f)\circ\Phi_\nu]\circ\Phi_\nu^{-1})|_{U_{\nu} \cap\Omega}
   \\  
   &\textstyle=\chi_0f+\sum_{\nu=1}^M \chi_\nu (E_\nu[(\chi_\nu f)\circ\Phi_\nu]|_{\omega_\nu}\circ\Phi_\nu^{-1})
    \\
    &\textstyle=\chi_0f+\sum_{\nu=1}^M \chi_\nu ((\chi_\nu f)\circ\Phi_\nu\circ\Phi_\nu^{-1})=\chi_0f+\sum_{\nu=1}^M \chi_\nu^2f=f.
\end{align*}

Clearly, $\supp\Ec_\Omega f\subseteq\bigcup_{\nu=0}^M\supp\chi_\nu\subset\Omega\cup\bigcup_{\nu=1}^MU_\nu\subset\subset\Uc$. We see that $\Ec_\Omega$ is an extension operator supported in $\Uc$.

\medskip Assume that for some $s$ and some admissible $(p,q)$, we have $E_\nu:\As_{pq}^s(\omega_\nu)\to\As_{pq}^s(\R^n)$ for all $1\le\nu\le M$. 

By the assumption and the fact that $[f\mapsto (\chi_\nu f)\circ\Phi_\nu]:\As_{pq}^s(\Omega)\to\As_{pq}^s(\omega_\nu)$ is a bounded linear operator (see Remark~\ref{Rmk::SecCor::BddFact}), we have boundedness for the map $\big[f\mapsto E_\nu[(\chi_\nu f)\circ\Phi_\nu]\big]:\As_{pq}^s(\Omega)\to\As_{pq}^s(\R^n)$.

Since $[g\mapsto \chi_\nu( g\circ\Phi_\nu^{-1})]:\As_{pq}^s(\R^n)\to \As_{pq}^s(\R^n)$ is also bounded linear, taking  composition we get the boundedness of $\big[f\mapsto \chi_\nu (E_\nu[(\chi_\nu f)\circ\Phi_\nu]\circ\Phi_\nu^{-1})\big]:\As_{pq}^s(\Omega)\to\As_{pq}^s(\R^n)$. Summing over $1 \le \nu\le M$ and using the boundedness of $[f\mapsto \chi_0f]:\As_{pq}^s(\Omega)\to\As_{pq}^s(\R^n)$ we obtain the boundedness of $\Ec_\Omega$. 
\end{proof}

\subsection{Proof of Theorem~\ref{Thm::Intro::Folklore!}}\label{Section::PfFolkLore}

We start with the proof for special Lipschitz domain using Theorem~\ref{Thm::MainThm} \ref{Item::MainThm::Bdd} and \ref{Item::MainThm::AntiDev}.

First we formulate the analogy of $\partial^\alpha \circ S=S_\alpha\circ\partial^\alpha$ and \eqref{Eqn::Intro::SAlphaEqn} in the Lipschitz setting: for $\alpha\in\N^n\backslash\{0\}$ and for a special Lipschitz domain $\omega$, we can write $\partial^\alpha\circ E_\omega=\sum_\beta E^{\alpha,\beta}_\omega\circ\partial^\beta$ such that $E^{\alpha,\beta}_\omega$ has the similar expression to \eqref{Eqn::Intro::ExtOp}.

\begin{prop}\label{Prop::AntiDev::ForklorePrep}
Let $\omega\subset\R^n$ be a special Lipschitz domain, and let $E_\omega:\Ss'(\omega)\to\Ss'(\R^n)$ be the extension operator given by \eqref{Eqn::Intro::ExtOp}. Then, for any $\alpha\in\N^n\backslash\{0\}$, there are operators $E^{\alpha,0}_\omega,E^{\alpha,\beta}_\omega:\Ss'(\omega)\to\Ss'(\R^n)$ for $\beta\in\N^n$, $|\beta|=|\alpha|$ such that
\begin{enumerate}[(i)]
    \item\label{Item::AntiDev::ForklorePrep::Bdd} $E^{\alpha,0}_\omega, E^{\alpha,\beta}_\omega:\As_{pq}^s(\omega)\to\As_{pq}^s(\R^n)$ are bounded linear maps for $\As\in\{\Bs,\Fs\}$, $s\in\R$, and admissible $(p,q)$.
    \item\label{Item::AntiDev::ForklorePrep::Sum} For every $f\in\Ss'(\omega)$, \begin{equation}\label{Eqn::AntiDev::ForklorePrep}
        \partial^\alpha(E_\omega f)=E^{\alpha,0}_\omega f+\sum_{\beta:|\beta|=|\alpha|}E^{\alpha,\beta}_\omega(\partial^\beta f).
    \end{equation}
\end{enumerate}
\end{prop}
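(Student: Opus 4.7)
The plan is to decompose $E_\omega$ according to \eqref{Eqn::Intro::EfromT} as $E_\omega f = F^{\psi_0,\phi_0}_\omega f + T^{\psi,\phi,0}_\omega f$, differentiate each piece separately, and then recognize each derivative as a member of the $F$- or $T$-family so that Proposition \ref{Prop::Prem::BddE0} and Theorem \ref{Thm::MainThm} can be applied directly.

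For the zeroth term, differentiation passes through the convolution onto $\psi_0$:
\begin{equation*}
\partial^\alpha F^{\psi_0,\phi_0}_\omega f = (\partial^\alpha\psi_0)\ast(\1_\omega\cdot(\phi_0\ast f)) = F^{\partial^\alpha\psi_0,\phi_0}_\omega f.
\end{equation*}
Since $\partial^\alpha\psi_0\in\Ss(-\Kb)$, Proposition \ref{Prop::Prem::BddE0} \ref{Item::Prem::BddE0::F0} shows $F^{\partial^\alpha\psi_0,\phi_0}_\omega:\As_{pq}^s(\omega)\to\As_{pq}^{s+1}(\R^n)\hookrightarrow \As_{pq}^s(\R^n)$. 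So I would define $E^{\alpha,0}_\omega:=F^{\partial^\alpha\psi_0,\phi_0}_\omega$.

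For the main term, I first absorb $\partial^\alpha$ into each $\psi_j$ at the correct scale. Set
\begin{equation*}
\psi^\alpha_j(x):=2^{-(j-1)|\alpha|}(\partial^\alpha\psi_j)(x),\quad j\ge 1.
\end{equation*}
The scaling $\psi_j(x)=2^{(j-1)n}\psi_1(2^{j-1}x)$ gives $\psi^\alpha_j(x)=2^{(j-1)n}\psi^\alpha_1(2^{j-1}x)$, and $\supp\psi^\alpha_j\subseteq\supp\psi_j\subset-\Kb$, together with the infinite-order moment vanishing of $\psi^\alpha_1$, so $\psi^\alpha=(\psi^\alpha_j)_{j=1}^\infty\in\Gf(-\Kb)$. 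Then termwise differentiation yields
\begin{equation*}
\partial^\alpha T^{\psi,\phi,0}_\omega f = \sum_{j=1}^\infty 2^{(j-1)|\alpha|}\psi^\alpha_j\ast(\1_\omega\cdot(\phi_j\ast f)) = 2^{-|\alpha|}\,T^{\psi^\alpha,\phi,|\alpha|}_\omega f.
\end{equation*}

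Next I invoke Theorem \ref{Thm::MainThm} \ref{Item::MainThm::AntiDev} with $m=|\alpha|$ applied to $T^{\psi^\alpha,\phi,|\alpha|}_\omega$ (in the second form of the identity), producing families $\tilde\phi^\beta\in\Gf(-\Kb)$ for $|\beta|=|\alpha|$ such that
\begin{equation*}
T^{\psi^\alpha,\phi,|\alpha|}_\omega = \sum_{|\beta|=|\alpha|} T^{\psi^\alpha,\tilde\phi^\beta,0}_\omega\circ\partial^\beta.
\end{equation*}
Defining $E^{\alpha,\beta}_\omega:=2^{-|\alpha|}T^{\psi^\alpha,\tilde\phi^\beta,0}_\omega$ for $|\beta|=|\alpha|$ and combining with the zeroth-term identity yields \eqref{Eqn::AntiDev::ForklorePrep}, while the boundedness part of Theorem \ref{Thm::MainThm} \ref{Item::MainThm::Bdd} (with $r=0$) gives $E^{\alpha,\beta}_\omega:\As_{pq}^s(\omega)\to\As_{pq}^s(\R^n)$ for all admissible $(p,q)$ and $s\in\R$.

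There is no serious obstacle; the only point to be careful about is verifying that $\psi^\alpha\in\Gf(-\Kb)$ (correct scaling, support, and moment-vanishing) and that the summand $2^{(j-1)|\alpha|}\psi^\alpha_j\ast(\1_\omega\cdot(\phi_j\ast f))$ converges in $\Ss'(\R^n)$, which follows at once from Proposition \ref{Prop::Prem::TildeTBdd} \ref{Item::Prem::TildeTBdd::WellDef}. The mildly subtle bookkeeping is the factor $2^{-|\alpha|}$ coming from indexing $\psi^\alpha_j$ via $2^{j-1}$ rather than $2^j$, which is absorbed into the definition of $E^{\alpha,\beta}_\omega$.
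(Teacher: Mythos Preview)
Your proof is correct and follows essentially the same route as the paper: split $E_\omega=F^{\psi_0,\phi_0}_\omega+T^{\psi,\phi,0}_\omega$, absorb $\partial^\alpha$ into the $\psi$-family to land in the $T^{\cdot,\cdot,|\alpha|}$ class, and then apply Theorem \ref{Thm::MainThm} \ref{Item::MainThm::AntiDev} on the $\phi$-side. The only cosmetic difference is normalization: the paper sets $\psi^\alpha_j:=2^{-j|\alpha|}\partial^\alpha\psi_j$ so that $\partial^\alpha T^{\psi,\phi,0}_\omega=T^{\psi^\alpha,\phi,|\alpha|}_\omega$ without an extra constant, whereas your choice $\psi^\alpha_j:=2^{-(j-1)|\alpha|}\partial^\alpha\psi_j$ produces the harmless factor $2^{-|\alpha|}$ that you correctly fold into $E^{\alpha,\beta}_\omega$.
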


\begin{proof}By assumption $(\phi_j,\psi_j)_{j=0}^\infty$ is a $\Kb$-pair  (see Convention~\ref{Conv::Intro::KDyaPair}) that satisfies
 $\phi_0,\psi_0\in\Ss(\R^n)$ and $\phi=(\phi_j)_{j=1}^\infty,\psi=(\psi_j)_{j=1}^\infty\in\Gf(-\Kb)$. As before, we adopt the  notation $E_\omega=F^{\psi_0,\phi_0}_\omega  + T^{\psi,\phi,0}_\omega$, where 
$F^{\psi_0,\phi_0}_\omega  f := \psi_0 \ast (\1_\omega  \cdot (\phi_0\ast f)) $ and $T^{\psi, \phi,0}_\omega  f := \sum_{j=1}^\infty \psi_j \ast(\1_\omega\cdot(\phi_j\ast f))$.

Define $\psi^\alpha=(\psi^\alpha_j)_{j=1}^\infty$ as 
\begin{equation*}
    \psi^\alpha_j(x):=2^{-j|\alpha|}(\partial^\alpha\psi_j)(x)
    = 2^{-|\alpha|} 2^{(j-1)n}(\partial^\alpha\psi_1)(2^{j-1}x),\quad j\in\Z_+.
\end{equation*}
Clearly $\psi^\alpha\in\Gf(-\Kb)$, and we have for $f\in\Ss'(\omega)$, 
\begin{align*}
    \partial^\alpha E_\omega f=&\partial^\alpha\psi_0\ast(\1_\omega  \cdot (\phi_0\ast f))+\sum_{j=1}^\infty(\partial^\alpha\psi_j)\ast(\1_\omega  \cdot (\phi_j\ast f))
    \\
    =&\partial^\alpha\psi_0\ast(\1_\omega  \cdot (\phi_0\ast f))+\sum_{j=1}^\infty2^{j|\alpha|}\psi^\alpha_j\ast(\1_\omega  \cdot (\phi_j\ast f))=F^{\partial^\alpha\psi_0,\phi_0}_\omega f+T^{\psi^\alpha,\phi,|\alpha|}_\omega f.
\end{align*}

Applying Theorem~\ref{Thm::MainThm} \ref{Item::MainThm::AntiDev} with $\theta=\phi$ we can find some\footnote{In the proof of Theorem~\ref{Thm::MainThm} \ref{Item::MainThm::AntiDev}, see \eqref{Eqn::AntiDev::PfAtd::DefTildeEta}, the construction of $\tilde\eta^\beta=\tilde\phi^\beta$ does not depend on the $\eta=\psi^\alpha$, which is why we do not write it as ``$\tilde\phi^{\alpha,\beta}$''.} $\tilde\phi^\beta=(\tilde\phi^\beta_j)_{j=1}^\infty\in\Gf(-\Kb)$, where $\beta\in\N^n$, $|\beta|=|\alpha|$, such that $T^{\psi^\alpha,\phi,|\alpha|}_\omega=\sum_{|\beta|=|\alpha|}T^{\psi^\alpha,\tilde\phi^\beta,0}_\omega\circ\partial^\beta$. Therefore we have
\begin{equation}\label{Eqn::AntiDev::ForklorePrep::ProofTmp}
    \partial^\alpha E_\omega =F^{\partial^\alpha\psi_0,\phi_0}_\omega +\sum_{\beta:|\beta|=|\alpha|}T^{\psi^\alpha,\tilde\phi^\beta,0}_\omega\circ\partial^\beta .
\end{equation}

By Proposition~\ref{Prop::Prem::BddE0} \ref{Item::Prem::BddE0::F0} $F^{\partial^\alpha\psi_0,\phi_0}_\omega:\Ss'(\omega)\to\Ss'(\R^n)$ is well-defined and $F^{\partial^\alpha\psi_0,\phi_0}_\omega:\As_{pq}^s(\omega)\to\As_{pq}^{s+m}(\R^n)$ is bounded for all $s\in\R$, $m>0$ and admissible $(p,q)$. Similarly by Theorem~\ref{Thm::MainThm} \ref{Item::MainThm::Bdd}, for each $\beta$, $T^{\psi^\alpha,\tilde\phi^\beta,0}_\omega:\Ss'(\omega)\to\Ss'(\R^n)$ is well-defined and $T^{\psi^\alpha,\tilde\phi^\beta,0}_\omega:\As_{pq}^s(\omega)\to\As_{pq}^{s}(\R^n)$ is bounded for all $s\in\R$ and admissible $(p,q)$. 

Therefore by \eqref{Eqn::AntiDev::ForklorePrep::ProofTmp} and the boundedness of $F^{\partial^\alpha\psi_0,\phi_0}_\omega $ and $T^{\psi^\alpha,\tilde\phi^\beta,0}_\omega$, we complete the proof by taking
\begin{equation*}
    E^{\alpha,0}_\omega :=F^{\partial^\alpha\psi_0,\phi_0}_\omega,\quad E^{\alpha,\beta}_\omega:=T^{\psi^\alpha,\tilde\phi^\beta,0}_\omega,\quad\text{for }\alpha,\beta\in\N^n\backslash\{0\},\quad |\beta|=|\alpha|.\qedhere
\end{equation*}
\end{proof}
\begin{rem}\label{Rmk::AntiDev::ForklorePrep}
\begin{enumerate}[(a)]
    \item\label{Item::AntiDev::ForklorePrep::Constant} In view of \eqref{Eqn::Prem::BddE0} and \eqref{Eqn::MainThm::DefT}, we observe that both $E^{\alpha,0}_\omega$ and $E^{\alpha,\beta}_\omega $ take the form of double convolutions as in $E_\omega$. 

    Therefore by Remarks \ref{Rmk::Prem::TildeBdd::OmegaNotDepExplain} and \ref{Rmk::Prem::TildeF::OmegaNotDep}, and by taking restrictions to domains, we see that the operator norms $\|E^{\alpha,0}_\omega\|_{\As_{pq}^s(\omega)\to\As_{pq}^s(\R^n)}$ and $\|E^{\alpha,\beta}_\omega\|_{\As_{pq}^s(\omega)\to\As_{pq}^s(\R^n)}$ depend only on $p,q,s,\alpha$, the choice of $\Kb$-pair $(\phi,\psi)$, and a fixed version of Besov and Triebel-Lizorkin norm $\As_{pq}^s(\lambda)$ in Definition~\ref{Defn::Prem::BsFsDef}, but not on the special domain $\omega$. 
        \item For each $\alpha\neq0$, %in the proof we define a collection  $(E^{\alpha,\beta}_\omega)_\beta$ such that $\partial^\alpha\circ E_\omega=\sum_\beta E^{\alpha,\beta}_\omega\circ \partial^\beta$. 
    unlike in the case of the half-plane extension where we have $\partial^\alpha\circ S=S_\alpha\circ\partial^\alpha$, we do not know whether it is possible to construct a single operator $E^\alpha_\omega$ such that $E^\alpha_\omega:\As_{pq}^s(\omega)\to\As_{pq}^s(\R^n)$ and $\partial^\alpha\circ E_\omega=E^\alpha_\omega\circ\partial^\alpha$. Nevertheless, the construction $\partial^\alpha\circ E_\omega=\sum_\beta E^{\alpha,\beta}_\omega\circ \partial^\beta$ with  $E^{\alpha,\beta}_\omega:\As_{pq}^s(\omega)\to\As_{pq}^s(\R^n)$ is sufficient for proving Theorem~\ref{Thm::Intro::Folklore!}.
\end{enumerate}
\end{rem}

We now prove Theorem~\ref{Thm::Intro::Folklore!}, first for special Lipschitz domains and then for general bounded Lipschitz domains. We state the following results for a slightly large class than special Lipschitz functions. 
\begin{prop}\label{Prop::SecCor::SpeLipFolklore}
Let $\omega\subset\R^n$ be an open set with the form $\omega=\{(x',x_n):x_n>\rho(x')\}$, where $\rho:\R^{n-1}\to\R$ is a Lipschitz function of bounded gradient. Then the corresponding thesis in Theorem~\ref{Thm::Intro::Folklore!} is true for $\Omega=\omega$. Namely, for $\As\in\{\Bs,\Fs\}$, for every $s\in\R$, $m\in\Z$ and every admissible $(p,q)$, there is a $C>0$ such that
\begin{gather}\label{Eqn::SecCor::SpeLipFolklore}
    C^{-1}\|f\|_{\As_{pq}^s(\omega)}\le\sum_{|\alpha|\le m}\|\partial^\alpha f\|_{\As_{pq}^{s-m}(\omega)}\le C\|f\|_{\As_{pq}^s(\omega)}.
    %\label{Eqn::SecCor::SpeLipFolklore::Fs}
    %C^{-1}\|f\|_{\Fs_{pq}^s(\omega)}\le\sum_{|\alpha|\le m}\|\partial^\alpha f\|_{\Fs_{pq}^{s-m}(\omega)}\le C\|f\|_{\Fs_{pq}^s(\omega)}.
\end{gather}
\end{prop}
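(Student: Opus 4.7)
My plan is to reduce to the case $\|\nabla\rho\|_{L^\infty}<1$ (so that $\omega$ becomes a special Lipschitz domain in the sense of Definition \ref{Defn::Intro::SpeLipDom}), and then combine Proposition \ref{Prop::AntiDev::ForklorePrep} with the classical $\R^n$ equivalent norm
\begin{equation*}
\|g\|_{\As_{pq}^s(\R^n)} \approx \sum_{|\alpha|\le m}\|\partial^\alpha g\|_{\As_{pq}^{s-m}(\R^n)},
\end{equation*}
which is a standard Littlewood-Paley result (see e.g.\ \cite[Theorem 2.3.8]{TriebelTheoryOfFunctionSpacesI}, iterated $m$ times). For the reduction I apply the linear dilation $(x',x_n)\mapsto(x',x_n/\lambda)$ with $\lambda>\|\nabla\rho\|_{L^\infty}$, which sends $\omega$ to $\{y_n>\rho(y')/\lambda\}$ and preserves the (quasi-)norms $\|\cdot\|_{\As_{pq}^s}$ up to constants while mapping each $\partial^\alpha$ to a constant multiple of itself in the new coordinates.

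The $(\le)$ direction of \eqref{Eqn::SecCor::SpeLipFolklore} will follow at once by extending $f$ almost-optimally to $\tilde f\in\As_{pq}^s(\R^n)$: since $\partial^\alpha\tilde f$ extends $\partial^\alpha f$ and $\partial^\alpha:\As_{pq}^s(\R^n)\to\As_{pq}^{s-m}(\R^n)$ is bounded for $|\alpha|\le m$, taking the infimum over $\tilde f$ gives the estimate. For the $(\ge)$ direction I use Rychkov's operator $E_\omega$ from \eqref{Eqn::Intro::ExtOp}:
\begin{equation*}
\|f\|_{\As_{pq}^s(\omega)}\le\|E_\omega f\|_{\As_{pq}^s(\R^n)}\approx\sum_{|\alpha|\le m}\|\partial^\alpha E_\omega f\|_{\As_{pq}^{s-m}(\R^n)}.
\end{equation*}
The $\alpha=0$ term is bounded by $\|f\|_{\As_{pq}^{s-m}(\omega)}$ via boundedness of $E_\omega$. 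For each $1\le|\alpha|\le m$, Proposition \ref{Prop::AntiDev::ForklorePrep} yields
\begin{equation*}
\partial^\alpha E_\omega f=E^{\alpha,0}_\omega f+\sum_{|\beta|=|\alpha|}E^{\alpha,\beta}_\omega(\partial^\beta f),
\end{equation*}
and then the boundedness in Proposition \ref{Prop::AntiDev::ForklorePrep}\ref{Item::AntiDev::ForklorePrep::Bdd} lets me bound each summand by $\|f\|_{\As_{pq}^{s-m}(\omega)}+\sum_{|\beta|=|\alpha|}\|\partial^\beta f\|_{\As_{pq}^{s-m}(\omega)}$. Summing over $|\alpha|\le m$ gives $\|f\|_{\As_{pq}^s(\omega)}\lesssim\sum_{|\beta|\le m}\|\partial^\beta f\|_{\As_{pq}^{s-m}(\omega)}$.

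The only real (and minor) obstacle is verifying that the two ingredients above hold throughout the full admissible range in Convention \ref{Conv::Intro::AdmissibleIndex} — in particular the quasi-Banach regime $0<p,q<1$ and negative $s$: both the $\R^n$ equivalent norm and the dilation invariance of $\As_{pq}^s$ are standard in Triebel's theory but deserve explicit citations (e.g.\ \cite[Section 4.3.2]{TriebelTheoryOfFunctionSpacesII}). Everything else is a mechanical assembly of Theorem \ref{Thm::MainThm} and Proposition \ref{Prop::AntiDev::ForklorePrep}.
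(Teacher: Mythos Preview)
Your proposal is correct and follows essentially the same route as the paper's own proof: reduce to $\|\nabla\rho\|_{L^\infty}<1$ by an anisotropic dilation in the $x_n$-direction, get the easy inequality by extending $f$ and using $\partial^\alpha:\As_{pq}^s(\R^n)\to\As_{pq}^{s-m}(\R^n)$, and get the hard inequality by applying the $\R^n$ equivalent norm (\cite[Theorem 2.3.8]{TriebelTheoryOfFunctionSpacesI}) to $E_\omega f$ and then invoking Proposition~\ref{Prop::AntiDev::ForklorePrep} to commute each $\partial^\alpha$ through $E_\omega$. The only cosmetic difference is that the paper treats the $\alpha=0$ term together with the others rather than singling it out, and it does not pause over the quasi-Banach range since \cite[Theorem 2.3.8]{TriebelTheoryOfFunctionSpacesI} already covers all admissible $(p,q)$.
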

\begin{rem}\label{Rmk::SecCor::SpeLipFolkloreConstant}
The constant $C$ in \eqref{Eqn::SecCor::SpeLipFolklore} does not depend on the choice of the special Lipschitz domain $\omega$: in the proof below we only use the boundedness of $E_\omega$ and $E_\omega^{\alpha,\beta}$, whose operator norms do not depend on $\omega$ due to Remark~\ref{Rmk::AntiDev::ForklorePrep} \ref{Item::AntiDev::ForklorePrep::Constant}. 
\end{rem}

\begin{proof}
By taking a scaling $(x',x_n)\mapsto (x',cx_n)$ where $0<c<\|\nabla\rho\|_{L^\infty}^{-1}$, we can assume $\|\nabla\rho\|_{L^\infty}<1$. In other words, we can assume $\omega$ to be a special Lipschitz domain without loss of generality. 

If $\tilde f\in\As_{pq}^s(\R^n)$ is an extension of $f\in\As_{pq}^s(\omega)$, then $\partial^\alpha\tilde f$ is also an extension of $\partial^\alpha f$, so $\|\partial^\alpha f\|_{\As_{pq}^{s-m}(\omega)}\le\|\partial^\alpha\tilde f\|_{\As_{pq}^{s-m}(\R^n)}$. By \cite[Theorem~2.3.8]{TriebelTheoryOfFunctionSpacesI} for $|\alpha|\le m$ we have $\|\partial^\alpha\tilde f\|_{\As_{pq}^{s-m}(\R^n)}\lesssim_{p,q,s,m}\|\tilde f\|_{\As_{pq}^s(\R^n)}$. Taking infimum of $\tilde f$ over $f$ we get $\sum\limits_{|\alpha|\le m}\|\partial^\alpha f\|_{\As_{pq}^{s-m}(\omega)}\lesssim_{p,q,s,m}\|f\|_{\As_{pq}^s(\omega)}$, which is the inequality on the right hand side of \eqref{Eqn::SecCor::SpeLipFolklore}.  

For the inequality of the left hand side, let $E_\omega:\As_{pq}^{s}(\omega)\to\As_{pq}^s(\R^n)$ be the Rychkov extension operator (\ref{Eqn::Intro::ExtOp}). By Proposition~\ref{Prop::AntiDev::ForklorePrep} we can find operators $E^{\alpha,0}_\omega, E^{\alpha,\beta}_\omega:\As_{pq}^s(\omega)\to\As_{pq}^{s}(\R^n)$ for $\alpha\in\N^n\backslash\{0\}$ and $|\beta|=|\alpha|$ such that $\partial^\alpha\circ E_\omega=E^{\alpha,0}_\omega+\sum_{\beta:|\beta|=|\alpha|}E^{\alpha,\beta}_\omega\circ\partial^\beta $. 

By \cite[Theorem~2.3.8]{TriebelTheoryOfFunctionSpacesI} we have $\|E_\omega f\|_{\As_{pq}^s(\R^n)}\approx\sum_{|\alpha|\le m}\|\partial^\alpha E_\omega f\|_{\As_{pq}^{s-m}(\R^n)}$. Applying Proposition~\ref{Prop::AntiDev::ForklorePrep} we get  
\begin{align*}
    \|f\|_{\As_{pq}^s(\omega)} \approx &_{p,q,s}\|E_\omega f\|_{\As_{pq}^s(\R^n)}\approx_{p,q,s,m}\sum_{|\alpha|\le m}\|\partial^\alpha E_\omega f\|_{\As_{pq}^{s-m}(\R^n)} 
    \\\le&\sum_{|\alpha|\le m}\Big(\|E^{\alpha,0}_\omega f\|_{\As_{pq}^{s-m}(\R^n)}+\sum_{\beta:|\beta|=|\alpha|}\|E^{\alpha,\beta}_\omega\partial^\beta f\|_{\As_{pq}^{s-m}(\R^n)}\Big)
    \\
    \lesssim&_{p,q,s,m,\omega}\|f\|_{\As_{pq}^{s-m}(\omega)}+\sum_{|\beta|\le m}\|\partial^\beta f\|_{\As_{pq}^{s-m}(\omega)}\approx\sum_{|\alpha|\le m}\|\partial^\alpha f\|_{\As_{pq}^{s-m}(\omega)}.
\end{align*}
This completes the proof.
\end{proof}

By combining Lemma~\ref{Lem::SecCor::ConstructEc} and Proposition~\ref{Prop::SecCor::SpeLipFolklore}, we can prove Theorem~\ref{Thm::Intro::Folklore!}.

\begin{proof}[Proof of Theorem~\ref{Thm::Intro::Folklore!}]
 Let $\Omega$ be a bounded Lipschitz domain, we show that
 \[ 
    C^{-1}\|f\|_{\As_{pq}^s(\Omega)}\le \sum_{|\alpha|\le m}\|\partial^\alpha f\|_{\As_{pq}^{s-m}(\Omega)}\le C\|f\|_{\As_{pq}^s(\Omega)}, 
 \]  
 for $\As\in\{\Bs,\Fs\}$, $s\in\R$, $m\in\Z_+$ and admissible $(p,q)$.

Again, the inequality $\sum_{|\alpha|\le m}\|\partial^\alpha f\|_{\As_{pq}^{s-m}(\Omega)}\lesssim\|f\|_{\As_{pq}^s(\Omega)}$ is easy. Let $\tilde{f} $ be an extension of $f$. Then $\partial^\alpha \tilde f $ is an extension of  $\partial^\alpha f$. By \cite[Theorem~2.3.8]{TriebelTheoryOfFunctionSpacesI} $\|\partial^\alpha\tilde f\|_{\As_{pq}^{s-m}(\R^n)}\lesssim_{p,q,s,m}\|\tilde f\|_{\As_{pq}^s(\R^n)}$, taking infimum on both sides over all such $\tilde f$ gives the inequality.

To prove $\|f\|_{\As_{pq}^s(\Omega)}\lesssim\sum_{|\alpha|\le m}\|\partial^\alpha f\|_{\As_{pq}^{s-m}(\Omega)}$, we apply Proposition~\ref{Prop::SecCor::SpeLipFolklore} along with the partition of unity argument.

Using Notation~\ref{Note::PartitionUnity} and Remark~\ref{Rmk::SecCor::BddFact} we have
\begin{equation}\label{Eqn::Seccor::ProofFolklore::Est0}
    \begin{aligned}
     \| f \|_{\As_{pq}^s(\Omega)} 
    &=\textstyle\big\|\chi_0f+\sum_{\nu=1}^M\chi_\nu^2f\big\|_{\As_{pq}^s(\Omega)}\\
    &\lesssim_{p,q} \textstyle\|\chi_0f\|_{\As_{pq}^s(\R^n)}+\sum_{\nu=1}^M\|\chi_\nu^2f\|_{\As_{pq}^s(\Phi_\nu(\omega_\nu))}
    \\ &\textstyle\approx\|\chi_0f\|_{\As_{pq}^s(\R^n)}+\sum_{\nu=1}^M\|(\chi_\nu^2f)\circ\Phi_\nu\|_{\As_{pq}^s(\omega_\nu)}.
\end{aligned}
\end{equation}

By \cite[Theorem~2.3.8]{TriebelTheoryOfFunctionSpacesI} we have
\begin{equation*}
    \textstyle\|\chi_0f\|_{\As_{pq}^s(\R^n)}\lesssim\sum_{|\alpha|\le m}\|\partial^\alpha(\chi_0f)\|_{\As_{pq}^{s-m}(\R^n)}
\end{equation*}

Since $\supp\chi_0\subset\Omega$, for each $\alpha$ we have
\begin{equation}\label{Eqn::SecCor::ProofFolklore::EstChi0}
    \textstyle\|\partial^\alpha(\chi_0f)\|_{\As_{pq}^{s-m}(\R^n)}\approx\|\partial^\alpha(\chi_0f)\|_{\As_{pq}^{s-m}(\Omega)}\lesssim\sum_{\beta+\gamma=\alpha}\|\partial^\beta\chi_0\cdot\partial^\gamma f\|_{\As_{pq}^{s-m}(\Omega)}\lesssim\sum_{\gamma\le\alpha}\|\partial^\gamma f\|_{\As_{pq}^{s-m}(\Omega)}.
\end{equation}
Taking sum over $\alpha$ we get $\|\chi_0f\|_{\As_{pq}^s(\R^n)}\lesssim\sum_{|\alpha|\le m}\|\partial^{\alpha} f\|_{\As_{pq}^{s-m}(\Omega)}$. 

For the terms $(\chi_\nu^2f)\circ\Phi_\nu$ in \eqref{Eqn::Seccor::ProofFolklore::Est0}, by Proposition~\ref{Prop::SecCor::SpeLipFolklore} and the fact that $\Phi_\nu$ are linear maps,
\begin{align*}
    \|(\chi_\nu^2f)\circ\Phi_\nu\|_{\As_{pq}^s(\omega_\nu)}\lesssim\sum_{|\alpha|\le m}\|\partial^\alpha((\chi_\nu^2f)\circ\Phi_\nu)\|_{\As_{pq}^{s-m}(\omega_\nu)}\approx\sum_{|\alpha|\le m}\|(\partial^\alpha(\chi_\nu^2f))\circ\Phi_\nu\|_{\As_{pq}^{s-m}(\omega_\nu)}.
\end{align*}

By \ref{Item::PartitionUnity::chi} and \ref{Item::PartitionUnity::Phi}, $(\partial^\alpha(\chi_\nu^2f))\circ\Phi_\nu$ is supported in $\B^n\cap\overline{\omega_\nu}=\Phi_\nu^{-1}(U_\nu\cap\overline{\Omega})$. Similar to \eqref{Eqn::SecCor::ProofFolklore::EstChi0} we have
\begin{align*}
    &\|(\partial^\alpha(\chi_\nu^2f))\circ\Phi_\nu\|_{\As_{pq}^{s-m}(\omega_\nu)}
    \approx\|(\partial^\alpha(\chi_\nu^2f))\circ\Phi_\nu\|_{\As_{pq}^{s-m}(\B^n\cap\omega_\nu)}
    \\
    \approx&\textstyle\|\partial^\alpha(\chi_\nu^2f)\|_{\As_{pq}^{s-m}(U_\nu\cap\Omega)}
    \lesssim\sum_{\beta+\gamma = \alpha}\|\partial^\beta(\chi_\nu^2)\partial^\gamma f\|_{\As_{pq}^{s-m}(U_\nu\cap\Omega)}
    \lesssim\sum_{\gamma\le\alpha}\|\partial^\gamma f\|_{\As_{pq}^{s-m}(U_\nu\cap\Omega)}
    \\
    \le&\textstyle\sum_{\gamma\le\alpha}\|\partial^\gamma f\|_{\As_{pq}^{s-m}(\Omega)}\le\sum_{|\gamma|\le m}\|\partial^\gamma f\|_{\As_{pq}^{s-m}(\Omega)}.
\end{align*}

Taking sum over $|\alpha|\le m$ and $1\le \nu\le M$ we get $\sum_{\nu=1}^M\|(\chi_\nu^2 f)\circ\Phi_\nu\|_{\As_{pq}^s(\omega_\nu)}\lesssim\sum_{|\gamma|\le m}\|\partial^\gamma f\|_{\As_{pq}^{s-m}(\Omega)}$. In view of the estimates and \eqref{Eqn::Seccor::ProofFolklore::Est0}, the proof is now complete. 
\end{proof}

\begin{rem}\label{Rmk::SecCor::RmkFolklore}
	Alternatively, one can apply partition of unity argument on Proposition~\ref{Prop::AntiDev::ForklorePrep} to construct operators $\Ec_\Omega^{\alpha,\beta}$ for $\alpha,\beta\in\N^n$ such that 
	\begin{itemize}
		\item $\Ec_\Omega^{\alpha,\beta}:\As_{pq}^s(\Omega)\to\As_{pq}^s(\R^n)$ are all bounded linear.
		\item $\partial^\alpha\circ\Ec_\Omega=\sum_{\beta:|\beta|\le|\alpha|}\Ec_\Omega^{\alpha,\beta}\circ\partial^\beta$.
	\end{itemize}
	Theorem~\ref{Thm::Intro::Folklore!} then follows immediately. We leave the details of the construction to reader.
\end{rem} 
\begin{rem}\label{Rmk::SecCor::FolkloreConstant}
    In the proof above we see that the constants $C$ in \eqref{Eqn::Intro::Folklore!::Bs} and \eqref{Eqn::Intro::Folklore!::Fs} are (upper) stable under small Lipschitz perturbation of the domain. More precisely, the upper stability of $C$ can be stated as follows: 

    \smallskip
    \textit{Let $\Omega$ be a bounded Lipschitz domain. Let $\sigma:\R^n\to\R$ be a Lipschitz function\footnote{When $\Omega$ and $\sigma$ are both $C^1$, such $\sigma$ is called a \textit{defining function} of $\Omega$. For bounded Lipschitz domain in general, our definition of $\sigma$ is different from the \textit{Lipschitz defining function} appeared in \cite[Chapter 0]{ShawBVPLipschitz}.} that satisfies the following:}
    \begin{itemize}
        \item \textit{$\Omega=\{x:\sigma (x)<0\}$ and $\overline\Omega^c=\{x:\sigma(x)>0\}$.}
        \item \textit{There exist an $\epsilon_0>0$ and a continuous vector field $X$ defined on $\{x:-\epsilon_0<\sigma(x)<\epsilon_0\}$ such that $\nabla\sigma(x)\cdot X(x)>\epsilon_0$ for almost every $x$ such that $|\sigma(x)|<\epsilon_0$.} 
    \end{itemize}
    
    \textit{Then there exists a $\delta_0=\delta_0(\Omega,\sigma,\epsilon_0)>0$ such that, if $\sigma':\R^n\to\R$ satisfies $\|\sigma'-\sigma\|_{C^{0,1}(\R^n)}<\delta_0$, then the new domain $\Omega' =\{\sigma'<0\}$ is still bounded Lipschitz. Thus Theorem~\ref{Thm::Intro::Folklore!} holds with $\Omega$ replaced by $\Omega'$.} 
    
    \textit{Moreover, for every $p,q,s,m$ ($p<\infty$ for $\Fs$-cases) there is a $C_*=C_*(\Omega,\delta_0,p,q,s,m)>0$ (depending also on the choice of $\As_{pq}^s$-norm in Definition~\ref{Defn::Prem::BsFsDef}), such that for all such $\Omega'$ and $f\in\As_{pq}^s(\Omega')$, we have $C_*^{-1}\|f\|_{\As_{pq}^s(\Omega')}\le \sum_{|\alpha|\le m}\|\partial^\alpha f\|_{\As_{pq}^{s-m}(\Omega')}\le C_*\|f\|_{\As_{pq}^s(\Omega')}$.}
    
    \smallskip
    We sketch the proof here and leave the details to the reader. Let $(U_\nu,\Phi_\nu,\chi_\nu,\omega_\nu)$ be the objects of partition of unity for $\Omega$ given in Notation~\ref{Note::PartitionUnity}. When $\delta_0$ is small enough, $\Omega'$ is still a Lipschitz domain, and there exist special Lipschitz domains $\omega'_\nu$ ($1\le \nu\le M$) such that $U_\nu\cap \Omega'=U_\nu\cap\Phi_\nu(\omega'_\nu)$. Using Remark~\ref{Rmk::SecCor::SpeLipFolkloreConstant} and keeping track of the constants in the proof of Theorem~\ref{Thm::Intro::Folklore!} above, we see that $C_*$ depends only on $\Omega,\chi_\nu,\Phi_\nu$ and the norm definitions of spaces $\As_{pq}^s(\R^n)$, $\As_{pq}^{s-m}(\R^n)$, but not on $\Omega'$ itself.
\end{rem}
\subsection{Proof of Theorem~\ref{Prop::Intro::SmoothBddDom}}\label{Section::SecCor::PfSmoothBddDom}

First we prove the analog of Theorem~\ref{Prop::Intro::SmoothBddDom} on special Lipschitz domains using Theorem~\ref{Thm::MainThm} \ref{Item::MainThm::Smoothing}.

\begin{prop}[Quantitive smoothing estimate for $E_\omega$] \label{Thm::Intro::SmoothSpeDom}
  Let $\omega\subset\R^n$ be a special Lipschitz domain. Let $E_\omega$ be given by \eqref{Eqn::Intro::ExtOp} or \eqref{Eqn::Intro::EfromT}. Then we have the following estimate:  \begin{equation}\label{Eqn::MainThm::SmooBddEx}
  	\big[f \mapsto (E_\omega f)|_{\overline\omega^c}\big]:\Fs_{p\infty}^{s}(\omega)\to W^{m,p}(\overline\omega^c,\dist_\omega^{m-s}),\quad 1\le p\le\infty,\quad m\in\N,\quad s<m.
  	\end{equation} 
  	In particular, $E_\omega f \in C^\infty_\loc(\overline{\omega}^c)$ if $f\in\Fs_{p\infty}^s(\omega)$ for some $s\in\R$ and $1\le p\le\infty$.
\end{prop}  
\begin{proof}
We can write $E_\omega f=F^{\psi_0,\phi_0}_\omega f+T^{\psi,\phi,0}_\omega f$ from \eqref{Eqn::Intro::EfromT}. Here
$F^{\psi_0,\phi_0}_\omega  f := \psi_0 \ast (\1_\omega  \cdot (\phi_0\ast f)) $, $T^{\psi, \phi,0}_\omega  f := \sum_{j=1}^\infty \psi_j \ast(\1_\omega\cdot(\phi_j\ast f))$, $(\phi_j,\psi_j)_{j=0}^\infty$ is a $\Kb$-pair and $\phi=(\phi_j)_{j=1}^\infty,\psi=(\psi_j)_{j=1}^\infty\in\Gf$.
	
	By Theorem~\ref{Thm::MainThm} \ref{Item::MainThm::Smoothing} we have $T^{\psi,\phi,0}_\omega :\Fs^{s}_{p\infty}(\omega)\to W^{m,p}(\overline\omega^c,\dist_\omega^{m-s})$. 
	
	By Proposition~\ref{Prop::Prem::BddE0} \ref{Item::Prem::BddE0::F0} we have $F^{\psi_0,\phi_0}_\omega:\Fs^{s}_{p\infty}(\omega)\to\Fs_{p\infty}^{m+1}\subset W^{m,p}(\R^n)$ is bounded. In particular it is bounded as $\Fs^{s}_{p\infty}(\omega)\to W^{m,p}(\overline\omega^c,\dist_\omega^{m-s})$. This gives the estimate for $E_\omega:\Fs^{s}_{p\infty}(\omega)\to W^{m,p}(\overline\omega^c,\dist_\omega^{m-s})$.
	
    Since $W^{m,p}(\overline\omega^c,\dist_\omega^{m-s})\subset W^{m,p}_\loc(\overline{\omega}^c)$ and $C^\infty_\loc(\overline{\omega}^c)=\bigcap_{m=1}^\infty W^{m,p}_\loc(\overline{\omega}^c)$ for every $1\le p\le\infty$. Let $m\to+\infty$ we see that $E_\omega f\in C^\infty_\loc(\overline{\omega}^c)$.
\end{proof}

With the setup of Lemma~\ref{Lem::SecCor::ConstructEc} we can now state and prove the blow-up estimate for Rychkov's extension operator on bounded Lipschitz domain.

\begin{proof}[Proof of Theorem~\ref{Prop::Intro::SmoothBddDom}]
Let $\Ec$ be as in \eqref{Eqn::SecCor::ConstructEc::DefEc} with $E_\nu=E_{\omega_\nu}$ be the extension operators given by \eqref{Eqn::Intro::ExtOp}. By Proposition~\ref{Thm::Intro::SmoothSpeDom} we have $E_\nu:\Fs_{p\infty}^{s}(\omega_\nu)\to W^{m,p}(\overline{\omega_\nu}^c,\dist_{\omega_\nu}^{m-s})$ for $1\le p\le\infty$, $m\in\N$ and $s<m$. By \ref{Item::PartitionUnity::Phi} we can find a $C>0$ such that
\begin{equation*}
    C^{-1}\dist_{\Omega}(\Phi_\nu(x))\le\dist_{\omega_\nu}(x)\le C\dist_{\Omega}(\Phi_\nu(x)),\quad 1\le\nu\le M,\quad x\in\B^n(=\Phi_\nu^{-1}(U_\nu)).
\end{equation*}
Therefore by Remark~\ref{Rmk::SecCor::BddFact}, for each $1 \leq \nu \leq M$, 
\begin{align*}
    \| \chi_{\nu} E_\nu[(\chi_\nu f)\circ\Phi_\nu]\circ\Phi_\nu^{-1}\|_{W^{m,p}(\overline\Omega^c ,\dist_\Omega^{m-s})}
    &\approx\|E_\nu[(\chi_\nu f)\circ\Phi_\nu]\|_{W^{m,p}(\overline{\omega_\nu}^c \cap \B^n ,\dist_{\omega_\nu}^{m-s})}
    \\ &\lesssim\|(\chi_\nu f)\circ\Phi_\nu\|_{\Fs_{p\infty}^s(\omega_\nu)}\lesssim\|f\|_{\Fs_{p\infty}^s(\Omega)}.
\end{align*} 
Here we recall that when $p=\infty$ we have $\Fs_{\infty\infty}^s=\Bs_{\infty\infty}^s$, the boundedness result is still valid. Since $\Ec_\Omega = \sum_{\nu=1}^M  \chi_{\nu} E_\nu[(\chi_\nu f)\circ\Phi_\nu]\circ\Phi_\nu^{-1}$, we get $\Ec_\Omega: \Fs_{p\infty}^{s}(\Omega)\to W^{m,p}(\overline\Omega^c,\dist_{\Omega}^{m-s})$ for $m > s$.

\medskip
Let $f\in\Ss'(\Omega)$, since $\Omega$ is a bounded domain, $f$ can be written as the sum of derivatives of bounded continuous in $\Omega$. Therefore $f\in\Bs_{\infty\infty}^{-M}(\Omega)$ for some $M>0$. Take $s=-M$ and $p=\infty$ in \eqref{Eqn::SecCor::SmoothBddDom::Bdd}, we have $\Ec_\Omega f\in W^{m,\infty}_\loc(\overline{\Omega}^c)$ for all $m\in\Z_+$, thus $\Ec_\Omega f\in C^\infty_\loc(\overline{\Omega}^c)$.
\end{proof}

\subsection{Proof of Proposition~\ref{Prop::Intro::AntiDevBddDom}}\label{Section::SecCor::AtdBddDom}

In this part we prove Proposition~\ref{Prop::Intro::AntiDevBddDom}.

Our goal  is to construct a class of operators $(\Sc_\Omega^{m,\alpha})_{|\alpha|\le m}:\Ss'(\R^n)\to\Ss'(\R^n)$ associated with a bounded Lipschitz domain $\Omega\subset\R^n$ and an $m\in\Z_+$, such that
\begin{itemize}
    \item $\Sc_\Omega^{m,\alpha}$ gain $m$-derivatives in Besov and Triebel-Lizorkin spaces.
    \item $\Sc_\Omega^{m,\alpha}g|_\Omega=0$ hold whenever $g|_\Omega=0$.
\end{itemize} 

As usual we begin the discussion with special Lipschitz domains: 

\begin{lemma}\label{Lem::SecCor::AntiDevSpeDom}
There are convolution operators\footnote{For a convolution operator $S$, we mean $Sf=\phi\ast  f$ for some $\phi\in\Ss'(\R^n)$. In particular it is linear.} $S_0,S_\alpha:\Ss'(\R^n)\to\Ss'(\R^n)$, $\alpha\in\N^n\backslash\{0\}$, such that:
\begin{enumerate}[(i)]
    \item\label{Item::SecCor::AntiDevSpeDom::Bdd} For each $\alpha\neq0$, $S_0,S_\alpha:\As_{pq}^s(\R^n)\to\As_{pq}^{s+|\alpha|}(\R^n)$ are bounded for $\As\in\{\Bs,\Fs\}$, for all $s\in\R$ and admissible $(p,q)$.
    \item\label{Item::SecCor::AntiDevSpeDom::Sum}  $ f=S_0f+\sum_{|\alpha|=m}\partial^\alpha S_\alpha  f$ holds for every $m\in\Z_+$ and $f\in\Ss'(\R^n)$.
    \item \label{Item::SecCor::AntiDevSpeDom::Vanish}Let $\omega\subset\R^n$ be a special Lipschitz domain. If $f\in\Ss'(\R^n)$ satisfies $ f|_\omega \equiv 0$, then $S_0f|_\omega=S_\alpha f|_\omega \equiv 0$ for all $\alpha\in\N^n\backslash\{0\}$.
\end{enumerate}
\end{lemma}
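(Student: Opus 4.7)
The plan is to realize $S_0$ and $S_\alpha$ as convolutions against tempered distributions supported in $\overline{-\Kb}$, assembled from the $\Kb$-pair $(\phi_j,\psi_j)_{j=0}^\infty$ of Convention \ref{Conv::Intro::KDyaPair} together with the anti-derivatives supplied by Theorem \ref{Thm::MainThm} \ref{Item::MainThm::AntiDev}. Concretely, I set $S_0 f := \psi_0\ast\phi_0\ast f$, a convolution by a Schwartz function supported in $\overline{-\Kb}$ (since $-\Kb$ is a convex cone). For each $\alpha\in\N^n\setminus\{0\}$, setting $m:=|\alpha|$ and applying Theorem \ref{Thm::MainThm} \ref{Item::MainThm::AntiDev} to $\phi=(\phi_j)_{j=1}^\infty$, I obtain families $\tilde\phi^\beta=(\tilde\phi^\beta_j)_{j=1}^\infty\in\Gf(-\Kb)$ (for $|\beta|=m$) satisfying $\phi_j = 2^{-jm}\sum_{|\beta|=m}\partial^\beta\tilde\phi^\beta_j$, and then define
\[
S_\alpha f := \sum_{j=1}^\infty 2^{-jm}\,\psi_j \ast \tilde\phi^\alpha_j \ast f,
\]
which is exactly the operator $\tilde T^{\psi,\tilde\phi^\alpha,-m}_{\R^n}$ of Proposition \ref{Prop::Prem::TildeTBdd} with the trivial set $\Omega=\R^n$. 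By Proposition \ref{Prop::Prem::TildeTBdd} \ref{Item::Prem::TildeTBdd::WellDef}, the defining series converges in $\Ss'(\R^n)$, so $S_\alpha$ is convolution by $\sigma_\alpha:=\sum_j 2^{-jm}\psi_j\ast\tilde\phi^\alpha_j\in\Ss'(\R^n)$.

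Part (i) will follow immediately from results already proved: Proposition \ref{Prop::Prem::TildeTBdd} \ref{Item::Prem::TildeTBdd} with $r=-m$ yields $S_\alpha:\As_{pq}^s(\R^n)\to\As_{pq}^{s+|\alpha|}(\R^n)$ for every $s\in\R$ and all admissible $(p,q)$, while Proposition \ref{Prop::Prem::BddE0} \ref{Item::Prem::BddE0::TildeF0} with $\Omega=\R^n$ gives arbitrary-order smoothing for $S_0$. Part (ii) is then a direct substitution using the resolution of identity $\sum_{j=0}^\infty\psi_j\ast\phi_j=\delta_0$:
\begin{align*}
S_0 f + \sum_{|\alpha|=m}\partial^\alpha S_\alpha f
&= \psi_0\ast\phi_0\ast f + \sum_{j=1}^\infty 2^{-jm}\psi_j\ast\Big(\sum_{|\alpha|=m}\partial^\alpha\tilde\phi^\alpha_j\Big)\ast f \\
&= \sum_{j=0}^\infty \psi_j\ast\phi_j\ast f = f.
\end{align*}

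For part (iii), suppose $f|_\omega=0$, so $\supp f\subseteq\omega^c$. The condition $\|\nabla\rho\|_{L^\infty}<1$ that defines a special Lipschitz domain yields (by dualizing the inclusion $\omega+\Kb\subseteq\omega$ of Remark \ref{Rmk::Intro::RmkSpeDom}) the inclusion $\omega^c+\overline{-\Kb}\subseteq\omega^c$. For each $N$, the partial sum $\sum_{j=1}^N 2^{-jm}\psi_j\ast\tilde\phi^\alpha_j\ast f$ is a convolution of $f$ with a Schwartz function supported in $\overline{-\Kb}$, hence is a tempered distribution supported in $\omega^c$; letting $N\to\infty$ in $\Ss'(\R^n)$ and pairing with an arbitrary $\varphi\in C_c^\infty(\omega)\subseteq\Ss(\R^n)$ shows that $S_\alpha f$ also vanishes on $\omega$. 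The same argument applied to the single Schwartz kernel $\psi_0\ast\phi_0$ yields $S_0 f|_\omega=0$. The only genuinely technical point, the convergence of $\sigma_\alpha$ as a tempered distribution, is already handled by Proposition \ref{Prop::Prem::TildeTBdd}; beyond that, the proof is a routine assembly of tools from Sections \ref{Section::Prelim} and \ref{Section::AntiDev}.
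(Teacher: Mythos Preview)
Your proof is correct and follows essentially the same route as the paper's. The only cosmetic difference is that the paper applies the anti-derivative decomposition of Theorem \ref{Thm::MainThm} \ref{Item::MainThm::AntiDev} to $\psi$ (setting $S_\alpha=\tilde T^{\tilde\eta^\alpha,\phi,-|\alpha|}_{\R^n}$ with $\psi_j=2^{-jm}\sum_{|\alpha|=m}\partial^\alpha\tilde\eta^\alpha_j$), whereas you apply it to $\phi$; since on $\Omega=\R^n$ there is no cutoff $\1_\Omega$ to obstruct moving derivatives across the convolution, the two choices are interchangeable.
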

\begin{proof}
Let $(\phi_j,\psi_j)_{j=0}^\infty$ be a $\Kb$-pair (see Convention~\ref{Conv::Intro::KDyaPair}).

By Theorem~\ref{Thm::MainThm} \ref{Item::MainThm::AntiDev} we can find $\tilde\eta^\alpha\in\Gf(-\Kb)$ for $\alpha\in\N^n\backslash\{0\}$ such that $\psi_j=2^{-jm}\sum_{|\alpha|=m}\partial^\alpha\tilde\eta^\alpha_j$ for each $j\in\Z_+$, $m\in\Z_+$. Therefore using notations in \eqref{Eqn::Prem::TildeTBdd::DefTildeT} and \eqref{Eqn::Prem::BddTildeE0} with $\Omega=\R^n$, we have for $f\in\Ss'(\R^n)$,
\begin{equation}\label{Eqn::SecCor::AntiDevSpeDom::PfDecomp}
    f=\sum_{j=0}^\infty\psi_j\ast \phi_j\ast f=\psi_0\ast\phi_0\ast f+\sum_{j=1}^\infty\sum_{|\alpha|=m}2^{-jm}\partial^\alpha\tilde\eta^\alpha_j\ast \phi_j\ast f=\tilde F^{\psi_0,\phi_0}_{\R^n}f+\sum_{|\alpha|=m}\partial^\alpha\tilde T^{\tilde\eta^\alpha,\phi,-m}_{\R^n}f.
\end{equation}
In this case $\tilde F^{\psi_0,\phi_0}_{\R^n}$ and $\tilde T^{\tilde\eta^\alpha,\phi,-m}_{\R^n}$ ($\alpha\neq0$) are all convolution operators.

Now we take $S_0$ and $S_\alpha$ as the following, which gives \ref{Item::SecCor::AntiDevBddDom::Sum} immediately by \eqref{Eqn::SecCor::AntiDevSpeDom::PfDecomp}:
\begin{equation*}
    S_0:=\tilde F^{\psi_0,\phi_0}_{\R^n},\quad S_\alpha:=\tilde T^{\tilde\eta^\alpha,\phi,-|\alpha|}_{\R^n},\quad \alpha\in\N^n\backslash\{0\}.
\end{equation*}

By Proposition~\ref{Prop::Prem::BddE0} \ref{Item::Prem::BddE0::TildeF0}, $\tilde F^{\psi_0,\phi_0}_{\R^n}:\Ss'(\R^n)\to\Ss'(\R^n)$ has boundedness $\tilde F^{\psi_0,\phi_0}_{\R^n}:\As_{pq}^s(\R^n)\to\As_{pq}^{s+m}(\R^n)$. By Proposition~\ref{Prop::Prem::TildeTBdd}, $\tilde T^{\tilde\eta^\alpha,\phi,-m}_{\R^n}:\Ss'(\R^n)\to\Ss'(\R^n)$ has boundedness $\tilde F^{\psi_0,\phi_0}_{\R^n}:\As_{pq}^s(\R^n)\to\As_{pq}^{s+m}(\R^n)$. Replacing $\tilde F^{\psi_0,\phi_0}_{\R^n},\tilde T^{\tilde\eta^\alpha,\phi,-m}_{\R^n}$ by $S_0,S_\alpha$ we obtain the well-definedness and the conclusion \ref{Item::SecCor::AntiDevSpeDom::Bdd}.

By assumption $\supp\phi_j,\supp\psi_j\subset-\Kb$ for $j\ge0$ and $\supp\tilde\eta^\alpha_j\subset-\Kb$ for $j\ge1$.
The assumption $ f|_\omega=0$ implies $\supp f\subseteq{\omega^c}$. And since ${\omega^c}-\Kb\subseteq{\omega^c}$, we get $\supp S_0 f,\supp S_\alpha f\subseteq{\omega^c}$, which gives \ref{Item::SecCor::AntiDevSpeDom::Vanish}.
\end{proof}
\begin{rem}
    Alternatively, there is a simpler way to define $S_0,S_\alpha$ using \eqref{Eqn::AntiDev::ProofCase>1::SigmaAlphaAstf}:
    \begin{equation*}
         \textstyle f=\phi_0\ast f+\sum_{|\alpha|=m}\partial^\alpha(\sigma_\alpha-\sigma_\alpha\ast\phi_0)\ast  f,\quad \forall m\in\Z_+.
    \end{equation*}
    
    One can see that $\|(\sigma_\alpha-\sigma_\alpha\ast\phi_0)\ast  f\|_{\As_{pq}^{s+|\alpha|}(\R^n)}\lesssim\| f\|_{\As_{pq}^s(\R^n)}$  for all such $p,q,s$, therefore $S_0:=\phi_0\ast(-)$ and $S_\alpha:=(\sigma_\alpha-\sigma_\alpha\ast\phi_0)\ast (-)$ also satisfy the conclusions of Lemma~\ref{Lem::SecCor::AntiDevSpeDom}. We omit the details.
\end{rem}
\begin{proof}[Proof of Proposition~\ref{Prop::Intro::AntiDevBddDom}]
Let $(\omega_\nu)_{\nu=1}^M$, $(\Phi_\nu)_{\nu=1}^M$ and $(\chi_\nu)_{\nu=0}^M$ be as in Notation~\ref{Note::PartitionUnity}. 

Define $\chi_\infty: = 1- \chi_0 - \sum_{\nu=1}^M \chi_\nu^2$. Let $\zeta_\infty\in C^\infty(\R^n)$ be such that 
\begin{itemize}
    \item $1-\zeta_\infty\in C_c^\infty(\R^n)$,\quad $\supp\zeta_\infty\subseteq\R^n\backslash\overline\Omega$,\quad and $\zeta_\infty\chi_\infty\equiv\chi_\infty$.
\end{itemize}

Let $S_0,S_\alpha:\Ss'(\R^n)$ ($\alpha\in\N^n\backslash\{0\}$) be the operators as in Lemma~\ref{Lem::SecCor::AntiDevSpeDom}. For every $m\in\Z_+$ and $f\in\Ss'(\R^n)$,
\begin{equation}\label{Eqn::SecCor::PfAtdBd::Sum1}
\begin{aligned}
    f=&\chi_0f+\zeta_\infty\chi_\infty f+\sum_{\nu=1}^M\chi_\nu^2f
    =\chi_0f+\zeta_\infty\chi_\infty f+\sum_{\nu=1}^M\chi_\nu((\chi_\nu f)\circ\Phi_\nu\circ\Phi_\nu^{-1})
    \\
    =&\textstyle S_0[\chi_0f]+\zeta_\infty\cdot S_0[\chi_\infty f]+\sum_{\nu=1}^M\chi_\nu\cdot( S_0[(\chi_\nu f)\circ\Phi_\nu]\circ\Phi_\nu^{-1})
    \\
    &\textstyle+\sum_{|\alpha|=m}\Big(\partial^\alpha S_\alpha(\chi_0f)+\zeta_\infty\cdot\partial^\alpha S_\alpha(\chi_\infty f)+\sum_{\nu=1}^M\chi_\nu\cdot((\partial^\alpha S_\alpha[(\chi_\nu f)\circ\Phi_\nu])\circ\Phi_\nu^{-1})\Big).
\end{aligned}
\end{equation}

Since $\Phi_\nu$ are all affine linear transformations, we can find constants $c^\alpha_{\nu,\beta}\in\R$ for $1\le\nu\le M$ and $\alpha,\beta\in\N^n$, such that
\begin{equation*}
    (\partial^\alpha g)\circ\Phi_\nu^{-1}=\sum_{\beta:|\beta|=|\alpha|}c^\alpha_{\nu,\beta}\cdot\partial^\beta(g\circ\Phi_\nu^{-1}),\quad\forall\alpha\in\N^n,\quad g\in\Ss'(\R^n).
\end{equation*}

Therefore for $|\alpha|=m$,
\begin{equation}\label{Eqn::SecCor::PfAtdBd::Sum2}
    \begin{aligned}
    &\partial^\alpha S_\alpha[\chi_0f]+\zeta_\infty\cdot\partial^\alpha S_\alpha[\chi_\infty f]+\sum_{\nu=1}^M\chi_\nu\cdot((\partial^\alpha S_\alpha[(\chi_\nu f)\circ\Phi_\nu])\circ\Phi_\nu^{-1})
    \\
    =&\partial^\alpha S_\alpha[\chi_0f]+\zeta_\infty\cdot\partial^\alpha S_\alpha[\chi_\infty f]
    +\sum_{|\beta|=m}\sum_{\nu=1}^Mc^\alpha_{\nu,\beta}\chi_\nu\cdot\partial^\beta( S_\alpha[(\chi_\nu f)\circ\Phi_\nu]\circ\Phi_\nu^{-1})
    \\
    =&\partial^\alpha S_\alpha[\chi_0f]+\sum_{\gamma\le\alpha}{\alpha\choose\gamma}(-1)^{|\alpha-\gamma|}\partial^\gamma\Big((\partial^{\alpha-\gamma}\zeta_\infty)\cdot S_\alpha[\chi_\infty f]\Big)
    \\
    &+\sum_{|\beta|=m}\sum_{\gamma\le\beta}{\beta\choose\gamma}(-1)^{|\beta-\gamma|}
    \partial^\gamma\sum_{\nu=1}^M(c^\alpha_{\nu,\beta}\partial^{\beta-\gamma}\chi_\nu)\cdot( S_\alpha[(\chi_\nu f)\circ\Phi_\nu]\circ\Phi_\nu^{-1}).
\end{aligned}
\end{equation}

We now define $\Sc^{m,\gamma}_\Omega$ for $|\gamma|\le m$ as the following:
\begin{equation}\label{Eqn::SecCor::PfAtdBd::DefFc1}
\begin{aligned}
    \Sc_\Omega^{m,0}f:=&\textstyle\widetilde\Sc^{m,0}_\Omega f+S_0[\chi_0f]+\zeta_\infty\cdot S_0[\chi_\infty f]
    +\sum_{\nu=1}^M\chi_\nu\cdot( S_0[(\chi_\nu f)\circ\Phi_\nu]\circ\Phi_\nu^{-1}),&\gamma=0;\\
    \Sc_\Omega^{m,\gamma}f:=&\widetilde\Sc^{m,\gamma}_\Omega f,&0<|\gamma|<m;\\
    \Sc_\Omega^{m,\gamma}f:=&\widetilde\Sc^{m,\gamma}_\Omega f+S_\gamma[\chi_0f],&|\gamma|=m;
\end{aligned}
\end{equation}
where 
\begin{equation*}
    \widetilde\Sc^{m,\gamma}_\Omega f:=\sum_{\substack{|\alpha|=m\\\alpha\ge\gamma}}{\alpha\choose\gamma}(-1)^{|\alpha-\gamma|}\bigg((\partial^{\alpha-\gamma}\zeta_\infty)\cdot S_\alpha[\chi_\infty f]
    \\+\sum_{\nu=1}^M \sum_{|\beta|=m}
   (c^\beta_{\nu,\alpha}\partial^{\alpha-\gamma}\chi_\nu) 
    \cdot( S_\beta[(\chi_\nu f)\circ\Phi_\nu]\circ\Phi_\nu^{-1})\bigg).
\end{equation*}

% for $\gamma=0$:
% \begin{equation}\label{Eqn::SecCor::PfAtdBd::DefFc1}
% \begin{aligned}
%     \Sc^{m,0}_\Omega f:=& S_0[\chi_0f]+\zeta_\infty\cdot S_0[\chi_\infty f]
%     +\sum_{\nu=1}^M\chi_\nu\cdot( S_0[(\chi_\nu f)\circ\Phi_\nu]\circ\Phi_\nu^{-1})
%     \\
%     &+\sum_{|\alpha|=m}\alpha!(-1)^m(\partial^\alpha\zeta_\infty) S_\alpha[\chi_\infty f]
%     +\sum_{\nu=1}^M\sum_{|\alpha|=m}\sum_{|\beta|=m}\beta!c^\alpha_{\nu,\beta}(\partial^{\beta}\chi_\nu)\cdot( S_\alpha[(\chi_\nu f)\circ\Phi_\nu]\circ\Phi_\nu^{-1}).
% \end{aligned}
% \end{equation}
% And for $\gamma\neq0$:
% \begin{equation}\label{Eqn::SecCor::PfAtdBd::DefFc2}
% \begin{aligned}
%     \Sc^{m,\gamma}_\Omega f:=&\sum_{|\alpha|=m;\alpha\ge\gamma}{\alpha\choose\gamma}(-1)^{|\alpha-\gamma|}\Big(\1_{\alpha:\alpha=\gamma}\cdot S_\alpha[\chi_0f]+(\partial^{\alpha-\gamma}\zeta_\infty)\cdot S_\alpha[\chi_\infty f]\Big)
%     \\
%     &+  \sum_{\nu=1}^M \sum_{|\alpha|=m}\sum_{|\beta|=m;\beta\ge\gamma}{\beta\choose\gamma}(-1)^{|\beta-\gamma|}
%   (c^\alpha_{\nu,\beta}\partial^{\beta-\gamma}\chi_\nu) 
%     \cdot( S_\alpha[(\chi_\nu f)\circ\Phi_\nu]\circ\Phi_\nu^{-1}).
% \end{aligned}
% \end{equation}

By \eqref{Eqn::SecCor::PfAtdBd::Sum1} and \eqref{Eqn::SecCor::PfAtdBd::Sum2} we get $f=\sum_{|\gamma|\le m}\partial^\gamma(\Sc^{m,\gamma}_\Omega f)$, which is \ref{Item::SecCor::AntiDevBddDom::Sum}.

By Lemma~\ref{Lem::SecCor::AntiDevSpeDom}, $ S_0, S_\alpha$ $(|\alpha|=m)$ are bounded $S_0,S_\alpha:\As_{pq}^s(\R^n)\to\As_{pq}^{s+m}(\R^n)$ for $\As\in\{\Bs,\Fs\}$, $s\in\R$, $m\in\Z_+$ and $(p,q)$ admissible, so the same hold for $\Sc^{m,\gamma}_\Omega$ for $|\gamma|\le m$. This finishes the proof of \ref{Item::SecCor::AntiDevBddDom::Bdd}.

\medskip
To prove \ref{Item::SecCor::AntiDevBddDom::Vanish}, suppose $f|_\Omega \equiv 0$, then we have $\chi_0f \equiv 0$. 
By \ref{Item::PartitionUnity::chi} and \ref{Item::PartitionUnity::Phi} in Notation~\ref{Note::PartitionUnity}, $\supp((\chi_\nu f)\circ\Phi_\nu)\subseteq\overline{\omega_\nu}^c$ for $1\le\nu\le M$.
By Lemma~\ref{Lem::SecCor::AntiDevSpeDom} \ref{Item::SecCor::AntiDevSpeDom::Vanish}, we have $\supp( S_\alpha((\chi_\nu f)\circ\Phi_\nu)))\subseteq\omega_\nu^c$ for all $\alpha\in\N^n$ and $1\le\nu\le M$, and so 
\begin{equation*}
    \supp\big((\partial^\beta\chi_\nu)\cdot( S_\alpha[(\chi_\nu f)\circ\Phi_\nu]\circ\Phi_\nu^{-1})\big)
    \subseteq  \supp\chi_\nu \cap \Phi_\nu(\omega_\nu^c) 
    \subseteq\Omega^c, \quad\text{for all $\alpha,\beta\in \N^n, 1 \le \nu \le M$.}
\end{equation*}

Note that $\supp((\partial^\beta\zeta_\infty)\cdot S_\alpha[\chi_\infty f])\subseteq\supp\zeta_\infty\subset \overline\Omega^c$ for all $\alpha,\beta\in\N^n$. Since every term in \eqref{Eqn::SecCor::PfAtdBd::DefFc1} that involves $\chi_0$ vanishes (as we assume $\supp f\subseteq\Omega^c$), we see that $\Sc^{m,\alpha}_\Omega f$ is supported in $\Omega^c$ for every $|\alpha|\le m$. In other words, $\Sc^{m,\alpha}_\Omega f|_\Omega=0$, finishing the proof.
\end{proof}
\begin{rem}
    Similar to Remark~\ref{Rmk::SecCor::FolkloreConstant}, the constant in Proposition~\ref{Prop::Intro::AntiDevBddDom} is (upper) stable under small Lipschitz perturbation of the domain. We leave the precise statement and the proof to the reader. 
\end{rem}

\addtocontents{toc}{\protect\setcounter{tocdepth}{1}}
\section{Further Remarks and Open Questions}\label{Section::Further}
Given an arbitrary extension operator $\Ec$ on a domain $\Omega$ not necessarily bounded Lipschitz, apart from considering the boundedness on certain function spaces, we can ask the following questions:
\begin{itemize}
    \item For $\alpha\in\N^n$, can we find some operator $\Ec^{\alpha,\beta}$ ($\beta\in\N^n$) that has similar properties as $\Ec$, such that $\partial^\alpha\circ \Ec=\sum_{\beta}\Ec^{\alpha,\beta}\circ\partial^\beta$?  
    \item For a nonsmooth function $f$ on the domain, does $\Ec f$ define a smooth function outside the domain? If yes, can we obtain some kinds of quantitative characterization of the smoothing effect?
\end{itemize}

Affirmative answers to these questions will allow us to prove the analogs of Theorems \ref{Thm::Intro::Folklore!} and \ref{Prop::Intro::SmoothBddDom} for the domain $\Omega$. 
In particular, it would be interesting to consider these questions on locally uniform domains.

\subsection*{Some boundedness of $\tilde T^{\eta,\theta,r}_\Omega$}

In this paper we do not mention the $\Fs_{\infty q}^s$-spaces for $0<q<\infty$. This is due to the fact that these spaces have rather complicated characterizations, as we cannot use $\|(\sum_{j\in\N}2^{jsq}|\lambda_j\ast f|^q)^\frac1q\|_{L^\infty}$ to define the $\Fs_{\infty q}^s$-norm. See \cite[Remark~2.84]{TriebelTheoryOfFunctionSpacesIV} for some discussion. Nevertheless the analogous results are done in \cite{YaoMorrey}.

% Nevertheless, when $q>1$ we still have the boundedness $\tilde T^{\eta,\theta,r}_\Omega:\Fs_{\infty q}^s(\R^n)\to\Fs_{\infty q}^{s-r}(\R^n)$. Indeed by \eqref{Eqn::Prem::PfWellDef::Duality}, we have $(\tilde T^{\eta,\theta,r}_\Omega)^*=\tilde T^{\breve\theta,\breve\eta,r}_\Omega$ where $\breve\theta_j(x)=\theta_j(-x)$ and $\breve\eta_j(x)=\eta_j(-x)$. Using the boundedness $\tilde T^{\breve\theta,\breve\eta,r}_\Omega:\Fs_{1q'}^{r-s}(\R^n)\to\Fs_{1q'}^{-s}(\R^n)$ and the duality $\Fs_{\infty q}^s(\R^n)=\Fs_{1q'}^{-s}(\R^n)'$, we get the $\Fs_{\infty q}^s$-boundedness of $\tilde T^{\eta,\theta,r}_\Omega$.
% Thus, one can see that the equivalent norm property  \eqref{Eqn::Intro::Folklore!::Fs} in Theorem~\ref{Thm::Intro::Folklore!} is still true for $\Fs_{\infty q}^s$-spaces if $q>1$.

% It is also possible to extend these results to the case where $q\le1$.

More generally, we can ask whether we have the boundedness of $\tilde T^{\eta,\theta,r}_\Omega$ and $T^{\eta,\theta,r}_\omega$ on other types of function spaces. For example, we can consider the anisotropic spaces (see \cite{JohnsenHansenSickelAnisotropicLocalMeans}), or the  variable-exponent spaces (see \cite{VariableLipschitz}). % , or the Triebel-Lizorkin-Morrey spaces (see \cite{LizorkinTriebelMorrey}). 

\subsection*{Other types of quantitative smoothing estimate}
In Theorem~\ref{Thm::MainThm} \ref{Item::MainThm::Smoothing}, we consider the estimates when the codomain of $T_\omega^{\eta,\theta,r}$ is the weighted Sobolev space  $W^{m,p}(\overline{\omega}^c,\dist_\omega^t)$. It is possible to use some other types of weighted function spaces on $\overline{\omega}^c$ that give an alternative characterizations of the smoothing effect of the extension operator. 

For example, take $(\phi_j)_{j=0}^\infty$ to be the family in \eqref{Eqn::Intro::FsNorm}. Denote $\breve\phi_j(x):=\breve\phi_j(-x)$, we can consider the space on $\overline{\omega}^c$ with norm 
\begin{equation*} 
    \|f\|=\|f\|_{s,p,q,\{\varphi_j\}}:=\Big(\sum_{j=1}^\infty2^{jqs}\|\breve\phi_j\ast f\|_{L^p(\overline\omega^c,\varphi_j)}^q\Big)^\frac1q,
\end{equation*}for a family of weights $\varphi_j:\overline{\omega}^c\to\R_+$ that are certain powers of the distance function $\dist_\omega$. 

One can ask whether Theorem~\ref{Thm::MainThm} \ref{Item::MainThm::Smoothing} still holds with $W^{m,p}(\overline{\omega}^c,\dist_\omega^t)$ replaced by the space with norm $\| \cdot  \|_{s,p,q,\{\varphi_j\}}$. 

\begin{ack} 
We would like to express our appreciation to Xianghong Gong, Brian Street and Andreas Seeger for many helpful discussions and advice. We would like to thank Hans Triebel and the referees for their interest and valuable comments. We would also like to thank Tomas Miguel Rodriguez for the discussion on Lipschitz defining functions.

\end{ack}

\bibliographystyle{amsalpha}
\bibliography{reference} 

\end{document}